\newcommand{\im}{\mathrm{i}}
\newcommand{\Tr}{\mathrm{Tr}}
\newcommand{\diag}{\mathrm{diag}}
\theoremstyle{plain}
\newtheorem{theorem}{Theorem}
\newtheorem{proposition}{Proposition}
\newtheorem{corollary}{Corollary}
\theoremstyle{definition}
\newtheorem{definition}{Definition}
\numberwithin{equation}{section}
\begin{document}


\baselineskip=17pt



\title[]{Elliptic solutions to nonsymmetric Monge-Amp\`{e}re type equations I. The $d$-concavity and the comparison principle}

\author[Ha Tien Ngoan]{Ha Tien Ngoan}
\address{Institute of Mathematics\\ Vietnam Academy of Science and Technology\\
18 Hoang Quoc Viet, 10307  Cau Giay, Hanoi, Vietnam.}
\email{htngoan@math.ac.vn}

\author[Thai Thi Kim Chung]{Thai Thi Kim Chung}
\address{University of Transport Technology\\ 54 Trieu Khuc, Thanh Xuan, Hanoi, Vietnam}
\email{chungttk@utt.edu.vn}

\date{}

\begin{abstract}
We introduce the so-called $d$-concavity, $d \geq 0,$ and prove that the nonsymmetric Monge-Amp\`{e}re type function of matrix variable is concave in an appropriate unbounded and convex set. We prove also the comparison principle for nonsymmetric Monge-Amp\`{e}re type equations in the case when they are so-called $\delta$-elliptic with respect to compared functions with $0 \leq \delta <1$. 
\end{abstract}

\subjclass[2010]{35J66}

\keywords{$d$-concavity, $\delta$-elliptic, the comparison principle}

\maketitle

\section{Introduction}
In this paper we consider the following nonsymmetric Monge-Amp\`ere type equations:
\begin{equation}\label{1}
\det \left [ D^2u -A(x,u,Du)-B(x,u,Du) \right ]=f(x,u,Du),\,\, x\in \Omega, \\
\end{equation}
where $\Omega$ is a bounded domain in $n$ dimensional Euclidean space $\mathbb{R}^n$ with smooth boundary, $Du$ and $D^2 u$ denote the gradient vector and the Hessian matrix of the second order derivatives of the function $u: \Omega \to \mathbb{R},$ respectively, $A$ is a given $n \times n$ symmetric matrix function defined on $\Omega \times \mathbb{R} \times \mathbb{R}^n$, $B$ is a given $n \times n$ skew-symmetric matrix function defined on $\Omega \times \mathbb{R} \times \mathbb{R}^n$, $f$ is a positive scalar valued function defined on $\Omega \times \mathbb{R} \times \mathbb{R}^n.$ As usual, we use $x,z,p,r$ to denote points in $\Omega, \mathbb{R}, \mathbb{R}^{ n }, \mathbb{R}^{n \times n},$ respectively.

\indent In the case that $B(x,z,p) \equiv 0$, equation \eqref{1} becomes
\begin{equation}\label{2}
\det [ D^2u -A(x,u,Du)]=f(x,u,Du),\,\,  x \in \Omega.
\end{equation}
For functions $u(x) \in C^2 (\Omega),$ we set 
\begin{equation}\label{3}
\omega (x,u) \equiv D^2 u(x)-A(x,u(x),Du(x)).
\end{equation}
We recall that the equation \eqref{1} or \eqref{2} is {\it elliptic with respect to function $u(x) \in C^2 (\Omega)$} whenever
\begin{equation*}
\lambda_{\min} (\omega(x,u))>0,\ \forall x \in \Omega.
\end{equation*}
Here and in what follows, we denote by $\lambda_{\min} (M)$ the smallest eigenvalue of a symmetric matrix $M \in \mathbb{R}^{n \times n}.$

For the Dirichlet problem for equation \eqref{2}, the existence of elliptic solutions was settled in \cite{4}, \cite{5}, \cite{6} by the method of continuity. In this method, the solvability of the Dirichlet problem is reduced to the establishment of $C^{2,\alpha}(\overline{\Omega})$ estimates for its elliptic solutions. It is well-known that the concavity of the following function
$$ F(\omega) = \log (\det \omega ),$$
considered as a function on the set of symmetric positive definite matrices $\omega = [\omega_{ij}]_{n \times n},$ has one of essential roles in establishing these a priori estimates.

\indent As it had been remarked in \cite{6}, the question on the solvability of Dirichlet problem for equations \eqref{1} when $B(x,z,p) \not \equiv 0$ is an open one. To investigate this problem, instead of the function $F(\omega ),$ we will consider in the below the following function of  matrix variable,
\begin{equation}\label{4}
F(R)= \log (\det R ),
\end{equation}
where $R =[R_{ij}] \in {\mathbb {R}}^{n \times n},$ which is represented by the form
\begin{equation*}
 R=\omega +\beta , \ \omega ^T=\omega , \omega > 0, \beta ^T= -\beta . 
\end{equation*}
We will show in the below that $\det \beta \geq 0$ and will prove that
$$ \det R = \det (\omega  + \beta ) \geq \det \omega + \det \beta >0.$$
Thus the matrix $R=\omega +\beta$ is always non-singular, the function $F(R)$ is well-defined and infinitely differentiable. The function $F(R)$ is called the {\it Monge-Amp\`{e}re type function,} associated to equation \eqref{1}.

Suppose that $ \delta, \mu $ are fixed nonnegative numbers, where $\delta \in [0,1).$ For the function $ F(R),$ we consider the following set of matrices:
\begin{equation}\label{5}
D_{\delta,\mu} \equiv \left \{ R  \mid  R = \omega + \beta , \omega^T = \omega, \beta^T = -\beta , \lambda_{\min}(\omega)>0,   \delta \lambda_{\min}(\omega) \geq \mu , \| \beta \| \leq \mu  \right \}.
\end{equation}
Here and in what follows, $\|\cdot \|$ denotes the operator norm on $\mathbb{R}^{n \times n}.$ It is easy to verify that $D_{\delta,\mu}$ is an unbounded and convex set in $\mathbb {R}^{n \times n}.$ If $\delta=0$ then $\mu=0,$ $\beta=0$ and the set $D_{0,0}$ consists of symmetric positive definite matrices. In order to generalise the notion of usual concavity for the function $\log (\det \omega ),$ we introduce the so-called $d$-concavity for the function $F(R).$

\begin{definition} \label {Def1}
Suppose that $d \geq 0$ is a nonnegative number. The function $F(R)$ is said to be {\it $d$-concave} in the set $D_{\delta,\mu}$ if for any matrices $R^{(0)}= \left [ R^{(0)}_{ij}\right ]_{n\times n}$ and $ R^{(1)} =  \left [ R^{(1)}_{ij}\right ]_{n\times n}$ from $D_{\delta,\mu},$ we have
\begin{equation*}
 F \bigl ( R^{(1)} \bigl ) - F \bigl ( R^{(0)} \bigl )  \leq  \sum _{i,j=1}^n \frac {\partial F \bigl ( R^{(0)}\bigl )}{\partial R_{ij}} \Bigl ( R^{(1)}_{ij} - R^{(0)}_{ij} \Bigl ) + d.
\end{equation*}
\end{definition}

\indent When $d=0,$ the $0$-concavity is indeed the usual concavity. One of our main results in this paper is the Theorem \ref{Theo3}, in which we prove that the function $F(R)$ is $d$-concave in the set $D_{\delta,\mu}$ with some $d \geq 0,$ which depends only on $\delta $ and $n.$ 

\indent Another aspect of our studying in this paper is the comparison principle for nonsymmetric Monge-Amp\`{e}re type equations \eqref{1}. It is well-known that when $B(x,z,p) \equiv 0,$ the comparison principle holds for elliptic solutions to the equation \eqref{2}. In \cite{2}, this principle had been considered for fully nonlinear second-order elliptic equations. However, in applying to the equation \eqref{1} to compare functions $u(x), v(x) \in C^2 (\overline{\Omega})$, the following condition needs to be satisfied: for any $t \in [0,1],$ the matrix $\omega (x,(1-t)u(x)+tv(x))$ must be positive definite for all $x \in \Omega.$ But, in general, the equation \eqref{1} do not satisfy this condition. The new point of this paper is that we can prove in the Theorem \ref{Theo4} the comparison principle to the equation \eqref{1} in the case when it is $\delta$-elliptic with respect to compared functions.

\begin{definition} \label {Def2} Suppose that $ \delta \in [0,1) $ is a fixed number. We say that the equation \eqref{1} is {\it $\delta-$elliptic with respect to function $u(x) \in C^2 (\Omega)$} if it is elliptic with respect to $u(x)$ and
\begin{equation*} 
 \delta \lambda_{\min} (\omega(x,u)) \geq \mu (B), \  \forall  x \in \Omega,
\end{equation*}
where $\omega(x,u)$ is defined by \eqref{3} and
\begin{equation} \label{6}
 \mu (B) \equiv \sup_{\Omega \times \mathbb{R} \times \mathbb{R}^n} \| B(x,z,p) \| ,
\end{equation}
which is assumed to be finite. 
\end{definition}

\indent Based on the two results mentioned above, in our incoming paper \cite{3}, we will get a priori estimates in $C^{2,\alpha}(\overline{\Omega})$ for $\delta-$elliptic solutions to the Dirichlet problem for \eqref{1}. Moreover, by the method of continuity we will prove in that paper that when $A(x,z,p), f(x,z,p)$ satisfy some conditions which are like those for the Dirichlet problem for \eqref{2} (\cite{4}, \cite{5}, \cite{6}), there exists a unique $\delta$-elliptic solution to the Dirichlet problem for \eqref{1} in the space $C^{2,\alpha } (\overline{\Omega})$ for some $0 < \alpha < 1,$ provided that the matrix $B(x,z,p)$ is sufficiently small.

\indent The structure of the paper is as follows. In Section \S 2, we recall the notion of the $2^{\rm nd}$ compound and its properties for square matrices. In Section \S 3, we will study the second differentials for the Monge-Amp\`{e}re type function $F(R)$ in the set $D_{\delta,\mu}$ and prove its $d$-concavity. In the last section, we will prove the comparison principle for the equations \eqref{1}, which are $\delta-$elliptic with respect to compared functions.

\section{ The $2^{\rm nd}$ compounds of square matrices }

\begin{definition}\rm{({\cite{1}})}\label {Def3}
Let $M=[M_{ij}]$ be an $n \times n$ matrix with entries in $\mathbb{R}$ or $\mathbb{C}$. Suppose that $i < k$ and $j < \ell .$ We denote by $M^{(2)}_{ik, j\ell}$ the minor, which is the determinant at the intersection of rows $i, k$ and columns $j, \ell$ of the matrix $M$, that is,
\begin{equation*}
M^{(2)}_{ik, j\ell} = 
\begin{vmatrix}
M_{ij} & M_{i\ell}\\
M_{kj} & M_{k\ell}
\end{vmatrix}.
\end{equation*}

\indent When the paires $(ik), (j\ell)$ with $i<k$ and $j < \ell $ are arranged in the lexical order, the resulting $\binom{n}{2}\times \binom{n}{2}$ matrix, consisting of corresponding minors is called the $2^{\rm nd}$ {\it compound} of the matrix $M$ and written as $M^{(2)}$. In symbols, we write
\begin{equation*}
M^{(2)}= \left [M^{(2)}_{ik, j\ell}\right ]_{\binom{n}{2}\times \binom{n}{2}}.
\end{equation*}

\end{definition}

\indent Some principal properties of the $2^{\rm nd}$ compound matrices are listed in the following proposition.

\begin{proposition} \rm{({\cite{1}})} \label{Pro1}
Let $M$ and $N$ be matrices in $\mathbb{C}^{n \times n}.$ Then we have the following assertions:\\
\indent {\rm (i)} Binet-Cauchy Theorem: 
\begin{equation*}
 (MN)^{(2)}= M^{(2)}N^{(2)}.
\end{equation*}
\indent {\rm (ii)} $ \bigl ( M^{(2)}\bigl )^T  ={\big (M^T \big )}^{(2)},$ where $M^T$ is the transpose of $M.$\\
\indent {\rm (iii)} $\overline { M^{(2)}}  =\overline {M}^{(2)},$ where $\overline {M}$ is the complex conjugate of $M.$ \\
\indent {\rm (iv)} ${\bigl( M^{(2)}\bigl )}^*  =\big ( M^*\big )^{(2)},$ where $M^*$ is the Hermitian adjoint of $M$, $M^*={\overline{M}}^T$.\\
\indent {\rm (v)} $M$ is non-singular if and only if $M^{(2)}$ is non-singular, and 
$$ \bigl (M^{(2)}\bigl )^{-1}= {\big (M^{-1} \big )}^{(2)}.$$
\indent {\rm (vi)} Suppose that $M \in \mathbb{R}^{n \times n}$ and $M$ is symmetric or skew-symmetric, then $M^{(2)}$ is symmetric.\\
\indent {\rm (vii)}  $ \big (kM\big )^{(2)} = k^2M^{(2)}, \forall k \in \mathbb C. $\\
\indent {\rm (viii)} If $M={\diag} (\lambda_1, \ldots , \lambda_n),$ then
\begin{equation*}
 M^{(2)} = {\diag} (\lambda_j\lambda_k, j<k ).
\end{equation*}
\end{proposition}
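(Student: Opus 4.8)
The plan is to realise the $2^{\rm nd}$ compound $M^{(2)}$ as the matrix of an induced linear map on the second exterior power, which reduces every item of the proposition to either a formal consequence or a one-line entrywise check. Concretely, identify $\mathbb{C}^n$ with a vector space $V$, equip $\Lambda^2 V$ with the basis $\{e_i\wedge e_k : 1\le i<k\le n\}$ in lexicographic order, and for $M\in\mathbb{C}^{n\times n}$ let $\Lambda^2 M$ be the linear map determined by $(\Lambda^2 M)(u\wedge v)=Mu\wedge Mv$. Writing $Me_j=\sum_p M_{pj}e_p$ and expanding $Me_j\wedge Me_\ell$ in this basis produces the coefficient $M_{pj}M_{q\ell}-M_{qj}M_{p\ell}=M^{(2)}_{pq,j\ell}$ on $e_p\wedge e_q$. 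So the first step is the lemma that the matrix of $\Lambda^2 M$ in the chosen basis is exactly the $M^{(2)}$ of Definition~\ref{Def3}.

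Granting that identification, (i) is just the functoriality $\Lambda^2(MN)=(\Lambda^2 M)(\Lambda^2 N)$ read off in matrix form; equivalently, it is the classical Cauchy--Binet formula applied to the $2\times 2$ minors of $MN$. Item (vii) is immediate, since $(\Lambda^2(kM))(u\wedge v)=(kM)u\wedge(kM)v=k^2(Mu\wedge Mv)$, or directly by pulling the scalar $k$ out of each of the two rows of every defining minor. Properties (ii) and (iii) I would read straight from the entrywise formula: the $((i,k),(j,\ell))$ entry of $(M^{(2)})^T$ equals $\det\!\begin{pmatrix}M_{ji}&M_{jk}\\ M_{\ell i}&M_{\ell k}\end{pmatrix}$, which by $\det X=\det X^T$ is the corresponding entry of $(M^T)^{(2)}$; and complex conjugation commutes with the polynomial expression for a $2\times 2$ determinant. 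Then (iv) follows by combining (ii) and (iii) through $M^*=\overline{M}^{\,T}$, and (vi) follows from (ii) together with (vii) applied to $k=-1$: if $M^T=\varepsilon M$ with $\varepsilon=\pm1$ then $(M^{(2)})^T=(M^T)^{(2)}=\varepsilon^2 M^{(2)}=M^{(2)}$.

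For (v) I would first note $I^{(2)}=I$: for $i<k$, $j<\ell$ the minor $\det\!\begin{pmatrix}\delta_{ij}&\delta_{i\ell}\\ \delta_{kj}&\delta_{k\ell}\end{pmatrix}$ equals $1$ when $(i,k)=(j,\ell)$ and $0$ otherwise. If $M$ is non-singular, applying (i) to $MM^{-1}=I$ gives $M^{(2)}(M^{-1})^{(2)}=I$, so $M^{(2)}$ is invertible with inverse $(M^{-1})^{(2)}$. For the converse, if $M$ is singular then $\operatorname{rank}M\le n-1$, so the image of $\Lambda^2 M$ is spanned by the vectors $Mu\wedge Mv$ and has dimension $\binom{\operatorname{rank}M}{2}\le\binom{n-1}{2}<\binom n2$, whence $M^{(2)}$ is singular; alternatively one may invoke the Sylvester--Franke identity $\det(M^{(2)})=(\det M)^{n-1}$. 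Finally (viii) is a direct evaluation from Definition~\ref{Def3}: for $M=\diag(\lambda_1,\dots,\lambda_n)$ the minor on rows $i<k$ and columns $j<\ell$ vanishes unless $i=j$ and $k=\ell$, in which case it equals $\lambda_i\lambda_k$.

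The only genuinely nontrivial ingredient is the Binet--Cauchy identity (i) — equivalently, the functoriality of $\Lambda^2$ — so that is where I expect the real work to lie; everything else is either a short entrywise computation or a formal consequence of (i), (ii), (iii), (vii). Accordingly I would organise the section around a careful proof of (i) (via the exterior-power lemma above, or by a direct appeal to the Cauchy--Binet formula) and then dispatch (ii)--(viii) as immediate corollaries.
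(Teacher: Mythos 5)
Your proposal is correct in all its steps: the identification of $M^{(2)}$ with the matrix of the induced map on $\Lambda^2\mathbb{C}^n$ in the lexicographically ordered basis $\{e_i\wedge e_k\}_{i<k}$ is exactly the coefficient computation you describe, and (i)--(viii) then follow as you say — (i) from functoriality (or classical Cauchy--Binet), (ii)--(iii) from entrywise checks using $\det X=\det X^T$ and conjugation, (iv) and (vi) as formal consequences of (ii), (iii), (vii), (v) from $I^{(2)}=I$ plus (i) in one direction and the rank bound $\binom{\operatorname{rank}M}{2}<\binom{n}{2}$ (or Sylvester--Franke) in the other, and (vii), (viii) by direct evaluation. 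Note, however, that the paper does not prove this proposition at all: it is stated as a known result with a citation to Aitken's book \cite{1}, so there is no argument in the text to compare against. Your write-up is therefore a self-contained substitute for the citation rather than an alternative to an existing proof; the exterior-power viewpoint is a standard and efficient way to package these facts, and the only place where genuine care is needed is, as you identify, the Binet--Cauchy identity (i), for which either the functoriality argument or a direct appeal to the Cauchy--Binet formula for $2\times 2$ minors is acceptable.
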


To investigate the $d$-concavity of function $F(R)$ in the next section, we need the following proposition. 
\begin{proposition}\label{Pro2}
Let $M=[M_{ij}]$ be a square matrix of order $n.$ Then 
\begin{equation}\label{7}
    M^{(2)}+{\left (M^T\right )}^{(2)} = \frac {1}{2}\left (M+M^T\right )^ {(2)} + \frac {1}{2}\left (M-M^T\right )^ {(2)}.
\end{equation}
\end{proposition}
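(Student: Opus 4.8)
The plan is to exploit that the assignment $M\mapsto M^{(2)}$ is \emph{quadratic} in the entries of $M$: by Definition \ref{Def3} the $(ik,j\ell)$-entry of $M^{(2)}$ is $M_{ij}M_{k\ell}-M_{i\ell}M_{kj}$, a homogeneous polynomial of degree two. Hence there is a symmetric bilinear map $T(\cdot,\cdot)$ on $\mathbb{R}^{n\times n}$ with values in $\mathbb{R}^{\binom{n}{2}\times\binom{n}{2}}$, given for $i<k$, $j<\ell$ (as in Definition \ref{Def3}) by
\[
T(P,Q)_{ik,j\ell}=\tfrac12\bigl(P_{ij}Q_{k\ell}+Q_{ij}P_{k\ell}-P_{i\ell}Q_{kj}-Q_{i\ell}P_{kj}\bigr),
\]
and one checks at once that $T(M,M)=M^{(2)}$ for every $M\in\mathbb{R}^{n\times n}$; in particular $(M^T)^{(2)}=T(M^T,M^T)$. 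Verifying this single identity $T(M,M)=M^{(2)}$ is the first step, and everything after it is formal.

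Next I would expand the two terms on the right-hand side of \eqref{7} by bilinearity of $T$. Since $T$ is symmetric,
\[
\tfrac12(M+M^T)^{(2)}=\tfrac12 T(M+M^T,M+M^T)=\tfrac12 T(M,M)+T(M,M^T)+\tfrac12 T(M^T,M^T),
\]
\[
\tfrac12(M-M^T)^{(2)}=\tfrac12 T(M-M^T,M-M^T)=\tfrac12 T(M,M)-T(M,M^T)+\tfrac12 T(M^T,M^T).
\]
Adding these, the cross terms $\pm T(M,M^T)$ cancel and the sum is $T(M,M)+T(M^T,M^T)=M^{(2)}+(M^T)^{(2)}$, which is exactly \eqref{7}.

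Equivalently, in notation closer to the rest of the paper, one may put $S=\tfrac12(M+M^T)$ (symmetric) and $K=\tfrac12(M-M^T)$ (skew-symmetric), so that $M=S+K$ and $M^T=S-K$; using property (vii) of Proposition \ref{Pro1} to write $(M+M^T)^{(2)}=(2S)^{(2)}=4S^{(2)}$ and $(M-M^T)^{(2)}=(2K)^{(2)}=4K^{(2)}$, identity \eqref{7} reduces to the parallelogram law $(S+K)^{(2)}+(S-K)^{(2)}=2S^{(2)}+2K^{(2)}$, which once more follows by expanding $(S\pm K)^{(2)}=T(S\pm K,S\pm K)$ and adding. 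I do not expect any real obstacle here; the only point needing a little care is the index bookkeeping defining $T$, but since $T$ uses the same lexicographic pairing of $(ik)$ and $(j\ell)$ and all matrices involved share the size $\binom{n}{2}\times\binom{n}{2}$, the entrywise identities hold verbatim.
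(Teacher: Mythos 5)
Your argument is correct, and it reaches \eqref{7} by a slightly different route than the paper. The paper's proof works entrywise: it expands the $2\times 2$ determinant $\left(M+M^T\right)^{(2)}_{ik,j\ell}$, regroups the four cross terms by hand, and recognizes the leftover block as $-\left(M-M^T\right)^{(2)}_{ik,j\ell}$, which gives the identity after rearranging. You instead make the quadratic structure explicit: since each entry $M^{(2)}_{ik,j\ell}=M_{ij}M_{k\ell}-M_{i\ell}M_{kj}$ is a quadratic form in $M$, you exhibit the associated symmetric bilinear map $T$ with $T(M,M)=M^{(2)}$ and then \eqref{7} is literally the parallelogram law
\[
T(M+M^T,M+M^T)+T(M-M^T,M-M^T)=2\,T(M,M)+2\,T(M^T,M^T),
\]
so the cross terms $\pm 2T(M,M^T)$ cancel formally rather than by an ad hoc regrouping. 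The underlying algebra is of course the same expansion, and your one genuine verification ($T(M,M)=M^{(2)}$, a one-line check from Definition \ref{Def3}) plus bilinearity is complete; your alternative phrasing via $S=\tfrac12(M+M^T)$, $K=\tfrac12(M-M^T)$ and property (vii) of Proposition \ref{Pro1} is also correct and equivalent. What your version buys is transparency and robustness (nothing has to be guessed in the regrouping, and the identity is seen to be a purely formal consequence of quadraticity of the second compound); what the paper's version buys is that it stays entirely within the entrywise determinant manipulations already set up in Section 2, with no auxiliary map to introduce.
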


\begin{proof}
For all $i,j,k,\ell = 1, \ldots, n$ such that $i<k, j<\ell$, we have
\begin{equation*}
\begin{split}
\left (M+M^T \right )^{(2)}_{ik,j\ell}&=
\begin{vmatrix}
M_{ij}+M_{ji} & M_{i\ell}+M_{\ell i}\\
M_{kj}+M_{jk} & M_{k \ell}+M_{\ell k}
\end{vmatrix}
\\
&= 2 \left (
\begin{vmatrix}
M_{ij} & M_{i\ell}\\
M_{kj} & M_{k\ell}
\end{vmatrix}
     +       \begin{vmatrix}
M_{ji} & M_{\ell i}\\
M_{jk} & M_{\ell k}
\end{vmatrix}
            \right )\\
&- \left ( 
  \begin{vmatrix}
M_{ij} & M_{i\ell}\\
M_{kj} & M_{k\ell}
\end{vmatrix}
+
 \begin{vmatrix}
M_{ji} & M_{\ell i}\\
M_{jk} & M_{\ell k}
\end{vmatrix}
-
 \begin{vmatrix}
M_{ij} & M_{\ell i}\\
M_{kj} & M_{\ell k}
\end{vmatrix}
-
 \begin{vmatrix}
M_{ji} & M_{i\ell}\\
M_{jk} & M_{k\ell}
\end{vmatrix}
 \right )\\
&=2 \left (
\begin{vmatrix}
M_{ij} & M_{i\ell}\\
M_{kj} & M_{k\ell}
\end{vmatrix}
     +       \begin{vmatrix}
M_{ji} & M_{\ell i}\\
M_{jk} & M_{\ell k}
\end{vmatrix}
\right )-
\begin{vmatrix}
M_{ij}-M_{ji} & M_{i\ell}-M_{\ell i}\\
M_{kj}-M_{jk} & M_{k\ell}-M_{\ell k}
\end{vmatrix}\\
&=2 \left ( M^{(2)}_{ik,j\ell}+ \left ( M^T \right )^{(2)}_{ik,j\ell} \right )- \left ( M-M^T \right )^{(2)}_{ik,j\ell}.
\end{split}
\end{equation*}
This implies the desired equality \eqref{7}.
\end{proof}

\section{The $d$-concavity of the nonsymmetric Monge-Amp\`{e}re type functions }

\subsection{ Some properties of matrices $R$ belonging the set $D_{\delta,\mu} $  }
Let $D_{\delta, \mu}$ is the set given in \eqref{5}. We shall introduce some properties of matrices $R=\omega+\beta$ from $D_{\delta, \mu}$.  

\begin{proposition}\label {Pro3} 
Suppose that $R=\omega +\beta \in {\mathbb {R}}^{n \times n},$ where $ \omega $ is symmetric positive definite, $\beta $ is skew-symmetric. Then \\
\indent {\rm (i)} $\det R \geq \det \omega  + \det \beta  \geq \det \omega > 0$.\\
\indent {\rm (ii)} Particularly, when $n=2,$
$$ \det R=\det \omega +\det \beta \geq \det \omega   >0.$$
Consequently, $\det R > 0$ and $R$ is always non-singular when $\omega > 0.$ 
\end{proposition}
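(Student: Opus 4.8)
The key identity to exploit is the multiplicativity of the determinant under the second compound together with the Laplace-type expansion that relates $\det(\omega+\beta)$ to traces of compound matrices. Concretely, for any $n\times n$ matrices the determinant of a sum admits the expansion $\det(\omega+\beta)=\sum_{S}\det(\omega|\beta)_S$ over subsets $S\subseteq\{1,\dots,n\}$, where $(\omega|\beta)_S$ takes columns of $\beta$ indexed by $S$ and columns of $\omega$ elsewhere. I would first show that, because $\beta$ is skew-symmetric, all the "mixed" terms with $1\le |S|\le n-1$ sum to something nonnegative when $\omega>0$, while the two pure terms are exactly $\det\omega$ and $\det\beta$. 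The cleanest route is to diagonalize $\omega$: write $\omega=P^TP$ with $P$ invertible (or $\omega^{1/2}$), so that $\det R=\det\omega\cdot\det\!\big(I+\omega^{-1/2}\beta\,\omega^{-1/2}\big)$ after a congruence; note $\widetilde\beta:=\omega^{-1/2}\beta\,\omega^{-1/2}$ is again skew-symmetric. Thus it suffices to prove $\det(I+\widetilde\beta)\ge 1+\det\widetilde\beta$ for skew-symmetric $\widetilde\beta$, and then undo the scaling using $\det\omega\cdot\det\widetilde\beta=\det(\omega^{-1})\det\beta\cdot\det\omega=\det\beta$... more carefully, $\det(\omega^{-1/2}\beta\,\omega^{-1/2})=\det\beta/\det\omega$, so $\det\omega\cdot\det\widetilde\beta=\det\beta$, giving (i) once the reduced inequality is established.

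For the reduced inequality, I would bring the real skew-symmetric matrix $\widetilde\beta$ to its canonical block form by an orthogonal congruence: $Q^T\widetilde\beta Q=\operatorname{diag}(J_{b_1},\dots,J_{b_m},0,\dots,0)$ with $J_b=\begin{pmatrix}0&b\\-b&0\end{pmatrix}$. This is an orthogonal similarity, hence preserves both $\det(I+\cdot)$ and $\det(\cdot)$. Then $\det(I+\widetilde\beta)=\prod_{k=1}^m(1+b_k^2)$ and $\det\widetilde\beta$ equals $\prod b_k^2$ if there are no zero blocks (i.e. $n$ even and $\widetilde\beta$ nonsingular) and equals $0$ otherwise. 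In either case the elementary inequality $\prod(1+b_k^2)\ge 1+\prod b_k^2\ge 1$ (expanding the product, all cross terms are nonnegative) finishes it, and shows $\det\widetilde\beta\ge 0$, hence $\det\beta\ge 0$. Translating back: $\det R=\det\omega\cdot\prod(1+b_k^2)\ge\det\omega\,(1+\prod b_k^2)=\det\omega+\det\beta\ge\det\omega>0$, which is exactly (i). Here I could alternatively invoke Proposition~\ref{Pro1}(viii) and (i) to handle the compound bookkeeping, but the orthogonal-canonical-form argument is self-contained.

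For part (ii), the case $n=2$ is immediate and sharper: writing $\omega=\begin{pmatrix}a&c\\c&d\end{pmatrix}$ and $\beta=\begin{pmatrix}0&b\\-b&0\end{pmatrix}$, a direct $2\times2$ determinant computation gives $\det R=(ad-c^2)+b^2=\det\omega+\det\beta$ with no cross terms at all, because the off-diagonal contributions $-(c+b)(c-b)=-(c^2-b^2)$ split exactly. Since $\omega>0$ forces $\det\omega>0$ and $\det\beta=b^2\ge0$, we get $\det R=\det\omega+\det\beta\ge\det\omega>0$, so $R$ is nonsingular. I would state this as a one-line computation.

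**Main obstacle.** The only real content is the reduction to skew-symmetric canonical form and the observation that the congruence $\omega\mapsto I$, $\beta\mapsto\omega^{-1/2}\beta\omega^{-1/2}$ preserves skew-symmetry while converting the determinant inequality into the product inequality $\prod(1+b_k^2)\ge 1+\prod b_k^2$. Everything after that is elementary; the potential pitfall is merely being careful that the normalizing congruence is by $\omega^{-1/2}$ (symmetric), not an arbitrary factorization, so that skew-symmetry of $\beta$ is genuinely inherited by $\widetilde\beta$, and that the subsequent reduction of $\widetilde\beta$ is by an \emph{orthogonal} matrix so that $\det(I+\widetilde\beta)$ is unchanged.
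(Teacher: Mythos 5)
Your proposal is correct and follows essentially the same route as the paper: factor $R=\omega^{1/2}(E+\sigma)\omega^{1/2}$ with $\sigma=\omega^{-1/2}\beta\,\omega^{-1/2}$ skew-symmetric, bring $\sigma$ to canonical form (you use the real orthogonal block form, the paper the equivalent unitary diagonalization with paired imaginary eigenvalues), and conclude via $\prod(1+b_k^2)\ge 1+\prod b_k^2$ together with $\det\omega\cdot\det\sigma=\det\beta\ge 0$. The $n=2$ case by direct computation is a harmless simplification of the paper's argument.
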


\begin{proof}
(i) Set
\begin{equation}\label{8}
 \sigma = \omega ^{-\frac {1}{2}} \beta \omega ^{-\frac {1}{2}}.
\end{equation}
Then $\sigma$ is skew-symmetric and
\begin{equation}\label{9}
 R=\omega +\beta = \omega ^{\frac {1}{2}}\left ( E+\omega ^{-\frac {1}{2}} \beta \omega ^{-\frac {1}{2}} \right ) \omega ^{\frac {1}{2}} = \omega ^{\frac {1}{2}}(E+\sigma ) \omega ^{\frac {1}{2}}.
\end{equation}
Set
\begin{equation}\label {10}
D_1 = {\rm diag}\, ( {\im} \sigma_1, \ldots , {\im} \sigma_n ),
\end{equation}
where ${\im} \sigma_1, \ldots , {\im} \sigma_n$ are the eigenvalues of $\sigma,$ ${\im}$ is the imaginary unit, $\sigma_j \in \mathbb{R}, j=1, \ldots, n$ and
\begin{equation}\label{11}
 \sigma_{2j-1} = -\sigma_{2j}, j=1,2, \ldots, \left [ \frac {n}{2} \right ] \ \text{and} \ \sigma_{n}=0  \  \text{if}\  n \ \text{is odd}.
\end{equation}
Then we can write for some unitary matrix $C_1 \in \mathbb{C}^{ n \times n},$
\begin{equation}\label{12}
  \sigma = C_{1}D_{1}C_{1}^{*},
\end{equation}
where $C_1^*$ is the Hermitian adjoint of $C_1$, $C_1^*=C_1^{-1}$. It follows from \eqref{9} and \eqref{12} that
\begin{equation*}
R = \omega ^{\frac {1}{2}}C_{1}(E+D_{1})C_{1}^{*}\omega ^{\frac {1}{2}},
\end{equation*}
which together with \eqref{10} and \eqref{11} yields
\begin{equation} \label{13}
 \det R  =  (\det \omega ) \left ( 1+\sigma_1^2 \right ) \left ( 1+\sigma_3^2 \right )  \cdots \left (1+\sigma_{2 \left [\frac {n}{2}\right ]-1}^2 \right ) .
\end{equation}
Also from \eqref{11}, we have
\begin{equation*}
\det \sigma =0 \ \text{if} \ n \ \text{is odd}, \ \    \det \sigma = \sigma_1^2 \sigma_3^2 \cdots  \sigma_{n-1}^2 \ \text{if} \ n \ \text{is even}. 
\end{equation*}
It follows that
\begin{equation}\label{14}
0 \leq \det \sigma  \leq  \sigma_1^2\sigma_3^2 \cdots  \sigma_{2 \left [\frac {n}{2}\right ]-1}^2.
\end{equation}
This together with \eqref{8} gives
\begin{equation}\label{15}
\det \beta= (\det \omega) (\det \sigma) \geq 0.
\end{equation}
Combining \eqref{13}-\eqref{15}, we obtain 
$$ \det R \geq (\det \omega )( 1+\det \sigma) =\det \omega +\det \beta \geq \det  \omega >0. $$

(ii) When $n=2,$ $\det \sigma=\sigma_1^2$. We infer from this, \eqref{13} and \eqref{15} that
$$ \det R=(\det \omega) (1+\sigma_1^2)= \det \omega + (\det \omega) (\det \sigma)= \det \omega+\det \beta \geq \det \omega>0. $$
The proof is completed.
\end{proof}

\begin{proposition}\label {Pro4} 
Suppose that $R=\omega +\beta \in D_{\delta, \mu}$ and the matrix $\sigma$ is given in \eqref{8}. Then the following assertions hold:\\
\indent {\rm{(i)}} $\| \sigma \| \leq \delta < 1.$\\
\indent {\rm{(ii)}} All eigenvalues ${\im}\sigma_j$ of $\sigma$ satisfy: $|\sigma_j|  \leq \delta <1, j=1,\ldots , n .$
\end{proposition}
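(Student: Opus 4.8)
The plan is to deduce both assertions from elementary properties of the operator norm $\|\cdot\|$.

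For (i), I would first observe that since $\omega$ is symmetric positive definite, so is $\omega^{-1/2}$, and by the spectral theorem (diagonalising $\omega$ by an orthogonal matrix) one has $\|\omega^{-1/2}\| = 1/\sqrt{\lambda_{\min}(\omega)}$. Applying the submultiplicativity of $\|\cdot\|$ to the definition \eqref{8} of $\sigma$ then gives
\[
\|\sigma\| \;=\; \bigl\| \omega^{-1/2}\beta\,\omega^{-1/2} \bigr\| \;\le\; \|\omega^{-1/2}\|\,\|\beta\|\,\|\omega^{-1/2}\| \;=\; \frac{\|\beta\|}{\lambda_{\min}(\omega)}.
\]
By the defining conditions of $D_{\delta,\mu}$ in \eqref{5} we have $\|\beta\|\le\mu$ and $\mu\le\delta\,\lambda_{\min}(\omega)$, whence $\|\sigma\|\le \mu/\lambda_{\min}(\omega)\le\delta$, and $\delta<1$ by hypothesis. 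This proves (i).

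For (ii), I would use the standard fact that the modulus of any eigenvalue of a matrix is bounded by its operator norm. As established in the proof of Proposition~\ref{Pro3} (see \eqref{10}--\eqref{11}), the real skew-symmetric matrix $\sigma$ has eigenvalues $\im\sigma_1,\dots,\im\sigma_n$ with $\sigma_j\in\mathbb{R}$; hence $|\sigma_j| = |\im\sigma_j| \le \|\sigma\| \le \delta < 1$ for every $j=1,\dots,n$, by (i). (In fact, since a real skew-symmetric matrix is normal, $\|\sigma\|$ equals its spectral radius $\max_j|\sigma_j|$, so the two bounds coincide, though only the inequality is needed.)

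I do not expect a genuine obstacle here; the only steps deserving a word of justification are the identity $\|\omega^{-1/2}\| = 1/\sqrt{\lambda_{\min}(\omega)}$, which follows from the spectral decomposition of the symmetric positive definite matrix $\omega$, and the purely imaginary nature of the spectrum of $\sigma$, which was already recorded in Proposition~\ref{Pro3}.
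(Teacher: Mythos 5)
Your proposal is correct and follows essentially the same route as the paper: the paper also bounds $\|\sigma\| \le \|\omega^{-1/2}\|^2\|\beta\| \le \mu/\lambda_{\min}(\omega) \le \delta$ using the defining inequalities of $D_{\delta,\mu}$, and then deduces (ii) from $|\sigma_j| \le \|\sigma\|$. The only difference is that you spell out the identity $\|\omega^{-1/2}\| = 1/\sqrt{\lambda_{\min}(\omega)}$, which the paper uses implicitly.
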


\begin{proof}
(i) Since $R=\omega +\beta \in D_{\delta, \mu},$ we have $\delta \lambda_{\min}(\omega) \geq \mu$ and $ \| \beta \| \leq \mu.$ From these estimates and \eqref{8}, we obtain
\begin{equation*}
  \| \sigma \|  \leq \bigl \| \omega ^{-\frac {1}{2}} \bigl \|^2  \| \beta \|  \leq \frac {1}{\lambda_{\min}(\omega)}\mu \leq \delta <1. 
\end{equation*}

(ii) The estimate (ii) follows directly from (i) and the fact that $|\sigma_j | \leq \| \sigma \| , j=1, \ldots ,n.$ 
\end{proof}

\begin{proposition}\label{Pro5}
Suppose that $R=\omega +\beta \in D_{\delta, \mu}$. Then
\begin{equation}\label{16}
\frac {1}{\delta ^n}\|\beta \|^n + \bigl (2 ^{[\frac {n}{2}]}-1\bigl ) \det \beta \leq \det \omega+  \bigl (2 ^{[\frac {n}{2}]}-1\bigl ) \det \beta      \leq \det R \leq (1+\delta^2) ^{[\frac {n}{2}]} \det \omega,
\end{equation}
where, when $\delta=0$ we have $\beta=0$ and $\dfrac{0}{0}=0.$
\end{proposition}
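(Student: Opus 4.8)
The plan is to build directly on the diagonalisation of $\sigma=\omega^{-\frac12}\beta\omega^{-\frac12}$ already carried out in the proof of Proposition~\ref{Pro3}. From \eqref{13} and \eqref{15} one has the two identities
\begin{equation*}
\det R=(\det\omega)\prod_{k=1}^{[\frac{n}{2}]}\bigl(1+\sigma_{2k-1}^2\bigr),\qquad
\det\beta=(\det\omega)(\det\sigma),
\end{equation*}
from \eqref{14} the bound $0\le\det\sigma\le\sigma_1^2\sigma_3^2\cdots\sigma_{2[\frac{n}{2}]-1}^2$, and from Proposition~\ref{Pro4}(ii) that $\sigma_{2k-1}^2\le\delta^2<1$ for every $k$. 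Writing $t_k:=\sigma_{2k-1}^2\in[0,\delta^2]$ for $k=1,\ldots,[\frac{n}{2}]$, these are all the ingredients I expect to need; the degenerate case $\delta=0$ (where $\mu=0$, $\beta=0$, $\det\sigma=0$, and by the stated convention the leftmost term is $0$) reduces \eqref{16} to $\det\omega\le\det\omega$, so I would assume $\delta>0$ below.

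First I would dispatch the rightmost inequality of \eqref{16} by bounding each factor as $1+t_k\le1+\delta^2$, which immediately gives $\det R\le(1+\delta^2)^{[\frac{n}{2}]}\det\omega$.

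Next, for the middle inequality $\det\omega+\bigl(2^{[\frac{n}{2}]}-1\bigr)\det\beta\le\det R$, I would expand the product into its elementary symmetric form,
\begin{equation*}
\prod_{k=1}^{[\frac{n}{2}]}(1+t_k)-1=\sum_{\emptyset\neq S\subseteq\{1,\ldots,[\frac{n}{2}]\}}\ \prod_{k\in S}t_k,
\end{equation*}
and observe that, since each $t_k$ lies in $[0,1)$, deleting factors can only enlarge a product, so each of the $2^{[\frac{n}{2}]}-1$ summands is at least $\prod_{k=1}^{[\frac{n}{2}]}t_k\ge\det\sigma$. Hence $\prod_{k}(1+t_k)-1\ge\bigl(2^{[\frac{n}{2}]}-1\bigr)\det\sigma$, and multiplying by $\det\omega>0$ yields exactly $\det R-\det\omega\ge\bigl(2^{[\frac{n}{2}]}-1\bigr)\det\beta$.

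Finally, the leftmost inequality reduces, after cancelling $\bigl(2^{[\frac{n}{2}]}-1\bigr)\det\beta$ from both sides, to the claim $\frac{1}{\delta^n}\|\beta\|^n\le\det\omega$. Since $R\in D_{\delta,\mu}$, the definition \eqref{5} forces $\|\beta\|\le\mu\le\delta\,\lambda_{\min}(\omega)$, whence $\|\beta\|^n\le\delta^n\lambda_{\min}(\omega)^n\le\delta^n\det\omega$, the last step because every eigenvalue of $\omega$ is at least $\lambda_{\min}(\omega)>0$. I expect the only step needing a little care to be the combinatorial one for the middle inequality — counting the $2^{[\frac{n}{2}]}-1$ nonempty subsets and exploiting monotonicity of subset products under the constraint $t_k<1$; the remaining three estimates are each one line.
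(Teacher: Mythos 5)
Your proposal is correct and follows essentially the same route as the paper: the identity \eqref{13}, the bounds \eqref{14}--\eqref{15}, Proposition~\ref{Pro4}, and the chain $\det\omega\ge(\lambda_{\min}(\omega))^n\ge\frac{1}{\delta^n}\mu^n\ge\frac{1}{\delta^n}\|\beta\|^n$. The only difference is that you spell out the subset-expansion argument behind $\prod_k(1+t_k)\ge1+\bigl(2^{[\frac{n}{2}]}-1\bigr)\prod_k t_k$, which the paper states without justification.
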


\begin{proof}
By \eqref{13},
$$ \det R  =  (\det \omega ) \left ( 1+\sigma_1^2 \right ) \left ( 1+\sigma_3^2 \right )  \cdots \left (1+\sigma_{2 \left [\frac {n}{2}\right ]-1}^2 \right ) .$$
By Proposition \ref{Pro4}, $|\sigma_j|  \leq \delta <1, j=1,\ldots , n.$ Thus
\begin{equation*}
\begin{split}
 (1+\delta^2 )^{ [\frac {n}{2}]}& \geq  \left ( 1+\sigma_1^2 \right ) \left ( 1+\sigma_3^2 \right )     \cdots \Bigl (1+\sigma_{2 \left [\frac {n}{2}\right ]-1}^2 \Bigl )\\
&\geq 1 + \bigl (2^{ [\frac {n}{2}]}-1 \bigl ) \sigma_1^2\sigma_3^2 \cdots  \sigma_{2 \left [\frac {n}{2}\right ]-1}^2 \geq 1+ \bigl (2^{ [\frac {n}{2}]}-1 \bigl )  \det \sigma ,
\end{split}
\end{equation*}
where the last inequality is by \eqref{14}. Moreover, we have
\begin{equation*}
\det \omega \geq \left (\lambda_{\min}(\omega)\right )^n \geq \frac {1}{\delta ^n} \mu ^n \geq \frac {1}{\delta ^n} \|\beta \|^n .
\end{equation*}
From these estimates and \eqref{15}, we obtain the conclusion of Proposition \ref{Pro5}.
\end{proof}

\begin{proposition}\label {Pro6}
Suppose that $R=\omega +\beta \in D_{\delta, \mu}$ and the matrix $\sigma $ is given in \eqref{8}. Then
\begin{equation}\label{17}
\begin{split}
\frac {{ R^{-1}+  (R^{-1})}^T }{2} &= \omega ^{-\frac {1}{2}} \left ( E - \sigma^2  \right )^{-1}   \omega ^{-\frac {1}{2}},  \\
\frac { { R^{-1}- (R^{-1})}^T }{2} &= \omega ^{-\frac {1}{2}}(-\sigma) \left ( E - \sigma^2  \right )^{-1}   \omega ^{-\frac {1}{2}}.
\end{split}
\end{equation}
\end{proposition}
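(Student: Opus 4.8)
The plan is to start from the factorization \eqref{9}, $R=\omega^{\frac12}(E+\sigma)\omega^{\frac12}$, and to invert it. First I would note that the relevant matrices are nonsingular: by Proposition \ref{Pro4}(i) we have $\|\sigma\|\le\delta<1$, so $E+\sigma$ and $E-\sigma$ are invertible; moreover $\sigma^2$ is symmetric with eigenvalues $-\sigma_j^2\le 0$, hence $E-\sigma^2$ has eigenvalues $1+\sigma_j^2\ge 1$ and is invertible as well. From \eqref{9} one gets
\[
R^{-1}=\omega^{-\frac12}(E+\sigma)^{-1}\omega^{-\frac12}.
\]

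Next I would transpose this identity. Since $\omega^{-\frac12}$ is symmetric and $\sigma^T=-\sigma$, we have $(E+\sigma)^T=E-\sigma$, so
\[
\bigl(R^{-1}\bigr)^T=\omega^{-\frac12}(E-\sigma)^{-1}\omega^{-\frac12}.
\]
Adding and subtracting the two displays and factoring $\omega^{-\frac12}$ out on both sides reduces \eqref{17} to the two matrix identities
\[
\tfrac12\bigl((E+\sigma)^{-1}+(E-\sigma)^{-1}\bigr)=(E-\sigma^2)^{-1},\qquad
\tfrac12\bigl((E+\sigma)^{-1}-(E-\sigma)^{-1}\bigr)=-\sigma(E-\sigma^2)^{-1}.
\]

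Finally, to verify these I would use that $E+\sigma$ and $E-\sigma$ commute with product $(E+\sigma)(E-\sigma)=E-\sigma^2$, whence $(E+\sigma)^{-1}(E-\sigma)^{-1}=(E-\sigma^2)^{-1}$; multiplying $(E+\sigma)^{-1}\pm(E-\sigma)^{-1}$ through by $(E-\sigma)(E+\sigma)=E-\sigma^2$ gives $(E+\sigma)^{-1}+(E-\sigma)^{-1}=\bigl[(E-\sigma)+(E+\sigma)\bigr](E-\sigma^2)^{-1}=2(E-\sigma^2)^{-1}$ and $(E+\sigma)^{-1}-(E-\sigma)^{-1}=\bigl[(E-\sigma)-(E+\sigma)\bigr](E-\sigma^2)^{-1}=-2\sigma(E-\sigma^2)^{-1}$. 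Substituting back yields \eqref{17}. There is no genuine obstacle here: the only points needing care are the invertibility of $E\pm\sigma$ (supplied by Proposition \ref{Pro4}) and the observation that every matrix in sight is a polynomial in $\sigma$, so all these algebraic manipulations are legitimate.
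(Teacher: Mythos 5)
Your proposal is correct and follows essentially the same route as the paper: invert the factorization $R=\omega^{\frac12}(E+\sigma)\omega^{\frac12}$, transpose using $\sigma^T=-\sigma$, and verify the two resolvent identities by multiplying through by $E-\sigma^2=(E-\sigma)(E+\sigma)$. The only difference is that you make the invertibility of $E\pm\sigma$ and $E-\sigma^2$ explicit via Proposition \ref{Pro4}, which the paper leaves implicit.
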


\begin{proof}
It follows from \eqref{9} that
$$R^{-1} = \omega ^{-\frac {1}{2}} \left ( E + \sigma  \right )^{-1}   \omega ^{-\frac {1}{2}},$$
 $$\big ( R^{-1}\big )^T= \omega ^{-\frac {1}{2}} \big ( (E + \sigma )^{-1}\big )^T \omega ^{-\frac {1}{2}}  =\omega ^{-\frac {1}{2}} \left ( E - \sigma  \right )^{-1}   \omega ^{-\frac {1}{2}}  .$$
Thus
\begin{equation}\label{18}
\begin{split}
\frac {R^{-1}+{ \left ( R^{-1}\right )}^T}{2} &= \omega ^{-\frac {1}{2}} \frac {\left ( E + \sigma  \right )^{-1}+\left ( E - \sigma  \right )^{-1}}{2}  \omega ^{-\frac {1}{2}},  \\
\frac {R^{-1}-{ \left ( R^{-1}\right )}^T}{2} &= \omega ^{-\frac {1}{2}} \frac {\left ( E + \sigma  \right )^{-1}-\left ( E - \sigma  \right )^{-1}}{2}  \omega ^{-\frac {1}{2}}. 
\end{split}
\end{equation}
Note that $E-\sigma^2= (E-\sigma)(E+\sigma),$ so we have
\begin{align*}
\frac {\left ( E + \sigma  \right )^{-1}+\left ( E - \sigma  \right )^{-1}}{2} \big ( E-\sigma^2 \big )&=\frac{(E-\sigma) + (E+\sigma)}{2}=E,\\
\frac {\left ( E + \sigma  \right )^{-1}-\left ( E - \sigma  \right )^{-1}}{2} \big ( E-\sigma^2 \big )&=\frac{(E-\sigma) - (E+\sigma)}{2}=-\sigma.
\end{align*}
Therefore,
\begin{equation*}
\begin{split}
\frac {\left ( E + \sigma  \right )^{-1}+\left ( E - \sigma  \right )^{-1}}{2} &= \left ( E - \sigma ^2 \right )^{-1},  \\
\frac {\left ( E + \sigma  \right )^{-1}-\left ( E - \sigma  \right )^{-1}}{2} &= (-\sigma) \left ( E - \sigma ^2 \right )^{-1}. 
\end{split}
\end{equation*}
From these equalities and \eqref{18}, we obtain the desired equalities in \eqref{17}.
\end{proof}

\begin{corollary}\label {Cor1}
Suppose that $R=\omega +\beta \in D_{\delta, \mu}$ and suppose that the matrix $\sigma = \omega ^{-\frac {1}{2}} \beta \omega ^{-\frac {1}{2}}$ is diagonalised by a unitary matrix $C_{1} \in \mathbb{C}^{n \times n}$ as in \eqref{12},
\begin{equation*}
  \sigma = C_{1}D_{1}C_{1}^{*},
\end{equation*}
where $D_1$ is the diagonal matrix given by \eqref{10}.

Then 
\begin{equation}\label{19}
\begin{split}
\frac {R^{-1}+{ \left ( R^{-1}\right )}^T}{2} &= \omega ^{-\frac {1}{2}} C_1 D_2 C_1^{*}  \omega ^{-\frac {1}{2}},  \\
\frac {R^{-1}-{ \left ( R^{-1}\right )}^T}{2} &= \omega ^{-\frac {1}{2}} C_1 D_3 C_1^{*}  \omega ^{-\frac {1}{2}},
\end{split}
\end{equation}
where
\begin{equation}\label{20}
\begin{split}
D_{2} &= \left (E-D_{1}^2 \right )^{-1} = {\diag}\left (\frac {1}{1+\sigma_1^2}, \ldots , \frac {1}{1+\sigma_n^2} \right ),\\
D_{3} &= (-D_{1})\left (E-D_{1}^2 \right )^{-1} = {\diag} \left (\frac {-{\im}\sigma_1}{1+\sigma_1^2},  \ldots , \frac {- {\im}\sigma_n}{1+\sigma_n^2} \right ).
\end{split}
\end{equation}
\end{corollary}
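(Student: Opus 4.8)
The plan is to combine the coordinate-free identities of Proposition \ref{Pro6} with the spectral decomposition $\sigma = C_1 D_1 C_1^*$. First I would record that, since $C_1$ is unitary, $C_1^* C_1 = E$, so that for any rational expression $g(\sigma)$ that makes sense we have $g(\sigma) = C_1\, g(D_1)\, C_1^*$; in particular $\sigma^2 = C_1 D_1^2 C_1^*$ and hence
$$ E - \sigma^2 = C_1\big(E - D_1^2\big)C_1^*. $$
Because $D_1 = \diag(\im\sigma_1,\dots,\im\sigma_n)$ with $\sigma_j \in \mathbb{R}$, we get $D_1^2 = \diag(-\sigma_1^2,\dots,-\sigma_n^2)$, so $E - D_1^2 = \diag(1+\sigma_1^2,\dots,1+\sigma_n^2)$ is (trivially) invertible, its diagonal entries being $\geq 1 > 0$, and therefore $E-\sigma^2$ is invertible with
$$ \big(E-\sigma^2\big)^{-1} = C_1\big(E-D_1^2\big)^{-1} C_1^* = C_1 D_2 C_1^*, \qquad D_2 = \diag\Big(\tfrac{1}{1+\sigma_1^2},\dots,\tfrac{1}{1+\sigma_n^2}\Big). $$

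Substituting this into the first line of \eqref{17} gives at once
$$ \frac{R^{-1}+(R^{-1})^T}{2} = \omega^{-\frac12}(E-\sigma^2)^{-1}\omega^{-\frac12} = \omega^{-\frac12} C_1 D_2 C_1^* \omega^{-\frac12}, $$
which is the first identity in \eqref{19}. For the second, I would write $(-\sigma)(E-\sigma^2)^{-1} = C_1(-D_1)C_1^* \cdot C_1(E-D_1^2)^{-1}C_1^* = C_1 (-D_1)(E-D_1^2)^{-1} C_1^*$, using $C_1^*C_1 = E$ once more, and then evaluate the diagonal matrix $(-D_1)(E-D_1^2)^{-1} = \diag\big(\tfrac{-\im\sigma_j}{1+\sigma_j^2}\big)_{j=1}^n = D_3$. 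Plugging into the second line of \eqref{17} yields the second identity in \eqref{19}.

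There is essentially no obstacle here: the statement is a direct rewriting of Proposition \ref{Pro6} in the eigenbasis of $\sigma$, and the only points requiring a word of care are that conjugation by the unitary $C_1$ commutes with inversion (which rests only on $C_1^{-1} = C_1^*$) and the bookkeeping of the imaginary units, namely $D_1^2 = -\diag(\sigma_j^2)$, so that $E-D_1^2$ has the positive diagonal $1+\sigma_j^2$ and the displayed formulas for $D_2, D_3$ come out as claimed. The hypothesis $R \in D_{\delta,\mu}$ is not really needed beyond guaranteeing (via Proposition \ref{Pro4}) that $\sigma$ is well defined and that $E\pm\sigma$ and $E-\sigma^2$ are invertible, which was already used in Proposition \ref{Pro6}.
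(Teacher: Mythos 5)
Your proposal is correct and follows the same route as the paper: the paper's proof of Corollary \ref{Cor1} likewise derives \eqref{19} and \eqref{20} by substituting the spectral decomposition $\sigma = C_1 D_1 C_1^*$ from \eqref{10}, \eqref{12} into the identities \eqref{17} of Proposition \ref{Pro6}, exactly as you do. Your write-up merely makes explicit the routine computation $(E-\sigma^2)^{-1} = C_1 (E-D_1^2)^{-1} C_1^*$ and $D_1^2 = -\diag(\sigma_1^2,\ldots,\sigma_n^2)$, which the paper leaves as "follows easily."
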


\begin{proof}
All equalities in \eqref{19}, \eqref{20} are followed easily from \eqref{10}, \eqref{12} and \eqref{17}.
\end{proof}

\begin{corollary}\label{Cor2}
Suppose that $R=\omega+\beta \in D_{\delta, \mu}$. Then
\begin{equation}\label{21}
  \frac{1}{1+\delta ^2}{\Tr}\,\omega^{-1}  \leq {\Tr} R^{-1} \leq  {\Tr} \,\omega^{-1},
\end{equation}
here and in the below, ${\Tr}$ stands for the trace operator of square matrices.
\end{corollary}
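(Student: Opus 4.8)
The plan is to reduce everything to the first identity in \eqref{19} of Corollary \ref{Cor1}. Since a real square matrix and its transpose have the same trace, $\Tr R^{-1} = \Tr\bigl[\tfrac12\bigl(R^{-1}+(R^{-1})^T\bigr)\bigr]$, and by \eqref{19} the right-hand side equals $\Tr\bigl(\omega^{-\frac12}C_1 D_2 C_1^{*}\omega^{-\frac12}\bigr)$. Put $G:=\omega^{-\frac12}C_1\in\mathbb{C}^{n\times n}$; by the cyclic invariance of the trace this is $\Tr(G D_2 G^{*})$, and since $D_2=\diag\bigl(\tfrac{1}{1+\sigma_1^2},\dots,\tfrac{1}{1+\sigma_n^2}\bigr)$ is diagonal, a direct expansion gives
\[
\Tr R^{-1}=\sum_{j=1}^{n}\frac{1}{1+\sigma_j^2}\,(G^{*}G)_{jj},\qquad (G^{*}G)_{jj}=\sum_{i=1}^{n}|G_{ij}|^2\ge 0.
\]

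Next I would use Proposition \ref{Pro4}(ii): for $R\in D_{\delta,\mu}$ all the real numbers $\sigma_j$ satisfy $|\sigma_j|\le\delta<1$, hence $\tfrac{1}{1+\delta^2}\le\tfrac{1}{1+\sigma_j^2}\le 1$ for every $j$. Multiplying these bounds by the nonnegative weights $(G^{*}G)_{jj}$ and summing over $j$ yields
\[
\frac{1}{1+\delta^2}\sum_{j=1}^{n}(G^{*}G)_{jj}\ \le\ \Tr R^{-1}\ \le\ \sum_{j=1}^{n}(G^{*}G)_{jj}.
\]
Finally, since $C_1$ is unitary, $\sum_{j}(G^{*}G)_{jj}=\Tr(G^{*}G)=\Tr(GG^{*})=\Tr\bigl(\omega^{-\frac12}C_1C_1^{*}\omega^{-\frac12}\bigr)=\Tr\,\omega^{-1}$, and \eqref{21} follows.

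This argument is essentially bookkeeping once Corollary \ref{Cor1} and Proposition \ref{Pro4}(ii) are in hand, so I do not anticipate a genuine obstacle; the only points demanding a little care are that the complex-looking matrix $\omega^{-\frac12}C_1 D_2 C_1^{*}\omega^{-\frac12}$ really has a (real) trace equal to $\Tr R^{-1}$ — which is guaranteed because it coincides with the real symmetric matrix $\tfrac12\bigl(R^{-1}+(R^{-1})^T\bigr)$ — and that the weights $(G^{*}G)_{jj}$ are genuinely nonnegative, so that the elementwise inequalities on $\tfrac{1}{1+\sigma_j^2}$ survive summation. An alternative, if one prefers to stay with real matrices throughout, is to write $\omega^{-1}=Q\Lambda Q^{T}$ with $Q$ orthogonal and apply the same weighting to the diagonal entries of $Q^{T}\bigl(C_1D_2C_1^{*}\bigr)Q$, which lie in $[\tfrac{1}{1+\delta^2},1]$ because they are diagonal entries of a Hermitian matrix whose spectrum lies in that interval.
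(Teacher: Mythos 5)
Your proof is correct and follows essentially the same route as the paper: both reduce $\Tr R^{-1}$ via the symmetric part identity in \eqref{19} to the weighted sum $\sum_j (D_2)_{jj}\,(C_1^{*}\omega^{-1}C_1)_{jj}$ (your $(G^{*}G)_{jj}$ equals $(C_1^{*}\omega^{-1}C_1)_{jj}$), bound the weights by $\tfrac{1}{1+\delta^2}\le (D_2)_{jj}\le 1$ using Proposition \ref{Pro4}, and use unitarity of $C_1$ to identify the unweighted sum with $\Tr\,\omega^{-1}$. The differences are purely notational.
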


\begin{proof}
From \eqref{19} and \eqref{20}, we have
\begin{equation}\label{22}
\begin{split}
{\Tr} R^{-1}&= {\Tr} \left (\frac{R^{-1} + \left (R^{-1} \right )^T}{2}    \right )={\Tr}\, \Bigl (\omega ^{-\frac{1}{2}} C_1 D_2 C_1^{*} \omega ^{-\frac{1}{2}} \Bigl )\\
&={\Tr}\, \big ( D_2 C_1^{*} \omega ^{-1}C_1 \big ) =\sum_{j} {(D_2)}_{jj} \big ( C_1^{*} \omega ^{-1} C_1 \big )_{jj}.
\end{split}
\end{equation}
Note that $\omega^{-1}$ is positive definite, $C_1$ is unitary and, by Proposition \ref{Pro4}, $\dfrac{1}{1+\delta^2} \leq {(D_2)}_{jj} \leq 1, j=1, \ldots,n.$ We then obtain from \eqref{22} that
$$ \frac{1}{1+\delta^2}{\Tr}\, \omega^{-1}\leq  \frac{1}{1+\delta^2} \sum_{j}  \big ( C_1^{*} \omega ^{-1} C_1 \big )_{jj}    \leq     { \Tr} R^{-1} \leq \sum_{j}  \big ( C_1^{*} \omega ^{-1} C_1 \big )_{jj}= {\Tr}\, \omega^{-1}.  $$
This completes the proof.
 \end{proof}

\subsection{The second differentials of the nonsymmetric Monge-Amp\`{e}re type functions}
\begin{proposition} \label{Pro7}
Let $F(R)$ be the function given by \eqref{4}, where $\det R>0.$ Let $R^{-1}= \left [ R^{ij} \right ]$ denote the inverse of $R=[R_{ij}].$ Then for all $i,j,k,\ell=1, \ldots,n,$ we have that
\begin{align}
 F^{ij}&\equiv \frac{\partial F (R)}{ \partial R_{ij}}= R^{ji},  \label{23}\\
 F^ {ij,k\ell}& \equiv \frac{\partial^2 F (R)}{\partial R_{ij} \partial R_{k \ell}}= -R^{\ell i}R^{jk}.\label{24}
\end{align}
\end{proposition}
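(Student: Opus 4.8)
The plan is to compute the first and second derivatives of $F(R) = \log(\det R)$ directly from the cofactor expansion of the determinant, treating the $n^2$ entries $R_{ij}$ as independent variables and using the standard fact that $\det R > 0$ guarantees we stay in the domain where $\log$ is smooth. First I would recall the Laplace (cofactor) expansion: fixing a row $i$, $\det R = \sum_{j} R_{ij} \, \mathrm{Cof}_{ij}(R)$, where $\mathrm{Cof}_{ij}(R)$ is the $(i,j)$ cofactor, which does not depend on any entry in row $i$. Hence $\dfrac{\partial (\det R)}{\partial R_{ij}} = \mathrm{Cof}_{ij}(R)$. Combining this with Cramer's rule $R^{ji} = \dfrac{\mathrm{Cof}_{ij}(R)}{\det R}$ (note the index transposition: the inverse has the transposed cofactor matrix divided by the determinant), the chain rule gives $F^{ij} = \dfrac{1}{\det R}\dfrac{\partial(\det R)}{\partial R_{ij}} = \dfrac{\mathrm{Cof}_{ij}(R)}{\det R} = R^{ji}$, which is \eqref{23}.

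For the second derivatives, I would differentiate the identity $F^{ij} = R^{ji}$ with respect to $R_{k\ell}$, i.e. compute $\dfrac{\partial R^{ji}}{\partial R_{k\ell}}$. The key tool is the differentiation formula for the inverse: from $R R^{-1} = E$, differentiating entrywise yields $\dfrac{\partial (R^{-1})}{\partial R_{k\ell}} = - R^{-1} \dfrac{\partial R}{\partial R_{k\ell}} R^{-1}$, and since $\dfrac{\partial R}{\partial R_{k\ell}}$ is the elementary matrix $E_{k\ell}$ with a $1$ in position $(k,\ell)$ and zeros elsewhere, the $(j,i)$ entry of this product is $-\sum_{a,b} R^{ja} (E_{k\ell})_{ab} R^{bi} = - R^{jk} R^{\ell i}$. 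Therefore $F^{ij,k\ell} = \dfrac{\partial R^{ji}}{\partial R_{k\ell}} = - R^{jk} R^{\ell i} = -R^{\ell i} R^{jk}$, which is \eqref{24}.

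There is no real obstacle here; the statement is a classical computation and the only points requiring a little care are bookkeeping ones: getting the index transposition right between cofactors and inverse entries (so that $F^{ij} = R^{ji}$ rather than $R^{ij}$), and correctly extracting the $(j,i)$ component of the matrix product $-R^{-1}E_{k\ell}R^{-1}$ to land on the symmetric-looking expression $-R^{\ell i}R^{jk}$. One should also remark that $R$ need not be symmetric, so $R^{ij} \neq R^{ji}$ in general and the placement of indices genuinely matters. The mild subtlety is purely notational, so I would present the derivation compactly, emphasizing the two formulas $\partial(\det R)/\partial R_{ij} = \mathrm{Cof}_{ij}(R)$ and $\partial(R^{-1})/\partial R_{k\ell} = -R^{-1}E_{k\ell}R^{-1}$ and then reading off \eqref{23} and \eqref{24}.
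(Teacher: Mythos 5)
Your proposal is correct and follows essentially the same route as the paper: the first derivative via the cofactor expansion of $\det R$ together with $R^{ji}=\mathrm{Cof}_{ij}(R)/\det R$, and the second derivative by differentiating the inverse relation — your matrix identity $\partial(R^{-1})/\partial R_{k\ell}=-R^{-1}E_{k\ell}R^{-1}$ is just the matrix form of the paper's componentwise differentiation of $\sum_p R_{sp}R^{pi}=\delta_{is}$ followed by multiplication with $R^{js}$. No gaps; the index bookkeeping you highlight is exactly the only point of care.
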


\begin{proof}
Let $U=[U_{ij}]$ denote the cofactor matrix of $R,$ i.e., $U^T= (\det R)R^{-1}.$ For a fixed $i, i=1, \ldots, n,$ we expand the determinant $\det R$ according to the $i-$th row,
$$ \det R= R_{i1} U_{i1}+\cdots + R_{in}U_{in}. $$
Then 
$$ \frac{\partial F (R)}{ \partial R_{ij}}= \frac{1}{\det R} \frac{\partial  (\det R) }{ \partial R_{ij}}= \frac{1}{\det R} U_{ij}=R^{ji},\ \text{for}\ i,j=1, \ldots, n.  $$
Thus \eqref{23} is proved.

It follows from \eqref{23} that
\begin{equation*}
 \sum_{p} R_{sp} F^{ip} =\sum_{p} R_{sp} R^{pi}= \delta_{is}, \  \text{for}\  i,s=1,\ldots, n.
\end{equation*}
Differentiating this equation with respect to $R_{k\ell},$ we get
$$ \sum_{p} \frac{ \partial R_{sp } }{ \partial R_{k\ell}} F^{ip} +\sum_{p} R_{sp} F^{ip,k\ell}=0, $$
and thus
\begin{equation*}
   \delta _{sk} F^{i\ell} +\sum_{p} R_{sp} F^{ip,k\ell}=0, \ \text{for} \ i,s,k,\ell=1, \ldots, n.
\end{equation*}
Multiplying this equality by $R^{js}$ and summing over $s,$ we have for $i,j,k,\ell=1, \ldots, n,$
$$  \sum_{s} \delta _{sk}F^{i\ell} R^{js}   + \sum_{p,s} R^{js} R_{sp} F^{ip,k\ell}=0,  $$
or
$$R^{\ell i} R^{jk}  +F^{ij,k\ell}=0 ,$$
which gives the required result \eqref{24}. The proof is now completed.
\end{proof}

\vspace{10pt}
Now we consider the second order differentials of the function $F(R)$ given by \eqref{4}, where $R \in D_{\delta, \mu}$, $D_{\delta, \mu}$ is the unbounded and convex set given in \eqref{5}. Let $M=[M_{ij}] \in \mathbb{R}^{n \times n}$. We consider the function $\mathcal{F}$ defined as follows:
\begin{equation}\label{25}
\begin{split}
\mathcal{F}(R,M)&:  D_{\delta, \mu} \times \mathbb{R}^{n \times n} \rightarrow \mathbb{R},\\
\mathcal{F}(R,M)&=\sum_{i,j,k,\ell}\frac{\partial^2 F}{\partial R_{ij} \partial R_{k\ell}} M_{ij}M_{k\ell }\\
& = - \sum_{i,j,k,\ell}R^{\ell i}R^{jk} M_{ij}M_{k\ell }.
\end{split}
\end{equation}

\begin{proposition}\label {Pro8}
Suppose $R\in D_{\delta, \mu}.$ Then for any matrix $M=P+Q \in \mathbb{R}^{n \times n},$ the following equality holds
\begin{equation}\label{26}
\mathcal{F}(R,M)=\mathcal{F}(R,P) + \mathcal{F}(R,Q) +2 \mathcal{L}(R,P,Q),
\end{equation}
where
\begin{equation}\label{27}
\mathcal{L}(R,P,Q) = - \sum_{i,j,k,\ell}R^{\ell i}R^{jk} P_{ij}Q_{k\ell }.
\end{equation}
\end{proposition}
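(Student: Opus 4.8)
The plan is to recognise $\mathcal{F}(R,\cdot)$ as the quadratic form attached to a symmetric bilinear form, after which \eqref{26} is nothing but the polarisation identity. Concretely, for $R \in D_{\delta,\mu}$ (so that $R^{-1}$ exists, by Proposition \ref{Pro3}) and for $P,Q \in \mathbb{R}^{n\times n}$, set
\[
\Phi(P,Q) = -\sum_{i,j,k,\ell} R^{\ell i}R^{jk} P_{ij} Q_{k\ell} = \sum_{i,j,k,\ell} F^{ij,k\ell} P_{ij} Q_{k\ell},
\]
the last equality being \eqref{24}. Comparing with \eqref{25} and \eqref{27}, one has $\mathcal{F}(R,M) = \Phi(M,M)$ for every $M\in\mathbb{R}^{n\times n}$ and $\mathcal{L}(R,P,Q) = \Phi(P,Q)$, so it suffices to prove the algebraic identity $\Phi(P+Q,P+Q) = \Phi(P,P) + \Phi(Q,Q) + 2\Phi(P,Q)$.

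First I would note that $\Phi$ is bilinear in the pair $(P,Q)$, which is immediate from the defining sum since each entry $P_{ij}$ and each entry $Q_{k\ell}$ occurs linearly. Expanding $\Phi(P+Q,P+Q)$ by bilinearity then gives
\[
\Phi(P+Q,P+Q) = \Phi(P,P) + \Phi(P,Q) + \Phi(Q,P) + \Phi(Q,Q).
\]

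Next I would verify the symmetry $\Phi(P,Q) = \Phi(Q,P)$. Starting from $\Phi(Q,P) = -\sum_{i,j,k,\ell} R^{\ell i}R^{jk} Q_{ij} P_{k\ell}$ and renaming the summation indices by the permutation $(i,j,k,\ell)\mapsto(k,\ell,i,j)$, the right-hand side becomes $-\sum_{i,j,k,\ell} R^{jk}R^{\ell i} P_{ij} Q_{k\ell}$; since the scalar factors $R^{jk}$ and $R^{\ell i}$ commute, this is exactly $\Phi(P,Q)$. (Equivalently, this is just the symmetry $F^{ij,k\ell}=F^{k\ell,ij}$ of the Hessian of $F$ visible in \eqref{24}.) Hence $\Phi(P,Q)+\Phi(Q,P) = 2\Phi(P,Q) = 2\mathcal{L}(R,P,Q)$, and substituting into the displayed expansion yields $\mathcal{F}(R,M) = \mathcal{F}(R,P) + \mathcal{F}(R,Q) + 2\mathcal{L}(R,P,Q)$, which is \eqref{26}.

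I do not expect any genuine obstacle here; the only point requiring care is the index bookkeeping in the relabelling step, so that the two cross terms $\Phi(P,Q)$ and $\Phi(Q,P)$ are correctly recombined into $2\mathcal{L}(R,P,Q)$ rather than being left as the unsimplified sum $\mathcal{L}(R,P,Q)+\mathcal{L}(R,Q,P)$. Note also that membership of $R$ in $D_{\delta,\mu}$ is used only to guarantee that $R$ is invertible, so the identity in fact holds for every nonsingular $R$.
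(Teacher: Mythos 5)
Your proof is correct and follows essentially the same route as the paper: expand the quadratic sum for $M=P+Q$ and combine the two cross terms by the index relabelling $i\leftrightarrow k$, $j\leftrightarrow \ell$, which is exactly the paper's computation, merely repackaged in the language of a symmetric bilinear form. The relabelling step and the observation that $R\in D_{\delta,\mu}$ is only needed for invertibility of $R$ are both accurate.
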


\begin{proof}
It follows from \eqref{25} that
\begin{equation*}
\begin{split}
\mathcal{F}( R,M)& = - \sum_{ i,j,k,\ell} R^{\ell i} R^{jk}  P_{ij} P_{k\ell} - \sum_{ i,j,k,\ell} R^{\ell i} R^{jk}  Q_{ij} Q_{k\ell} \\
& - \sum_{ i,j,k,\ell} R^{\ell i} R^{jk}  P_{ij} Q_{k\ell} - \sum_{ i,j,k,\ell} R^{\ell i} R^{jk} P_{k\ell} Q_{ij} .
\end{split}
\end{equation*}
Note that
\begin{equation*}
 \sum_{ i,j,k,\ell} R^{\ell i} R^{jk} P_{k\ell}  Q_{ij}  \overset {   \substack {\ell \leftrightarrow j \\ i \leftrightarrow k }}{=}  \sum_{ i,j,k,\ell} R^{jk} R^{\ell i}P_{i j}  Q_{k\ell} . 
\end{equation*}
Combining these equalities, we obtain the conclusion of Proposition \ref{Pro8}.
\end{proof}

\begin{proposition}\label {Pro9}
Suppose that $R \in D_{\delta, \mu}.$ Then for any symmetric matrix $P \in \mathbb{R}^{n \times n},$ the following equality holds
\begin{equation}\label{28}
\mathcal{F}(R,P)=- \left [ \mathcal{G}(R,P) \right ]^2 +\mathcal{H}(R,P),
\end{equation}
where
\begin{equation}\label{29}
\begin{split}
\mathcal{G}(R,P)&={\Tr} \big ( R^{-1}P  \big ),\\
\mathcal{H}(R,P)&= 2 \,{\Tr} \left [ \left ( R^{-1} \right )^{(2)}  P^{(2)}\right ].
\end{split}
\end{equation}
\end{proposition}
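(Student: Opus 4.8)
The plan is to expand $\mathcal{F}(R,P)$ from its definition \eqref{25} and reorganize the quadruple sum into two pieces: a "trace-squared" term and a "compound" term. First I would recall from \eqref{25} that
\[
\mathcal{F}(R,P)=-\sum_{i,j,k,\ell}R^{\ell i}R^{jk}P_{ij}P_{k\ell},
\]
and since $P$ is symmetric, $P_{ij}=P_{ji}$ and $P_{k\ell}=P_{\ell k}$. The natural idea is to symmetrize/antisymmetrize the pair of indices $(i,\ell)$ against $(j,k)$: write $R^{\ell i}R^{jk}=\tfrac12\!\left(R^{\ell i}R^{jk}+R^{ki}R^{j\ell}\right)+\tfrac12\!\left(R^{\ell i}R^{jk}-R^{ki}R^{j\ell}\right)$ (after suitable relabelling so the two pieces pair correctly against $P_{ij}P_{k\ell}$). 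The symmetric-in-$(i,\ell)\leftrightarrow$ part, contracted against $P_{ij}P_{k\ell}$, should collapse (using $P$ symmetric and $\sum_j R^{ji}P_{ij}$-type contractions) to $\left(\sum_{i,j}R^{ji}P_{ij}\right)^2=\left(\mathrm{Tr}(R^{-1}P)\right)^2=[\mathcal{G}(R,P)]^2$, giving the first term of \eqref{28} with its minus sign.

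For the remaining (antisymmetric) part, I would recognize the combination $R^{\ell i}R^{jk}-R^{ki}R^{j\ell}$ as exactly a $2\times 2$ minor of $R^{-1}=[R^{ij}]$, i.e. an entry of the second compound $\left(R^{-1}\right)^{(2)}$ as in Definition \ref{Def3} (with the appropriate sign and index ordering $i<k$, $j<\ell$). Likewise the contraction against $P_{ij}P_{k\ell}$, once the four free indices are restricted to $i<k$, $j<\ell$ and the combinatorial factor of $2$ from the two orderings is accounted for, produces the $2\times2$ minors $P^{(2)}_{ik,j\ell}$ of $P$. Summing $\left(R^{-1}\right)^{(2)}_{ik,j\ell}P^{(2)}_{ik,j\ell}$ over the lexically ordered pairs is, using $\left(R^{-1}\right)^{(2)}$ symmetric (Proposition \ref{Pro1}(vi), since $R^{-1}$ need not be symmetric one should instead use $\left(R^{-1}\right)^{(2)}_{ik,j\ell}P^{(2)}_{ik,j\ell}=\left(\left(R^{-1}\right)^{(2)}\right)_{(ik),(j\ell)}\left(P^{(2)}\right)_{(j\ell),(ik)}$ via $P^{(2)}$ symmetric by Proposition \ref{Pro1}(vi)), precisely $\mathrm{Tr}\!\left[\left(R^{-1}\right)^{(2)}P^{(2)}\right]$, and the bookkeeping of signs and the factor $2$ yields $\mathcal{H}(R,P)=2\,\mathrm{Tr}\!\left[\left(R^{-1}\right)^{(2)}P^{(2)}\right]$.

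Concretely, I would carry out the steps in this order: (1) start from $\mathcal{F}(R,P)=-\sum R^{\ell i}R^{jk}P_{ij}P_{k\ell}$; (2) add and subtract the term obtained by swapping $i\leftrightarrow k$ (equivalently $\ell\leftrightarrow j$), using $P$ symmetric so that the swapped term equals $-\sum R^{ki}R^{j\ell}P_{ij}P_{k\ell}$ (up to relabelling), thereby splitting $\mathcal{F}$ into $\tfrac12(\text{sum})+\tfrac12(\text{difference})$; (3) evaluate the "sum" half: show $\sum\left(R^{\ell i}R^{jk}+R^{ki}R^{j\ell}\right)P_{ij}P_{k\ell}$ factors as $2\left(\sum_{i,j}R^{ji}P_{ij}\right)^2$, i.e. $2\,[\mathcal{G}(R,P)]^2$; (4) evaluate the "difference" half: rewrite $\sum\left(R^{\ell i}R^{jk}-R^{ki}R^{j\ell}\right)P_{ij}P_{k\ell}$ by restricting to $i<k$, $j<\ell$ (each such block appears with multiplicity $4$ from the $(i,k)$ and $(j,\ell)$ orderings, partly cancelling, partly reinforcing) and identify it with $-4\sum_{i<k,\,j<\ell}\left(R^{-1}\right)^{(2)}_{ik,j\ell}P^{(2)}_{ik,j\ell}=-4\,\mathrm{Tr}\!\left[\left(R^{-1}\right)^{(2)}P^{(2)}\right]$; (5) combine: $\mathcal{F}(R,P)=-[\mathcal{G}(R,P)]^2+2\,\mathrm{Tr}\!\left[\left(R^{-1}\right)^{(2)}P^{(2)}\right]$, which is \eqref{28}.

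The main obstacle will be step (4): getting all the signs and combinatorial multiplicities right when passing from the unrestricted quadruple sum over $i,j,k,\ell$ to the sum over lexically ordered index pairs that defines the second compound. One must be careful that $R^{-1}$ is in general nonsymmetric, so $\left(R^{-1}\right)^{(2)}_{ik,j\ell}$ is the minor $R^{\ell i}R^{jk}-R^{ki}R^{j\ell}$ (note the transposed index placement coming from $F^{ij,k\ell}=-R^{\ell i}R^{jk}$), and that the contraction against the \emph{symmetric} $P$ is what makes only the "minor $\times$ minor" combination survive; the diagonal blocks $i=k$ or $j=\ell$ contribute nothing to the difference and everything to the sum, which is the mechanism separating $\mathcal{G}$ from $\mathcal{H}$. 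Once the index combinatorics is pinned down, steps (3) and (5) are routine.
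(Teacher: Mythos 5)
Your overall plan (split the quadruple sum into a trace-squared piece and a second-compound piece) is the right family of ideas, but the concrete decomposition you propose does not work, and both key intermediate identities are false. Your splitting $R^{\ell i}R^{jk}=\tfrac12\bigl(R^{\ell i}R^{jk}+R^{ki}R^{j\ell}\bigr)+\tfrac12\bigl(R^{\ell i}R^{jk}-R^{ki}R^{j\ell}\bigr)$ antisymmetrizes only in the single pair $k\leftrightarrow\ell$; since $P_{ij}P_{k\ell}$ is \emph{symmetric} under $k\leftrightarrow\ell$ (because $P^T=P$), your ``difference'' half contracts to exactly $0$, not to $-4\,\Tr\bigl[(R^{-1})^{(2)}P^{(2)}\bigr]$, and your ``sum'' half therefore carries all of $\mathcal F(R,P)$ and equals $2\,\Tr\bigl((R^{-1}P)^2\bigr)$, not $2\bigl[\Tr(R^{-1}P)\bigr]^2$ (these genuinely differ: for $R=E$, $P=\diag(1,-1)$ one gets $4$ versus $0$). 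Moreover $R^{\ell i}R^{jk}-R^{ki}R^{j\ell}$ is \emph{not} an entry of $(R^{-1})^{(2)}$: the two products use different row pairs ($\{\ell,j\}$ versus $\{k,j\}$), so it is not a $2\times2$ minor of $R^{-1}$. The mechanism you invoke in step (4) — restricting to $i<k$, $j<\ell$ so that $P$-minors appear — only operates when the coefficient is antisymmetric in $i\leftrightarrow k$ \emph{and} in $j\leftrightarrow\ell$ simultaneously, which yours is not. So although your step (5) states the correct final formula, the proposed derivation does not establish it.

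The paper's manipulation is different: using $P^T=P$ it rewrites $\mathcal F(R,P)=-\sum R^{\ell i}R^{jk}P_{ij}P_{k\ell}$ (which equals $-\Tr\bigl((R^{-1}P)^2\bigr)$), then \emph{adds and subtracts} $\bigl[\Tr(R^{-1}P)\bigr]^2=[\mathcal G(R,P)]^2$ and checks that
\begin{equation*}
\bigl[\Tr(R^{-1}P)\bigr]^2-\Tr\bigl((R^{-1}P)^2\bigr)
=\tfrac12\sum_{i,j,k,\ell}\bigl(R^{ij}R^{k\ell}-R^{i\ell}R^{kj}\bigr)\bigl(P_{ij}P_{k\ell}-P_{i\ell}P_{kj}\bigr)
=2\sum_{\substack{i<k\\ j<\ell}}\bigl(R^{-1}\bigr)^{(2)}_{ik,j\ell}P^{(2)}_{j\ell,ik},
\end{equation*}
where now \emph{both} factors are genuine minors (antisymmetric in $i\leftrightarrow k$ and $j\leftrightarrow\ell$), the restriction to ordered pairs gives the factor $2$, and the symmetry of $P^{(2)}$ (Proposition \ref{Pro1}(vi)) turns the last sum into $\Tr\bigl[(R^{-1})^{(2)}P^{(2)}\bigr]=\mathcal H(R,P)/2$. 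If you want to salvage your approach with minimal index bookkeeping, note instead that by Binet--Cauchy (Proposition \ref{Pro1}(i)) $\Tr\bigl[(R^{-1})^{(2)}P^{(2)}\bigr]=\Tr\bigl[(R^{-1}P)^{(2)}\bigr]=\tfrac12\Bigl(\bigl[\Tr(R^{-1}P)\bigr]^2-\Tr\bigl((R^{-1}P)^2\bigr)\Bigr)$, which combined with $\mathcal F(R,P)=-\Tr\bigl((R^{-1}P)^2\bigr)$ gives \eqref{28} at once (and incidentally explains why Proposition 10 has the identical form for skew-symmetric $Q$).
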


\begin{proof}
From \eqref{25} and the fact that $P^T=P,$ we have
\begin{equation*}
\begin{split}
\mathcal{F}(R,P) &= -\sum_{i,j,k,\ell} R^{\ell i} R^{jk} P_{ij} P_{k\ell} = -\sum_{i,j,k,\ell} R^{\ell i} R^{jk} P_{ji} P_{\ell k}\\
& \overset {   \substack {i \leftrightarrow \ell \\ j \leftrightarrow k }}{=} - \sum_{i,j,k,\ell} R^{i\ell} R^{kj} P_{k\ell} P_{i j}
\overset { j \leftrightarrow \ell}{=} -\sum_{i,j,k,\ell} R^{i j} R^{k\ell} P_{kj} P_{i \ell}. 
\end{split}
\end{equation*}
It follows that
\begin{equation*}
\begin{split}
\mathcal{F}(R,P)&=-\frac{1}{2} \Biggl ( \sum_{i,j} R^{ij} P_{ij} \Biggl ) \Biggl ( \sum_{k,\ell} R^{k\ell} P_{k\ell} \Biggl ) -\frac{1}{2} \Biggl ( \sum_{i,\ell} R^{i\ell} P_{i\ell} \Biggl ) \Biggl ( \sum_{j,k} R^{kj} P_{kj} \Biggl ) \\
&+\frac{1}{2} \Biggl [ \Biggl (  \sum_{i,j} R^{ij} P_{ij} \Biggl )   \Biggl (  \sum_{k,\ell} R^{k\ell} P_{k\ell} \Biggl )   + \Biggl (  \sum_{i,\ell} R^{i\ell} P_{i\ell}  \Biggl )  \Biggl (  \sum_{j,k} R^{kj} P_{kj} \Biggl ) \\
&\quad\quad\quad\quad\quad\quad\quad\quad \quad\quad - \sum_{i,j,k,\ell} R^{i\ell} R^{kj} P_{k \ell} P_{ij} -  \sum_{i,j,k,\ell} R^{ij} R^{k \ell} P_{kj} P_{i\ell} \Biggl ] \\
&=- \Biggl ( \sum_{i,j} R^{ij} P_{ij} \Biggl ) ^2+2\sum_{\substack{i<k \\ j<\ell}  } \big ( R^{ij} R^{k\ell}-R^{i\ell} R^{kj} \big )   (P_{ij} P_{k\ell} - P_{i\ell} P_{kj} )\\
&=- \Biggl ( \sum_{i,j} R^{ij} P_{ij} \Biggl ) ^2+ 2 \sum_{\substack{i<k \\ j<\ell}} \Bigl ( \big ( R^{-1}\big)^{(2)}_{ik,j\ell } P^{(2)}_{j\ell ,ik}  \Bigl )\\
&=- \left [ {\rm Tr} \left ( R^{-1}P  \right ) \right ]^2+ 2 \, {\rm Tr} \left [ \left ( R^{-1} \right )^{(2)}  P^{(2)}\right ].
\end{split}
\end{equation*}
This completes the proof.
\end{proof}

\begin{proposition}\label {Pro10}
Suppose that $R \in D_{\delta, \mu}$. Then for any skew-symmetric matrix $Q \in \mathbb{R}^{n \times n},$ the following equality holds
\begin{equation}\label{30}
\mathcal{F}(R,Q)=- \left [ \mathcal{G}(R,Q)\right ]^2 +\mathcal{H}(R,Q),
\end{equation}
where the functions $\mathcal{G}$ and $\mathcal{H}$ are defined as in \eqref{29}.
\end{proposition}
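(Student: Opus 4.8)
The plan is to run the proof of Proposition \ref{Pro9} essentially verbatim with the symmetric matrix $P$ replaced by the skew-symmetric matrix $Q$, checking at each place where $P^{T}=P$ was used that the skew-symmetry $Q^{T}=-Q$ serves equally well. By \eqref{25}, $\mathcal{F}(R,Q)=-\sum_{i,j,k,\ell}R^{\ell i}R^{jk}Q_{ij}Q_{k\ell}$. The first step in Proposition \ref{Pro9} was to replace $P_{ij}P_{k\ell}$ by $P_{ji}P_{\ell k}$; for $Q$ this is still legitimate because $Q_{ij}Q_{k\ell}=(-Q_{ji})(-Q_{\ell k})=Q_{ji}Q_{\ell k}$, the two sign changes cancelling. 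The ensuing relabelings of summation indices and the splitting trick are purely formal, so, exactly as in Proposition \ref{Pro9}, I arrive at
$$\mathcal{F}(R,Q)=-\Bigl(\sum_{i,j}R^{ij}Q_{ij}\Bigr)^{2}+2\sum_{\substack{i<k\\ j<\ell}}\bigl(R^{ij}R^{k\ell}-R^{i\ell}R^{kj}\bigr)\bigl(Q_{ij}Q_{k\ell}-Q_{i\ell}Q_{kj}\bigr).$$

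Next I would identify the two summands with $-[\mathcal{G}(R,Q)]^{2}$ and $\mathcal{H}(R,Q)$. For the first, $\sum_{i,j}R^{ij}Q_{ij}=\sum_{i,j}(R^{-1})_{ij}Q_{ij}=-\sum_{i,j}(R^{-1})_{ij}Q_{ji}=-\Tr(R^{-1}Q)=-\mathcal{G}(R,Q)$, and the extra sign disappears upon squaring. For the second, $R^{ij}R^{k\ell}-R^{i\ell}R^{kj}=(R^{-1})^{(2)}_{ik,j\ell}$ and $Q_{ij}Q_{k\ell}-Q_{i\ell}Q_{kj}=Q^{(2)}_{ik,j\ell}$; moreover Proposition \ref{Pro1}(vi), which covers skew-symmetric matrices as well as symmetric ones, gives that $Q^{(2)}$ is symmetric, so $Q^{(2)}_{ik,j\ell}=Q^{(2)}_{j\ell,ik}$. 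Hence $\sum_{i<k,\,j<\ell}(R^{-1})^{(2)}_{ik,j\ell}Q^{(2)}_{ik,j\ell}=\sum_{i<k,\,j<\ell}(R^{-1})^{(2)}_{ik,j\ell}Q^{(2)}_{j\ell,ik}=\Tr[(R^{-1})^{(2)}Q^{(2)}]$, and multiplying by $2$ produces $\mathcal{H}(R,Q)$. Combining these gives \eqref{30}.

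I do not expect a genuine obstacle; the only point a reader might find suspicious is whether the minus signs attached to the entries of a skew-symmetric matrix damage the identity. They do not, because throughout the argument those entries enter only in the products $Q_{ij}Q_{k\ell}$ and in the $2\times 2$ minors $Q^{(2)}_{ik,j\ell}$, where the signs always occur in pairs and cancel --- which is precisely why Proposition \ref{Pro1}(vi) is stated for symmetric and skew-symmetric matrices together. If one wishes to dispense with the index bookkeeping entirely, one may observe that \eqref{25} says $\mathcal{F}(R,M)=-\Tr\bigl((R^{-1}M)^{2}\bigr)$ for \emph{every} $M\in\mathbb{R}^{n\times n}$; since $\Tr X^{(2)}$ is the second elementary symmetric function of the eigenvalues of $X$, one has $\Tr(X^{2})=(\Tr X)^{2}-2\Tr X^{(2)}$, and applying this with $X=R^{-1}M$ together with Binet--Cauchy, Proposition \ref{Pro1}(i), in the form $(R^{-1}M)^{(2)}=(R^{-1})^{(2)}M^{(2)}$, yields $\mathcal{F}(R,M)=-[\Tr(R^{-1}M)]^{2}+2\Tr[(R^{-1})^{(2)}M^{(2)}]$ in one stroke; the case $M=Q$ is \eqref{30}.
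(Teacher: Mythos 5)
Your main argument is correct and is exactly the route the paper intends: its proof of Proposition \ref{Pro10} is just ``similar to that of Proposition \ref{Pro9}'', and you have verified the two places where that similarity could conceivably fail --- the replacement $Q_{ij}Q_{k\ell}=Q_{ji}Q_{\ell k}$ (signs cancel in pairs), the extra minus in $\sum_{i,j}R^{ij}Q_{ij}=-\Tr(R^{-1}Q)$ which disappears on squaring, and the symmetry of $Q^{(2)}$ from Proposition \ref{Pro1}(vi), which is precisely why that item is stated for skew-symmetric matrices as well. Your closing remark, however, is a genuinely different and nicer proof: observing that \eqref{25} says $\mathcal{F}(R,M)=-\Tr\bigl((R^{-1}M)^{2}\bigr)$ for every $M$, and combining the elementary identity $\Tr(X^{2})=(\Tr X)^{2}-2\,\Tr X^{(2)}$ (immediate from $\Tr X^{(2)}=\sum_{i<k}(X_{ii}X_{kk}-X_{ik}X_{ki})$) with Binet--Cauchy $(R^{-1}M)^{(2)}=(R^{-1})^{(2)}M^{(2)}$, gives $\mathcal{F}(R,M)=-[\mathcal{G}(R,M)]^{2}+\mathcal{H}(R,M)$ for \emph{arbitrary} $M\in\mathbb{R}^{n\times n}$ in one stroke. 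That version subsumes Propositions \ref{Pro9} and \ref{Pro10} simultaneously, eliminates all index bookkeeping, and clarifies that the symmetry or skew-symmetry of the argument is not needed for the algebraic identity itself but only for the later spectral estimates; the paper's componentwise route buys nothing extra here beyond staying close to the notation used in the subsequent computations.
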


\begin{proof}
The proof is similar to that of Proposition \ref{Pro9}.
\end{proof}

\begin{proposition}\label {Pro11}
Suppose that $R \in D_{\delta, \mu}.$ Then for any symmetric matrix $P\in \mathbb{R}^{n \times n}$ and any skew-symmetric matrix $Q \in \mathbb{R}^{n \times n}$, the following equality holds
\begin{equation}\label{31}
\mathcal{L}(R,P,Q)=-\frac{1}{2}\, {\Tr} \left [ \left ( R^{-1}- {(R^{-1})}^T \right )  P \left ( R^{-1}+{(R^{-1})}^T \right ) Q  \right ],
\end{equation}
where $\mathcal{L}(R,P,Q)$ is defined by \eqref{27}.
\end{proposition}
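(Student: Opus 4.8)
The plan is to express both sides of \eqref{31} as traces of products of four matrices and then to match them using cyclic invariance of the trace together with the symmetry type of each factor.

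First I would rewrite the left-hand side as a matrix trace. Writing $R^{-1} = [R^{ij}]$ (which exists by Proposition \ref{Pro3}), in the sum defining $\mathcal{L}(R,P,Q)$ in \eqref{27} the summand $R^{\ell i}R^{jk}P_{ij}Q_{k\ell}$ contracts the indices along the chain $\ell \to i$ (via $R^{-1}$), $i \to j$ (via $P$), $j \to k$ (via $R^{-1}$), $k \to \ell$ (via $Q$), so that $\mathcal{L}(R,P,Q) = -\Tr\bigl(R^{-1}PR^{-1}Q\bigr)$. Next I would split $R^{-1}$ into its symmetric and skew-symmetric parts, $R^{-1} = S + K$ with $S = \tfrac12\bigl(R^{-1}+(R^{-1})^T\bigr)$ symmetric and $K = \tfrac12\bigl(R^{-1}-(R^{-1})^T\bigr)$ skew-symmetric; then the right-hand side of \eqref{31} equals $-\tfrac12\Tr\bigl[(2K)P(2S)Q\bigr] = -2\,\Tr(KPSQ)$.

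It then remains to show $\Tr(R^{-1}PR^{-1}Q) = 2\,\Tr(KPSQ)$. Expanding, $\Tr(R^{-1}PR^{-1}Q) = \Tr(SPSQ) + \Tr(SPKQ) + \Tr(KPSQ) + \Tr(KPKQ)$, and I would dispose of the four terms by taking transposes: using $\Tr(X) = \Tr(X^T)$, the rule $(XYZW)^T = W^TZ^TY^TX^T$, the relations $P^T = P$, $Q^T = -Q$, $S^T = S$, $K^T = -K$, and cyclic permutation under the trace, one finds $\Tr(SPSQ) = -\Tr(SPSQ)$ and $\Tr(KPKQ) = -\Tr(KPKQ)$, so these two terms vanish, while $\Tr(SPKQ) = \Tr(KPSQ)$. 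Hence $\Tr(R^{-1}PR^{-1}Q) = 2\,\Tr(KPSQ)$, and combining with the formulas above gives $\mathcal{L}(R,P,Q) = -2\,\Tr(KPSQ) = -\tfrac12\Tr\bigl[(R^{-1}-(R^{-1})^T)P(R^{-1}+(R^{-1})^T)Q\bigr]$, which is \eqref{31}.

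The only delicate point --- and what I would call the main obstacle, though it is purely combinatorial --- is the first step: one must carefully keep track of which slot of each of the four symbols $R^{\ell i}$, $R^{jk}$, $P_{ij}$, $Q_{k\ell}$ is a row index and which is a column index, so that the contraction chain, and hence the order of the factors $R^{-1}, P, R^{-1}, Q$ inside the trace, comes out correctly. Once that identification is secured, the sign-tracking in the transpose identities is entirely routine.
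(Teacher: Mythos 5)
Your proof is correct, and it rests on the same two elementary facts as the paper's: invariance of the trace under transposition and cyclic permutation, and the vanishing of $\Tr(MQ)$ when $M$ is symmetric and $Q$ is skew-symmetric. The organization differs slightly. You first identify the whole sum \eqref{27} as the single trace $\mathcal{L}(R,P,Q)=-\Tr\left(R^{-1}PR^{-1}Q\right)$ (your index bookkeeping is right, consistent with the convention $(R^{-1})_{ij}=R^{ij}$ from \eqref{23}) and then decompose $R^{-1}=S+K$ into its symmetric and skew-symmetric parts, expanding into four traces of which $\Tr(SPSQ)$ and $\Tr(KPKQ)$ vanish while the two cross terms coincide, giving $-2\Tr(KPSQ)$, which is literally the right-hand side of \eqref{31}. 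The paper never writes $\mathcal{L}$ as one trace; it instead symmetrizes at the index level (relabelling dummy indices and using $P^{T}=P$, $Q^{T}=-Q$ entrywise) to reach $\mathcal{L}=\frac{1}{2}\Tr\left[(R^{-1})^{T}P(R^{-1})^{T}Q\right]-\frac{1}{2}\Tr\left[R^{-1}PR^{-1}Q\right]$, and then inserts the two vanishing traces $\Tr\left[R^{-1}P(R^{-1})^{T}Q\right]=\Tr\left[(R^{-1})^{T}PR^{-1}Q\right]=0$ (a symmetric matrix paired with the skew $Q$) to reassemble \eqref{31}. The two computations expand the same quantity over the same four traces, just in the bases $\{R^{-1},(R^{-1})^{T}\}$ versus $\{S,K\}$; yours is arguably tidier because the target expression is exactly $-\frac{1}{2}\Tr\left[(2K)P(2S)Q\right]$, whereas the paper's version avoids introducing $S$ and $K$ as separate objects. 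No gap to report; the invertibility of $R$ needed for the first step is indeed supplied by Proposition \ref{Pro3}.
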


\begin{proof}
Note that
\begin{equation*}
\begin{split}
\mathcal{L}(R,P,Q)&= - \sum_{i,j,k,\ell} R^{\ell i} R^{jk} P_{ij} Q_{k\ell} \overset {   \substack {i \leftrightarrow j \\ k \leftrightarrow \ell }}{=} - \sum_{i,j,k,\ell} R^{kj} R^{i\ell} P_{ji} Q_{\ell k}.
\end{split}
\end{equation*}
From this and the fact that $P^T=P, Q^T=-Q,$ we get
\begin{equation*}
\begin{split}
&\mathcal{L}(R,P,Q)= - \frac{1}{2} \sum_{i,j,k,\ell} R^{\ell i} R^{jk} P_{ij} Q_{k\ell}  -\frac{1}{2} \sum_{i,j,k,\ell} R^{kj} R^{i\ell} P_{ji} Q_{\ell k}\\
&= \frac{1}{2} \sum_{i,j,k,\ell} R^{jk} P_{ji}R^{\ell i}Q_{\ell k}    -\frac{1}{2} \sum_{i,j,k,\ell } R^{kj}P_{ji}R^{i \ell} Q_{\ell k}\\
&= \frac{1}{2} \, {\Tr} \left [ {(R^{-1})}^T P{(R^{-1})}^T Q  \right ]    -\frac{1}{2} \, {\rm Tr} \left [ R^{-1} P R^{-1}Q \right ]\\
&=-\frac{1}{2} \, {\Tr} \left [ \left ( R^{-1} -{(R^{-1})}^T  \right )  P{(R^{-1})}^T Q  \right ]  - \frac{1}{2} \, {\rm Tr}  \left [ \left ( { R^{-1}-  (R^{-1})}^T \right )  P R^{-1} Q  \right ]\\
&= -\frac{1}{2}\, {\Tr} \left [ \left ( R^{-1}- {(R^{-1})}^T \right )  P \left ( {(R^{-1})}^T +R^{-1} \right ) Q  \right ],
\end{split}
\end{equation*}
where in the fourth step, we have used the equality 
$${\Tr} \left [ R^{-1} P {(R^{-1})}^T  Q  \right ] =  {\Tr} \left [{(R^{-1})}^T P R^{-1}  Q  \right ] =0,$$
which holds due to the skew-symmetry of $Q$ and the symmetry of matrices \\
$R^{-1} P {(R^{-1})}^T$, ${(R^{-1})}^T P R^{-1}.$ The proof is completed.
\end{proof}

\par For $R=\omega + \beta \in D_{\delta, \mu}$ fixed and for matrix $M \in {\mathbb {R}}^{n \times n},$ we set 
\begin{equation}\label{32}
\tilde{M} \equiv \omega ^{-\frac {1}{2}}M \omega ^{-\frac {1}{2}} = \big [ \tilde{M}_{jk} \big ] ,\ \ \ 
\tilde{\tilde{M}} \equiv C_1^{*}\tilde{M}C_1 = \bigl [ \tilde{\tilde{M}}_{jk} \bigl ],
\end{equation}
where $C_1$ is the unitary matrix defined in \eqref{12}. It is obvious that 
\begin{equation}\label{33}
 \big |\tilde{M} \big | = \bigl | \tilde{\tilde{M}}\bigl |, \ \ \ \big \| \tilde{M} \big \| = \Bigl \| \tilde{\tilde{M}}\Bigl \|,
\end{equation}
where $|\cdot |$ and $ \| \cdot \|$ denote, respectively, the Frobenius norm and the operator norm on $\mathbb{C}^{n \times n},$ which are defined as follows: for any matrix $K=[K_{ij}] \in \mathbb{C}^{n \times n},$
$$  |K| = \left ( \sum_{i,j} |K_{ij}|^2 \right )^{1/2},  \ \ \ \|K\|= \underset{\xi \in \mathbb{C}^n, |\xi|=1}{\sup}\, |K \xi|. $$

\begin{proposition} \label{Pro12}
For any matrix $M \in \mathbb{R}^{n \times n}$, we have the following estimate
\begin{equation}\label{34}
 \big (\lambda_{\max}(\omega)\big )^{-2} |M|^2 \leq  \big | \tilde{M}\big |^2 \leq \big ( \lambda_{\min} (\omega) \big )^{-2} |M|^2,
\end{equation}
where $\lambda_{\max}(\omega) $ and $ \lambda_{\min}(\omega)$ denote, respectively, the largest and smallest eigenvalues of $\omega .$
\end{proposition}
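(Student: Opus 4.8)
The plan is to reduce \eqref{34} to the elementary compatibility between the Frobenius norm $|\cdot|$ and the operator norm $\|\cdot\|$. Recall the mixed submultiplicativity $|AB| \le \|A\|\,|B|$ and $|AB| \le |A|\,\|B\|$, valid for all $A,B\in\mathbb{C}^{n\times n}$; the first follows from $|AB|^2 = \sum_k |Ab_k|^2 \le \|A\|^2\sum_k|b_k|^2 = \|A\|^2|B|^2$ with $b_k$ the columns of $B$, and the second by transposing. Since $\omega$ is symmetric and positive definite, the matrices $\omega^{1/2}$ and $\omega^{-1/2}$ are symmetric positive definite as well, and their operator norms are their largest eigenvalues, namely $\|\omega^{1/2}\| = (\lambda_{\max}(\omega))^{1/2}$ and $\|\omega^{-1/2}\| = (\lambda_{\min}(\omega))^{-1/2}$.

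For the right-hand inequality in \eqref{34}, I would apply the mixed submultiplicativity twice to $\tilde M = \omega ^{-\frac12}M\omega ^{-\frac12}$:
\begin{equation*}
 |\tilde M| \le \|\omega ^{-\frac12}\|\,|M\omega ^{-\frac12}| \le \|\omega ^{-\frac12}\|^2\,|M| = (\lambda_{\min}(\omega))^{-1}|M|,
\end{equation*}
and then square. For the left-hand inequality, I would use the inverse relation $M = \omega ^{\frac12}\tilde M\omega ^{\frac12}$ and argue identically,
\begin{equation*}
 |M| \le \|\omega ^{\frac12}\|^2\,|\tilde M| = \lambda_{\max}(\omega)\,|\tilde M|,
\end{equation*}
which rearranges to $(\lambda_{\max}(\omega))^{-2}|M|^2 \le |\tilde M|^2$. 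Putting the two bounds together gives \eqref{34}.

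There is no real obstacle here; the only thing to be spelled out carefully is the mixed submultiplicativity above (equivalently, the fact that the Frobenius norm is invariant under multiplication by orthogonal matrices). An alternative route, if one prefers to avoid quoting norm inequalities, is to diagonalise $\omega = O^T\Lambda O$ with $O$ orthogonal and $\Lambda = \diag(\lambda_1,\dots,\lambda_n)$, set $\hat M = OMO^T$, and use orthogonal invariance of $|\cdot|$ to write $|\tilde M|^2 = \sum_{i,j}|\hat M_{ij}|^2/(\lambda_i\lambda_j)$; since $(\lambda_{\min}(\omega))^2 \le \lambda_i\lambda_j \le (\lambda_{\max}(\omega))^2$ and $\sum_{i,j}|\hat M_{ij}|^2 = |M|^2$, both bounds in \eqref{34} follow at once.
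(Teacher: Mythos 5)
Your proposal is correct, and your primary argument takes a genuinely different route from the paper's. The paper diagonalises $\omega = CDC^{-1}$ with $C$ orthogonal, uses orthogonal invariance of the Frobenius norm to obtain the exact formula $\big|\tilde{M}\big|^2=\sum_{i,j}\lambda_i^{-1}\lambda_j^{-1}\bigl(\big(C^{-1}MC\big)_{ij}\bigr)^2$, and then bounds each weight $\lambda_i^{-1}\lambda_j^{-1}$ between $\big(\lambda_{\max}(\omega)\big)^{-2}$ and $\big(\lambda_{\min}(\omega)\big)^{-2}$ --- which is precisely the ``alternative route'' you sketch in your last sentence. Your main proof instead rests on the mixed submultiplicativity $|AB|\le \|A\|\,|B|$ and $|AB|\le |A|\,\|B\|$, together with $\big\|\omega^{-\frac12}\big\|=\big(\lambda_{\min}(\omega)\big)^{-1/2}$ and $\big\|\omega^{\frac12}\big\|=\big(\lambda_{\max}(\omega)\big)^{1/2}$, applied once to $\tilde{M}=\omega^{-\frac12}M\omega^{-\frac12}$ for the upper bound and once to the inverted relation $M=\omega^{\frac12}\tilde{M}\omega^{\frac12}$ for the lower bound; both steps are justified and yield the same constants as \eqref{34}. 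What each approach buys: your norm-inequality argument is shorter, requires no spectral decomposition, and generalises immediately to any congruence $X\mapsto S X S$ with $S$ symmetric positive definite; the paper's explicit diagonalisation, on the other hand, produces the weighted-sum identity for $\big|\tilde{M}\big|^2$, in the same spirit as the spectral computations carried out elsewhere in Section 3 (e.g. Propositions \ref{Pro13} and \ref{Pro14}), and makes transparent when the two bounds are attained. Either way the proof is complete.
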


\begin{proof}
Let $\lambda_1, \ldots, \lambda_n$ be the eigenvalues of $\omega,$ where $ \lambda_1 \geq \cdots \geq  \lambda_n >0.$ Write
$$ \omega=CDC^{-1}, $$
where $C$ is orthogonal and $D={\diag} ( \lambda_1, \ldots, \lambda_n ).$ Then
$$ \omega^{-\frac{1}{2}}= CD^{-\frac{1}{2}} C^{-1},\,\,\, D^{-\frac{1}{2}}= {\diag}\Bigl ( \lambda_1 ^{-\frac{1}{2}}, \ldots, \lambda_n ^{-\frac{1}{2}}\Bigl ).  $$
Therefore,
\begin{equation*}
\begin{split}
\big | \tilde{M}\big|^2&=  \Bigl | \omega^{-\frac{1}{2}}M \omega^{-\frac{1}{2}}  \Bigl |^2 = \Bigl | \Bigl ( CD^{-\frac{1}{2}}C^{-1}\Bigl ) M \Bigl ( CD^{-\frac{1}{2}}C^{-1}\Bigl ) \Bigl |^2\\
&=\Bigl | D^{-\frac{1}{2}} \big ( C^{-1}MC \big ) D^{-\frac{1}{2}} \Bigl |^2=\sum_{i,j}  \lambda_i^{-1} \lambda_j^{-1} \bigl (\big ( C^{-1} MC \big )_{ij}\bigl )^2.
\end{split}
\end{equation*}
From this and the fact that $0< \lambda_1^{-1} \leq \lambda_i^{-1} \leq \lambda_n ^{-1} (i=1, \ldots, n),$ we obtain
\begin{equation*}
   \lambda_1^{-2}|M|^2=    \lambda_1^{-2} \sum_{i,j} \bigl (\big (C^{-1}M C \big )_{ij}\bigl )^2 \leq     \big | \tilde{M }\big |^2 \leq \lambda_n^{-2} \sum_{i,j} \bigl (\big (C^{-1}M C \big )_{ij}\bigl )^2 =\lambda_n ^{-2} |M|^2.
\end{equation*}
The proof is completed.
\end{proof}

\begin{proposition}\label {Pro13}
Suppose that $R=\omega +\beta \in D_{\delta, \mu}.$ Then for any symmetric matrix $P\in \mathbb{R}^{n \times n},$ we have
\begin{equation}\label{35}
\mathcal{F}(R,P) = -\sum _{j,k=1}^n \frac {1-  \sigma_j \sigma_k   }{\left ( 1+\sigma_j^2 \right ) \left ( 1+\sigma_k^2 \right )} \left |{\tilde{\tilde P}}_{jk}\right |^2 ,
\end{equation}
where ${\im}\sigma _1, \ldots, {\im}\sigma _n$ are the eigenvalues of the matrix $\sigma$, defined by \eqref{8}.
\end{proposition}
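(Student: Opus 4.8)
The plan is to start from the representation of $\mathcal{F}(R,P)$ obtained in Proposition~\ref{Pro9}, namely $\mathcal{F}(R,P) = -\left[\Tr(R^{-1}P)\right]^2 + 2\Tr\left[(R^{-1})^{(2)}P^{(2)}\right]$, and to diagonalise everything simultaneously using the unitary matrix $C_1$ from \eqref{12}. The key observation is that $R^{-1}P$ does not appear directly; rather, since $P$ is symmetric, $\Tr(R^{-1}P) = \Tr\left(\tfrac{R^{-1}+(R^{-1})^T}{2}\,P\right)$, and by Corollary~\ref{Cor1} this equals $\Tr\left(\omega^{-1/2}C_1 D_2 C_1^{*}\omega^{-1/2}P\right) = \Tr\left(D_2\, C_1^{*}\tilde{P}C_1\right) = \Tr(D_2\tilde{\tilde{P}}) = \sum_j \frac{1}{1+\sigma_j^2}\tilde{\tilde{P}}_{jj}$, using the notation \eqref{32}. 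Squaring gives the first term of \eqref{35} as $-\sum_{j,k}\frac{1}{(1+\sigma_j^2)(1+\sigma_k^2)}\tilde{\tilde{P}}_{jj}\tilde{\tilde{P}}_{kk}$.

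Next I would handle the compound term. Write $R^{-1} = \omega^{-1/2}(E+\sigma)^{-1}\omega^{-1/2}$ from \eqref{9}, and similarly $P$; but it is cleaner to use a congruence. By the Binet--Cauchy theorem (Proposition~\ref{Pro1}(i)) and part (viii), applied to the factorisation $R^{-1} = \omega^{-1/2}C_1(E+D_1)^{-1}C_1^{*}\omega^{-1/2}$ coming from \eqref{9} and \eqref{12}, one gets $(R^{-1})^{(2)} = (\omega^{-1/2})^{(2)}C_1^{(2)}\big((E+D_1)^{-1}\big)^{(2)}(C_1^{*})^{(2)}(\omega^{-1/2})^{(2)}$, where $\big((E+D_1)^{-1}\big)^{(2)} = \diag\!\big(\tfrac{1}{(1+\im\sigma_j)(1+\im\sigma_k)},\ j<k\big)$. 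Then $2\Tr\left[(R^{-1})^{(2)}P^{(2)}\right] = 2\Tr\left[\big((E+D_1)^{-1}\big)^{(2)}(C_1^{*})^{(2)}(\omega^{-1/2})^{(2)}P^{(2)}(\omega^{-1/2})^{(2)}C_1^{(2)}\right]$, and by Binet--Cauchy again the middle block is $(C_1^{*}\omega^{-1/2}P\omega^{-1/2}C_1)^{(2)} = (\tilde{\tilde{P}})^{(2)}$. So this term becomes $2\sum_{j<k}\frac{1}{(1+\im\sigma_j)(1+\im\sigma_k)}\cdot\big(\text{the corresponding off-diagonal minor of }\tilde{\tilde{P}}\big)_{jk,jk}^{\text{-th entry}}$; the $(jk,jk)$ diagonal entry of $(\tilde{\tilde{P}})^{(2)}$ is the $2\times 2$ principal minor $\tilde{\tilde{P}}_{jj}\tilde{\tilde{P}}_{kk} - \tilde{\tilde{P}}_{jk}\tilde{\tilde{P}}_{kj}$, which since $\tilde{\tilde{P}} = C_1^{*}\tilde{P}C_1$ with $\tilde P$ real symmetric and $C_1$ unitary satisfies $\tilde{\tilde{P}}_{kj} = \overline{\tilde{\tilde{P}}_{jk}}$, so this minor is $\tilde{\tilde{P}}_{jj}\tilde{\tilde{P}}_{kk} - |\tilde{\tilde{P}}_{jk}|^2$. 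Also $(1+\im\sigma_j)(1+\im\sigma_k)$ needs to combine with its conjugate; here I would note $\frac{1}{(1+\im\sigma_j)(1+\im\sigma_k)} + \frac{1}{(1-\im\sigma_j)(1-\im\sigma_k)} = \frac{2(1-\sigma_j\sigma_k)}{(1+\sigma_j^2)(1+\sigma_k^2)}$, invoking that the sum over $j<k$ can be symmetrised using $\Tr[(R^{-1})^{(2)}P^{(2)}] = \Tr[((R^{-1})^T)^{(2)}P^{(2)}]$ after averaging (or directly: $(R^{-1})^{(2)}$ is not symmetric, but $P^{(2)}$ is symmetric by Proposition~\ref{Pro1}(vi), so $\Tr[(R^{-1})^{(2)}P^{(2)}] = \Tr[((R^{-1})^{(2)})^T P^{(2)}] = \Tr[((R^{-1})^T)^{(2)}P^{(2)}]$ using (ii)).

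Putting the two pieces together: the compound term contributes $2\sum_{j<k}\frac{1-\sigma_j\sigma_k}{(1+\sigma_j^2)(1+\sigma_k^2)}\big(\tilde{\tilde{P}}_{jj}\tilde{\tilde{P}}_{kk} - |\tilde{\tilde{P}}_{jk}|^2\big)$, and adding the diagonal $j=k$ terms (where $\tilde{\tilde{P}}_{jj}$ is real, $\sigma_j\sigma_j = \sigma_j^2$, so $\frac{1-\sigma_j^2}{(1+\sigma_j^2)^2}\cdot 0$ contributes nothing to the minor part but the $-[\Tr]^2$ expansion does contain $j=k$ terms) reorganises the whole expression. The bookkeeping is: $-\big(\sum_j \frac{\tilde{\tilde{P}}_{jj}}{1+\sigma_j^2}\big)^2 = -\sum_j \frac{\tilde{\tilde{P}}_{jj}^2}{(1+\sigma_j^2)^2} - 2\sum_{j<k}\frac{\tilde{\tilde{P}}_{jj}\tilde{\tilde{P}}_{kk}}{(1+\sigma_j^2)(1+\sigma_k^2)}$; the cross terms $\tilde{\tilde{P}}_{jj}\tilde{\tilde{P}}_{kk}$ cancel against $2\sum_{j<k}\frac{1-\sigma_j\sigma_k}{(1+\sigma_j^2)(1+\sigma_k^2)}\tilde{\tilde{P}}_{jj}\tilde{\tilde{P}}_{kk}$ \emph{only if} $\frac{1-\sigma_j\sigma_k}{(1+\sigma_j^2)(1+\sigma_k^2)} = \frac{1}{(1+\sigma_j^2)(1+\sigma_k^2)}$, which is false — so instead the residual is $2\sum_{j<k}\frac{-\sigma_j\sigma_k}{(1+\sigma_j^2)(1+\sigma_k^2)}\tilde{\tilde{P}}_{jj}\tilde{\tilde{P}}_{kk}$, and combined with $-\sum_j\frac{\sigma_j^2}{(1+\sigma_j^2)^2}\tilde{\tilde{P}}_{jj}^2$ (split off from the diagonal of $-[\Tr]^2$) this assembles into $-\sum_{j,k}\frac{\sigma_j\sigma_k}{(1+\sigma_j^2)(1+\sigma_k^2)}\tilde{\tilde{P}}_{jj}\tilde{\tilde{P}}_{kk}$; meanwhile the diagonal pieces $-\sum_j\frac{1}{(1+\sigma_j^2)^2}\tilde{\tilde{P}}_{jj}^2$ merge with $-2\sum_{j<k}\frac{1-\sigma_j\sigma_k}{(1+\sigma_j^2)(1+\sigma_k^2)}|\tilde{\tilde{P}}_{jk}|^2$ and the $j=k$ term $-\sum_j\frac{1-\sigma_j^2}{(1+\sigma_j^2)^2}|\tilde{\tilde{P}}_{jj}|^2$ to give $-\sum_{j,k}\frac{1-\sigma_j\sigma_k}{(1+\sigma_j^2)(1+\sigma_k^2)}|\tilde{\tilde{P}}_{jk}|^2$. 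The $\tilde{\tilde{P}}_{jj}\tilde{\tilde{P}}_{kk}$-terms must then vanish identically, which one checks by noting $\sum_{j,k}\frac{\sigma_j\sigma_k}{(1+\sigma_j^2)(1+\sigma_k^2)}\tilde{\tilde{P}}_{jj}\tilde{\tilde{P}}_{kk} = \big(\sum_j\frac{\sigma_j}{1+\sigma_j^2}\tilde{\tilde{P}}_{jj}\big)^2$ and that this is absorbed using $\Tr\!\big(\tfrac{R^{-1}-(R^{-1})^T}{2}P\big)$, which equals $\sum_j\frac{-\im\sigma_j}{1+\sigma_j^2}\tilde{\tilde{P}}_{jj}$ by Corollary~\ref{Cor1} and must be purely imaginary while also, being a trace of a real matrix product, it must be real — hence zero, killing precisely those terms. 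I expect the main obstacle to be exactly this last reconciliation: keeping straight the real/imaginary decomposition when the $\im$ from $D_1$ meets the real symmetric $\tilde P$, and making sure the diagonal $j=k$ contributions are accounted for on both sides. The cleanest route is probably to avoid the Proposition~\ref{Pro9} starting point altogether and instead go back to $\mathcal{F}(R,P) = -\sum R^{\ell i}R^{jk}P_{ij}P_{k\ell}$, substitute $R^{-1} = \omega^{-1/2}C_1(E+D_1)^{-1}C_1^{*}\omega^{-1/2}$ directly, change variables to $\tilde{\tilde P}$, and compute the resulting sum $-\sum_{j,k}\frac{1}{(1+\im\sigma_j)(1+\im\sigma_k)}\tilde{\tilde P}_{jk}\tilde{\tilde P}_{kj}$ — wait, this needs care since $(E+D_1)^{-1}$ contributes $\frac{1}{1+\im\sigma_\ell}$ paired with indices, so the honest computation gives $-\sum_{j,k}\frac{1}{(1+\im\sigma_j)(1+\im\sigma_k)}|\tilde{\tilde P}_{jk}|^2$ once one uses $\tilde{\tilde P}_{kj} = \overline{\tilde{\tilde P}_{jk}}$, and then symmetrising in $\sigma \leftrightarrow -\sigma$ (legitimate because $\mathcal F(R,P)$ is real) replaces $\frac{1}{(1+\im\sigma_j)(1+\im\sigma_k)}$ by its real part $\frac{1-\sigma_j\sigma_k}{(1+\sigma_j^2)(1+\sigma_k^2)}$, yielding \eqref{35} directly. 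This is the approach I would actually write up.
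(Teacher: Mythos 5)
Your proposal is correct, and the route you say you would actually write up is genuinely different from (and slicker than) the paper's. The paper stays with the decomposition of Proposition~\ref{Pro9}, uses Proposition~\ref{Pro2} to replace $\bigl(R^{-1}\bigr)^{(2)}$ (after symmetrising against $P^{(2)}$) by the compounds of the real symmetric and skew parts of $R^{-1}$, evaluates those via Corollary~\ref{Cor1} and Binet--Cauchy, and then needs the auxiliary identity $\sum_j \frac{\sigma_j}{1+\sigma_j^2}\tilde{\tilde P}_{jj}=0$ (coming from $\Tr\bigl[\tfrac{R^{-1}-(R^{-1})^T}{2}P\bigr]=0$) to make the $\tilde{\tilde P}_{jj}\tilde{\tilde P}_{kk}$ cross terms cancel against $-[\mathcal G]^2$. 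Your first, longer computation is essentially this same skeleton, only with the compound trace made real by averaging the complex eigenvalue factors $\tfrac{1}{(1+\im\sigma_j)(1+\im\sigma_k)}$ with their conjugates instead of invoking Proposition~\ref{Pro2}; your bookkeeping there is somewhat muddled in the description of how the diagonal $j=k$ pieces are split (as written it reads like a double count of $-\sum_j\tilde{\tilde P}_{jj}^2/(1+\sigma_j^2)^2$), but the intended accounting is right and you correctly identify the same vanishing-trace identity as the crux. The final direct route is cleaner and avoids all of this: from \eqref{25} one has $\mathcal F(R,P)=-\Tr\bigl((R^{-1}P)^2\bigr)$, and substituting $R^{-1}=\omega^{-1/2}C_1(E+D_1)^{-1}C_1^{*}\omega^{-1/2}$ and cycling the trace gives $\mathcal F(R,P)=-\sum_{j,k}\frac{\tilde{\tilde P}_{jk}\tilde{\tilde P}_{kj}}{(1+\im\sigma_j)(1+\im\sigma_k)}=-\sum_{j,k}\frac{|\tilde{\tilde P}_{jk}|^2}{(1+\im\sigma_j)(1+\im\sigma_k)}$ since $\tilde{\tilde P}$ is Hermitian; because $\mathcal F(R,P)$ and each $|\tilde{\tilde P}_{jk}|^2$ are real, equating real parts (equivalently, averaging with the complex conjugate) yields \eqref{35} at once, with no second compounds, no Proposition~\ref{Pro2}, and no cancellation of cross terms. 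What the paper's heavier route buys is reusable structure: the same $\mathcal G$/$\mathcal H$ and compound machinery is deployed again verbatim for the skew case (Proposition~\ref{Pro14}) and feeds Corollary~\ref{Cor3}; what your direct route buys is brevity and far less opportunity for sign or index errors --- and it adapts just as easily to the skew-symmetric case, where $\tilde{\tilde Q}$ is skew-Hermitian and $\tilde{\tilde Q}_{jk}\tilde{\tilde Q}_{kj}=-|\tilde{\tilde Q}_{jk}|^2$ flips the overall sign, recovering \eqref{43}. The only point you must state explicitly in a write-up is the justification for passing to real parts, which you do note.
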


\begin{proof}
Since $P$ is symmetric, by Proposition \ref{Pro1}, $P^{(2)}$ is also symmetric. Hence from \eqref{7}, \eqref{28} and \eqref{29}, we have
\begin{equation} \label{36}
\begin{split}
&\mathcal{F}(R,P)= - \left [ \mathcal{G}(R,P)\right ]^2 +\mathcal{H}(R,P) =- \left [ \mathcal{G}(R,P)\right ]^2 +2\, {\Tr} \left [ \left ( R^{-1} \right )^{(2)}  P^{(2)}\right ] \\
&= -\left [ \mathcal{G}(R,P)\right ]^2 +2\,{\Tr} \left [      \frac {\left ( R^{-1} \right )^{(2) } + \left (\left ( R^{-1} \right )^{T}\right )^{(2)}  }  {2}   P^{(2)}\right ] =-\left [ \mathcal{G}(R,P)\right ]^2 \\
&+2\,{\Tr} \left [ \left ( \frac{ R^{-1}+ \left ( R^{-1}  \right )^T}{2} \right )^{(2)} P^{(2)}  \right ] +2\, {\Tr} \left [  \left ( \frac{ R^{-1} - \left ( R^{-1}  \right )^T}{2} \right )^{(2)} P^{(2)} \right ].
\end{split}
\end{equation}

\noindent It follows from \eqref{19} and \eqref{20} that
\begin{equation} \label{37}
\begin{split}
 \mathcal{G}(R,P) &= {\Tr} \left ( \frac{ R^{-1} +\left ( R^{-1} \right ) ^T }{2}P \right ) ={\Tr} \left ( \omega^{-\frac{1}{2}} C_1 D_2 C_1^*\omega^{-\frac{1}{2}} P  \right ) \\
&={\Tr} \left ( D_2 C_1^*  \omega^{-\frac{1}{2}} P  \omega^{-\frac{1}{2}} C_1 \right ) = {\Tr} \left ( D_2 {\tilde{\tilde P}}\right ) =\sum_{j}\frac{{\tilde{\tilde P}}_{jj} }{1+\sigma_j^2}.
\end{split}
\end{equation}

\noindent Since $P$ is symmetric, $\tilde{\tilde{P}}$ is Hermitian. Hence $\tilde{\tilde{P}}_{jk} \tilde{\tilde{P}}_{kj}= \left | \tilde{\tilde{P}}_{jk} \right |^2,$ $j,k=1, \ldots,n.$ From these equalities, \eqref{19}, \eqref{20} and Proposition \ref{Pro1}, we obtain 
\begin{equation} \label{38}
\begin{split}
&2 \,{\Tr} \left [  \left ( \frac{ R^{-1} + \left ( R^{-1}  \right )^T}{2}   \right )^{(2)} P^{(2)} \right ]=  2 \,{\Tr} \left [  \bigl (  \omega^{-\frac{1}{2}} C_1 D_2 C_1^* \omega^{-\frac{1}{2}} \bigl )^{(2)} P^{(2)} \right ]\\
&= 2 \,{\Tr} \left [ D_2^{(2)}  \left ( C_1^* \right )^{(2)} \bigl ( \omega ^{-\frac{1}{2}}\bigl )^{(2)} P^{(2)} \bigl ( \omega ^{-\frac{1}{2}}\bigl )^{(2)} C_1^{(2)}   \right ]\\
& = 2 \,{\Tr} \left [ D_2 ^{(2)} { \tilde{\tilde{P}}}^{(2)}   \right ] = 2 \sum_{j<k}  \bigl ( D_2 ^{(2)}\bigl )_{jk,jk}{ \tilde{\tilde{P}}}^{(2)}_{jk,jk} \\
&= 2 \sum_{j<k} \frac{\tilde{\tilde{P}}_{jj} \tilde{\tilde{P}}_{kk}- \left | \tilde{\tilde{P}}_{jk} \right |^2 }{\left (1+\sigma_j ^2 \right )  \left (1+\sigma_k ^2 \right )}=\sum_{j\ne k} \frac{\tilde{\tilde{P}}_{jj} \tilde{\tilde{P}}_{kk}}{\left (1+\sigma_j ^2 \right )  \left (1+\sigma_k ^2 \right )}-\sum_{j \ne k}\frac{\left | \tilde{\tilde{P}}_{jk} \right |^2 }{\left (1+\sigma_j ^2 \right )  \left (1+\sigma_k ^2 \right )}\\
&= \sum_{j, k} \frac{\tilde{\tilde{P}}_{jj} \tilde{\tilde{P}}_{kk}}{\left (1+\sigma_j ^2 \right )  \left (1+\sigma_k ^2 \right )}-\sum_{j , k}\frac{\left | \tilde{\tilde{P}}_{jk} \right |^2 }{\left (1+\sigma_j ^2 \right )  \left (1+\sigma_k ^2 \right )}\\
&  = \Biggl (  \sum_{j} \frac{\tilde{\tilde{P}} _{jj}} { 1+ \sigma_j ^2 }   \Biggl )^2 - \sum_{j,k} \frac{ \left | \tilde{\tilde{P}}_{jk} \right |^2 }{\left (1+\sigma_j ^2 \right )  \left (1+\sigma_k ^2 \right ) }.
\end{split}
\end{equation}

\noindent Combining \eqref{37}, \eqref{38} yields
\begin{equation} \label{39}
2\,{\Tr} \left [ \left ( \frac{ R^{-1} + \left ( R^{-1}  \right )^T}{2}   \right )^{(2)} P^{(2)}  \right ] = \left [ \mathcal{G}(R,P) \right ]^2 - \sum_{j,k} \frac{ \left | \tilde{\tilde{P}}_{jk} \right |^2 }{\left (1+\sigma_j ^2 \right )  \left (1+\sigma_k ^2 \right ) }.
\end{equation}

\noindent From \eqref{19}, \eqref{20} and Proposition \ref{Pro1}, we also get
\begin{equation} \label{40}
\begin{split}
&2\,{\Tr} \left [ \left (  \frac{ R^{-1} - \left ( R^{-1}  \right )^T}{2}   \right )^{(2)} P^{(2)}  \right ] = 2\, {\Tr} \left [ \left (  \omega^{-\frac{1}{2} } C_1 D_3 C_1^* \omega^{-\frac{1}{2} }   \right )^{(2 )}  P^{(2)}\right ]\\
&=2 \,{\Tr} \left [ D_3 ^{ (2) } {\tilde{\tilde{P}}}^{(2)}   \right ] =2 \sum_{j<k}  \bigl ( D_3 ^{(2)}\bigl )_{jk,jk}{ \tilde{\tilde{P}}}^{(2)}_{jk,jk}   \\
&= 2 \sum_{j<k} \frac{ -\sigma_j  \sigma_k}{\left (1+\sigma_j ^2 \right )  \left (1+\sigma_k ^2 \right ) } \left (\tilde{\tilde{P}}_{jj} \tilde{\tilde{P}}_{kk}- \left | \tilde{\tilde{P}}_{jk}   \right |^2 \right )\\
&=-\sum_{j \ne k} \frac{ \sigma_j  \sigma_k}{\left (1+\sigma_j ^2 \right )  \left (1+\sigma_k ^2 \right ) } \tilde{\tilde{P}}_{jj}\tilde{ \tilde{P}}_{kk}+ \sum_{j \ne k} \frac{ \sigma_j  \sigma_k}{\left (1+\sigma_j ^2 \right )  \left (1+\sigma_k ^2 \right ) } \left | \tilde{\tilde{P}}_{jk}\right |^2.
\end{split}
\end{equation}

\noindent Obviously, $ {\Tr} \left ( \dfrac{ R^{-1} - \left ( R^{-1} \right ) ^T }{2} P    \right )=0.$ It then follows from \eqref{19} and \eqref{20} that
$$ {\Tr} \bigl ( \omega^{-\frac{1}{2} }C_1 D_3 C_1^* \omega^{-\frac{1}{2}}P \bigl )={\Tr} \left ( D_3 \tilde{\tilde{P}} \right )=\sum_{j} \frac{ -{\im} \sigma_j}{1+\sigma_j^2 } \tilde{\tilde{P}}_{jj}=0, $$
or equivalently,
$$ \Biggl ( \sum_{j} \frac{\sigma_j}{1+\sigma_j^2 } \tilde{\tilde{P}}_{jj} \Biggl ) ^2=0.$$
Hence 
\begin{equation*} 
\sum_{j} \frac{\sigma_j^2 }{ \left ( 1+\sigma_j^2  \right )^2} \left ( \tilde{\tilde{P}}_{jj}\right )^2=- \sum_{j \ne k}\frac{ \sigma_j  \sigma_k}{\left (1+\sigma_j ^2 \right )  \left (1+\sigma_k ^2 \right ) } \tilde{\tilde{P}}_{jj} \tilde{\tilde{P}}_{kk}. 
\end{equation*}
This together with \eqref{40} gives
\begin{equation} \label{41}
2\,{\Tr} \Biggl [ \Biggl (  \frac{ R^{-1} - \big ( R^{-1}  \big )^T}{2}   \Biggl )^{(2)} P^{(2)}  \Biggl ]=\sum_{j ,k} \frac{ \sigma_j  \sigma_k}{\big (1+\sigma_j ^2 \big )  \big (1+\sigma_k ^2 \big ) } \Bigl | \tilde{\tilde{P}}_{jk}\Bigl |^2.
\end{equation}
The proof is straightforward from \eqref{36}, \eqref{39} and \eqref{41}.
\end{proof}

\begin{corollary} \label{Cor3}
Suppose that $R=\omega +\beta \in D_{\delta, \mu}.$ Then for any symmetric matrix $P \in \mathbb{R}^{n \times n},$ we have
\begin{equation}\label{42}
\mathcal{F}(R,P) \leq - \frac { 1-\delta ^2 }{ \left ( 1+\delta ^2 \right )^2} \big |\tilde{P} \big |^2 \leq  - \frac { 1-\delta ^2 }{ \left ( 1+\delta ^2 \right )^2} \big ( \lambda_{\max} (\omega) \big )^{-2} |P|^2 .
\end{equation}
\end{corollary}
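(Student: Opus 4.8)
The plan is to derive \eqref{42} directly from the exact formula for $\mathcal{F}(R,P)$ obtained in Proposition \ref{Pro13}, by estimating each coefficient of the resulting quadratic form from below by one single constant depending only on $\delta$. Recall from Proposition \ref{Pro13} that
\[
\mathcal{F}(R,P) = -\sum_{j,k=1}^n \frac{1-\sigma_j\sigma_k}{\left(1+\sigma_j^2\right)\left(1+\sigma_k^2\right)}\left|\tilde{\tilde{P}}_{jk}\right|^2 ,
\]
where ${\im}\sigma_1,\ldots,{\im}\sigma_n$ are the eigenvalues of $\sigma$ from \eqref{8}. By Proposition \ref{Pro4}(ii) we have $|\sigma_j|\leq\delta<1$ for every $j$, hence $|\sigma_j\sigma_k|\leq\delta^2$ and so the numerator satisfies $1-\sigma_j\sigma_k\geq 1-\delta^2\geq 0$, while the denominator satisfies $0<\left(1+\sigma_j^2\right)\left(1+\sigma_k^2\right)\leq\left(1+\delta^2\right)^2$. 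Since the numerator is nonnegative and the denominator positive, each coefficient is bounded below by $\dfrac{1-\delta^2}{\left(1+\delta^2\right)^2}\geq 0$.

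Now, because all these coefficients exceed $\dfrac{1-\delta^2}{\left(1+\delta^2\right)^2}$ and the weights $\left|\tilde{\tilde{P}}_{jk}\right|^2$ are nonnegative, the negated sum is bounded above:
\[
\mathcal{F}(R,P)\leq -\frac{1-\delta^2}{\left(1+\delta^2\right)^2}\sum_{j,k=1}^n\left|\tilde{\tilde{P}}_{jk}\right|^2 = -\frac{1-\delta^2}{\left(1+\delta^2\right)^2}\bigl|\tilde{\tilde{P}}\bigr|^2 ,
\]
where the last equality is just the definition of the Frobenius norm. Invoking \eqref{33}, which gives $\bigl|\tilde{\tilde{P}}\bigr|=\bigl|\tilde{P}\bigr|$, yields the first inequality of \eqref{42}.

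For the second inequality I would apply Proposition \ref{Pro12} with $M=P$, which gives $\bigl|\tilde{P}\bigr|^2\geq\bigl(\lambda_{\max}(\omega)\bigr)^{-2}|P|^2$. Multiplying this by the nonnegative constant $\dfrac{1-\delta^2}{\left(1+\delta^2\right)^2}$ and then negating reverses the inequality, producing $-\dfrac{1-\delta^2}{\left(1+\delta^2\right)^2}\bigl|\tilde{P}\bigr|^2\leq -\dfrac{1-\delta^2}{\left(1+\delta^2\right)^2}\bigl(\lambda_{\max}(\omega)\bigr)^{-2}|P|^2$, which completes the proof.

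I expect no genuine obstacle here; the one point that requires care is sign bookkeeping, since we are producing an \emph{upper} bound for a nonpositive quantity, so \emph{lower} bounds on the positive coefficients and on $\bigl|\tilde{P}\bigr|^2$ become the desired bounds only after the negation. It is also worth noting that in the degenerate case $\delta=0$ one has $\beta=0$, $\sigma=0$ and the constant equals $1$, so \eqref{42} reduces to the classical strict concavity estimate for $\log\det$ on symmetric positive definite matrices.
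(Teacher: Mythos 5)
Your proof is correct and follows essentially the same route as the paper: both start from the exact formula of Proposition \ref{Pro13}, bound each coefficient below by $\tfrac{1-\delta^2}{(1+\delta^2)^2}$ using $|\sigma_j|\leq\delta$ from Proposition \ref{Pro4}, pass to $\big|\tilde{P}\big|$ via \eqref{33}, and then invoke Proposition \ref{Pro12} for the second inequality. No gaps; the sign bookkeeping you flag is handled correctly.
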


\begin{proof}
By Proposition \ref{Pro4}, we have $|\sigma _j| \leq \delta <1,j=1, \ldots, n.$ Hence from \eqref{33} and \eqref{35}, we obtain
\begin{equation*}
\begin{split}
\mathcal{F}(R,P) &= -\sum_{j,k} \frac{1-\sigma_j \sigma_k } { \left (1+\sigma_j^2 \right ) \left (1+\sigma_k^2 \right ) }\left | \tilde{\tilde{P}}_{jk}\right |^2 \leq -\sum_{j,k} \frac{1- |\sigma_j |\, |\sigma_k| } { \left (1+\sigma_j^2 \right ) \left (1+\sigma_k^2 \right ) }\left | \tilde{\tilde{P}}_{jk}\right |^2\\
&\leq - \frac{1-\delta ^2} { \left (1+\delta ^2 \right )^2}\sum_{j,k }\left | \tilde{\tilde{P}}_{jk}\right |^2  = - \frac {1-\delta ^2}{\left ( 1+\delta ^2 \right )^2} \Bigl |\tilde{\tilde{P}}\Bigl |^2 =- \frac {1-\delta ^2}{\left ( 1+\delta ^2 \right )^2} \big |\tilde{P}\big |^2.
\end{split}
\end{equation*}
Thus we get the first inequality in \eqref{42}. Combining this with Proposition \ref{Pro12}, we can easily obtain the second inequality in \eqref{42}.
\end{proof}

\begin{proposition}\label {Pro14}
Suppose that $R=\omega +\beta \in D_{\delta, \mu}$. Then for any skew-symmetric matrix $Q\in \mathbb{R}^{n \times n},$ we have
\begin{equation}\label{43}
\mathcal{F}(R,Q) = \sum _{j,k=1}^n \frac {1-\sigma_j \sigma_k}{\left ( 1+\sigma_j^2 \right ) \left ( 1+\sigma_k^2 \right )} \left |\tilde{\tilde{Q}}_{jk}\right |^2 .
\end{equation}
\end{proposition}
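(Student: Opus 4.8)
The plan is to follow the proof of Proposition~\ref{Pro13} essentially verbatim, taking care of the sign changes produced by the skew-symmetry of $Q$. Since $Q^T=-Q$, Proposition~\ref{Pro1}(vi) shows that $Q^{(2)}$ is symmetric, so starting from Proposition~\ref{Pro10} one may argue exactly as in \eqref{36}: symmetrize $(R^{-1})^{(2)}$ inside the trace $2\,{\Tr}[(R^{-1})^{(2)}Q^{(2)}]$, apply Proposition~\ref{Pro2}, and split the result into the two compound traces $2\,{\Tr}[(\tfrac{R^{-1}+(R^{-1})^T}{2})^{(2)}Q^{(2)}]$ and $2\,{\Tr}[(\tfrac{R^{-1}-(R^{-1})^T}{2})^{(2)}Q^{(2)}]$. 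Applying Corollary~\ref{Cor1}, the Binet--Cauchy theorem, parts (iv) and (viii) of Proposition~\ref{Pro1}, and the notation \eqref{32}, these reduce to $2\,{\Tr}[D_2^{(2)}\tilde{\tilde{Q}}^{(2)}]$ and $2\,{\Tr}[D_3^{(2)}\tilde{\tilde{Q}}^{(2)}]$ respectively, where $\tilde{\tilde{Q}}^{(2)}_{jk,jk}=\tilde{\tilde{Q}}_{jj}\tilde{\tilde{Q}}_{kk}-\tilde{\tilde{Q}}_{jk}\tilde{\tilde{Q}}_{kj}$.

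The structural input replacing the Hermitian symmetry of $\tilde{\tilde{P}}$ used in Proposition~\ref{Pro13} is that $\tilde{Q}=\omega^{-\frac12}Q\omega^{-\frac12}$ is real skew-symmetric, hence $\tilde{\tilde{Q}}=C_1^{*}\tilde{Q}C_1$ is skew-Hermitian. Thus $\tilde{\tilde{Q}}_{jj}$ is purely imaginary, so $\tilde{\tilde{Q}}_{jj}^2=-|\tilde{\tilde{Q}}_{jj}|^2$, and $\tilde{\tilde{Q}}_{kj}=-\overline{\tilde{\tilde{Q}}_{jk}}$, so $\tilde{\tilde{Q}}_{jk}\tilde{\tilde{Q}}_{kj}=-|\tilde{\tilde{Q}}_{jk}|^2$. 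I would then record two auxiliary identities. First, the trace of a symmetric matrix times a skew-symmetric one vanishes, so ${\Tr}\bigl(\tfrac{R^{-1}+(R^{-1})^T}{2}Q\bigr)=0$; by Corollary~\ref{Cor1} this says $\sum_j\tfrac{\tilde{\tilde{Q}}_{jj}}{1+\sigma_j^2}=0$, whence $\bigl(\sum_j\tfrac{\tilde{\tilde{Q}}_{jj}}{1+\sigma_j^2}\bigr)^2=0$; this replaces the vanishing relation used to obtain \eqref{41}. Second, for the same reason $\mathcal{G}(R,Q)={\Tr}(R^{-1}Q)={\Tr}\bigl(\tfrac{R^{-1}-(R^{-1})^T}{2}Q\bigr)=\sum_j\tfrac{-\im\sigma_j}{1+\sigma_j^2}\tilde{\tilde{Q}}_{jj}$, so that $[\mathcal{G}(R,Q)]^2=-\bigl(\sum_j\tfrac{\sigma_j\tilde{\tilde{Q}}_{jj}}{1+\sigma_j^2}\bigr)^2$; this is the analogue of \eqref{37}.

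Assembling the pieces: expanding $2\,{\Tr}[D_2^{(2)}\tilde{\tilde{Q}}^{(2)}]=2\sum_{j<k}\tfrac{\tilde{\tilde{Q}}_{jj}\tilde{\tilde{Q}}_{kk}+|\tilde{\tilde{Q}}_{jk}|^2}{(1+\sigma_j^2)(1+\sigma_k^2)}$ via Proposition~\ref{Pro1}(viii), passing from $2\sum_{j<k}$ to $\sum_{j\ne k}$, using $\sum_j\tfrac{\tilde{\tilde{Q}}_{jj}}{1+\sigma_j^2}=0$ to remove the $\tilde{\tilde{Q}}_{jj}\tilde{\tilde{Q}}_{kk}$ cross terms and $\tilde{\tilde{Q}}_{jj}^2=-|\tilde{\tilde{Q}}_{jj}|^2$ to restore the diagonal, one gets $2\,{\Tr}[D_2^{(2)}\tilde{\tilde{Q}}^{(2)}]=\sum_{j,k}\tfrac{|\tilde{\tilde{Q}}_{jk}|^2}{(1+\sigma_j^2)(1+\sigma_k^2)}$. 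Likewise, using the expression above for $[\mathcal{G}(R,Q)]^2$, one gets $2\,{\Tr}[D_3^{(2)}\tilde{\tilde{Q}}^{(2)}]=[\mathcal{G}(R,Q)]^2-\sum_{j,k}\tfrac{\sigma_j\sigma_k|\tilde{\tilde{Q}}_{jk}|^2}{(1+\sigma_j^2)(1+\sigma_k^2)}$, the spurious diagonal terms $\sum_j\tfrac{\sigma_j^2|\tilde{\tilde{Q}}_{jj}|^2}{(1+\sigma_j^2)^2}$ cancelling. Substituting these two identities and $-[\mathcal{G}(R,Q)]^2$ into the formula of Proposition~\ref{Pro10} yields \eqref{43}. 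The only delicate point is this sign bookkeeping forced by the skew-Hermitian structure: one must check that the $[\mathcal{G}(R,Q)]^2$ produced by the $D_3$-piece exactly cancels the $-[\mathcal{G}(R,Q)]^2$ term, and that the leftover diagonal contributions vanish, precisely as their counterparts did in the proof of Proposition~\ref{Pro13}.
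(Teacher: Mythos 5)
Your proposal is correct and follows essentially the same route as the paper's own proof: the same splitting of $\mathcal{H}(R,Q)$ via Proposition~\ref{Pro2} into the $D_2$- and $D_3$-compound traces, the same exploitation of the skew-Hermitian structure of $\tilde{\tilde{Q}}$ (so $\tilde{\tilde{Q}}_{jk}\tilde{\tilde{Q}}_{kj}=-|\tilde{\tilde{Q}}_{jk}|^2$ and the diagonal terms drop out), and the same two identities $\sum_j\tilde{\tilde{Q}}_{jj}/(1+\sigma_j^2)=0$ and $\mathcal{G}(R,Q)=\sum_j(-\im\sigma_j)\tilde{\tilde{Q}}_{jj}/(1+\sigma_j^2)$, leading to the cancellation of $[\mathcal{G}(R,Q)]^2$ exactly as in \eqref{44}--\eqref{49}. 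No gaps.
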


\begin{proof}
Since $Q$ is skew-symmetric, by Proposition \ref{Pro1}, $Q^{(2)}$ is symmetric. By arguing as in \eqref{36}, we obtain from \eqref{30},
\begin{equation}\label{44}
\begin{split}
&\mathcal{F}(R,Q)= - \left [ \mathcal{G}(R,Q) \right ]^2 +\mathcal{H}(R,Q) =-\left [ \mathcal{G}(R,Q) \right ]^2 \\
&+2 \,{\Tr} \left [ \left ( \frac{ R^{-1} + \left ( R^{-1}  \right )^T}{2}   \right )^{(2)} Q^{(2)}  \right ] +2\, {\Tr} \left [  \left ( \frac{ R^{-1} - \left ( R^{-1}  \right )^T}{2}   \right )^{(2)} Q^{(2)} \right ].
\end{split}
\end{equation}
It follows from \eqref{19} and \eqref{20} that
\begin{equation}\label{45}
\begin{split}
\mathcal{G}(R,Q)&= {\Tr} \left ( \frac{ R^{-1} -\left ( R^{-1} \right ) ^T }{2} Q \right ) ={\Tr} \left ( \omega^{-\frac{1}{2}} C_1 D_3 C_1^*\omega^{-\frac{1}{2}} Q  \right )\\
&={\Tr} \left ( D_3 C_1^*  \omega^{-\frac{1}{2}} Q  \omega^{-\frac{1}{2}} C_1 \right ) = {\Tr} \left ( D_3 \tilde{\tilde{Q}} \right  ) =\sum_{j}\frac{ -{\im} \sigma_j}{1+\sigma_j^2} \tilde{\tilde{Q}}_{jj}   .
\end{split}
\end{equation}
Since $Q$ is skew-symmetric, $\tilde{\tilde{Q}}$ is skew-Hermitian. Hence $\tilde{\tilde{Q}}_{jk} \tilde{\tilde{Q}}_{kj}= -\left | \tilde{\tilde{Q}}_{jk} \right |^2,$ $j,k=1, \ldots,n.$ From these equalities, \eqref{19}, \eqref{20} and Proposition \ref{Pro1}, we obtain
\begin{equation}\label{46}
\begin{split}
2\, {\Tr}& \left [  \left ( \frac{ R^{-1} - \left ( R^{-1}  \right )^T}{2}   \right )^{(2)} Q^{(2)} \right ]=  2 \,{\Tr} \left [  \left (  \omega^{-\frac{1}{2}} C_1 D_3 C_1^*\omega^{-\frac{1}{2}}     \right )^{(2)} Q^{(2)} \right ]\\
&= 2\, {\Tr} \left [ D_3^{(2)} { \tilde{\tilde{Q}}}^{(2)}   \right ] = 2 \sum_{j<k}  \bigl ( D_3 ^{(2)}\bigl )_{jk,jk}{ \tilde{\tilde{Q}}}^{(2)}_{jk,jk}         \\
&= 2 \sum_{j<k} \frac{- \sigma_j \sigma_k    }{\left (1+\sigma_j ^2 \right )  \left (1+\sigma_k ^2 \right )} \left ( \tilde{\tilde{Q}}_{jj} \tilde{\tilde{Q}}_{kk}+\left | \tilde{\tilde{Q}}_{jk} \right |^2  \right )\\
&= \sum_{j\ne k} \frac{- \sigma_j \sigma_k    }{\left  (1+\sigma_j ^2 \right )  \left (1+\sigma_k ^2 \right )} \tilde{\tilde{Q}}_{jj} \tilde{\tilde{Q}}_{kk } +\sum_{j\ne k} \frac{- \sigma_j \sigma_k    }{\left (1+\sigma_j ^2 \right )  \left (1+\sigma_k ^2 \right )} \left | \tilde{\tilde{Q}}_{jk} \right |^2\\
&= \sum_{j, k} \frac{- \sigma_j \sigma_k    }{\left (1+\sigma_j ^2 \right )  \left (1+\sigma_k ^2 \right )} \tilde{\tilde{Q}}_{jj} \tilde{\tilde{Q}}_{kk }+\sum_{j, k} \frac{- \sigma_j \sigma_k    }{\left (1+\sigma_j ^2 \right )  \left (1+\sigma_k ^2 \right )} \left | \tilde{\tilde{Q}}_{jk} \right |^2.
\end{split}
\end{equation}
Combining \eqref{45}, \eqref{46} gives
\begin{equation}\label{47}
2\,{\Tr} \left [ \left ( \frac{ R^{-1} - \left ( R^{-1}  \right )^T}{2}   \right )^{(2)} Q^{(2)}  \right ] = \left [ \mathcal{G}(R,Q)\right ]^2 + \sum_{j,k} \frac{  -\sigma_j \sigma_k}{\left (1+\sigma_j ^2 \right )  \left (1+\sigma_k ^2 \right ) } \left | \tilde{\tilde{Q}}_{jk} \right |^2.
\end{equation}
From the equalities $\tilde{\tilde{Q}}_{jk} \tilde{\tilde{Q}}_{kj}= -\left | \tilde{\tilde{Q}}_{jk} \right |^2$ $(j,k=1, \ldots,n)$, \eqref{19}, \eqref{20} and Proposition \ref{Pro1}, we also get
\begin{equation}\label{48}
\begin{split}
2\,{\Tr} &\left [ \left (  \frac{ R^{-1} + \left ( R^{-1}  \right )^T}{2}   \right )^{(2)} Q^{(2)}  \right ] = 2\, {\Tr} \left [ \left (  \omega^{-\frac{1}{2} } C_1 D_2 C_1^* \omega^{-\frac{1}{2} }   \right )^{(2 )}  Q^{(2)}\right ]\\
&=2 \,{\Tr} \left [D_2 ^{ (2) } {\tilde{\tilde{Q}}}^{(2)}   \right ] = 2 \sum_{j<k} \frac{ 1}{\left (1+\sigma_j ^2 \right )  \left (1+\sigma_k ^2 \right ) } \left (\tilde{\tilde{Q}}_{jj} \tilde{\tilde{Q}}_{kk}+ \left | \tilde{\tilde{Q}}_{jk}   \right |^2 \right )\\
&=\sum_{j \ne k} \frac{1}{\left (1+\sigma_j ^2 \right )  \left (1+\sigma_k ^2 \right ) } \tilde{\tilde{Q}}_{jj} \tilde{\tilde{Q}}_{kk}+ \sum_{j \ne k} \frac{ 1}{\left (1+\sigma_j ^2 \right )  \left (1+\sigma_k ^2 \right ) } \left | \tilde{\tilde{Q}}_{jk}\right |^2\\
&=\sum_{j ,k} \frac{1}{\left (1+\sigma_j ^2 \right )  \left (1+\sigma_k ^2 \right ) } \tilde{\tilde{Q}}_{jj} \tilde{\tilde{Q}}_{kk}+ \sum_{j ,k} \frac{ 1}{\left (1+\sigma_j ^2 \right )  \left (1+\sigma_k ^2 \right ) } \left | \tilde{\tilde{Q}}_{jk}\right |^2\\
&= \Biggl ( \sum_{j} \frac{\tilde{\tilde{Q}}_{jj}}{1+\sigma_j^2 }\Biggl )^2+\sum_{j,k} \frac{1}{ \left ( 1+\sigma_j^2\right ) \left ( 1+\sigma_k^2 \right ) } \left | \tilde{\tilde{Q}}_{jk}\right |^2.
\end{split}
\end{equation}
 
\noindent Obviously, $ {\Tr} \left ( \dfrac{ R^{-1} + \left  ( R^{-1} \right ) ^T }{2} Q    \right )=0.$ It follows from this, \eqref{19} and \eqref{20} that
$$ {\Tr} \left ( \omega^{-\frac{1}{2} }C_1 D_2 C_1^* \omega^{-\frac{1}{2}}Q   \right )={\Tr} \left ( D_2 \tilde{\tilde{Q}} \right )=\sum_{j} \frac{\tilde{\tilde{Q}}_{jj}}{1+\sigma_j^2 } =0. $$
Combining this and \eqref{48} gives
\begin{equation}\label{49}
2\,{\Tr} \left [ \left (  \frac{ R^{-1} + \left ( R^{-1}  \right )^T}{2}   \right )^{(2)} Q^{(2)}  \right ]=\sum_{j,k} \frac{1}{ \left (1+\sigma_j^2 \right ) \left (1+\sigma_k^2 \right ) } \left | \tilde{\tilde{Q}}_{jk}\right |^2.
\end{equation}
The proof is straightforward from \eqref{44}, \eqref{47} and \eqref{49}.
\end{proof}

\begin{corollary} \label{Cor4}
Suppose that $R=\omega +\beta \in D_{\delta, \mu}$. Then for any skew-symmetric matrix $Q \in \mathbb{R}^{n \times n},$ we have
\begin{equation}\label{50}
\mathcal{F}(R,Q) \leq \big | \tilde{Q} \big |^2.
\end{equation}
\end{corollary}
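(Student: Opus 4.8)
The plan is to read the estimate off directly from the exact formula \eqref{43} established in Proposition \ref{Pro14}, namely
\begin{equation*}
\mathcal{F}(R,Q) = \sum_{j,k=1}^n \frac{1-\sigma_j\sigma_k}{\left(1+\sigma_j^2\right)\left(1+\sigma_k^2\right)}\left|\tilde{\tilde{Q}}_{jk}\right|^2,
\end{equation*}
combined with the identity $\big|\tilde{Q}\big|^2 = \big|\tilde{\tilde{Q}}\big|^2 = \sum_{j,k}\big|\tilde{\tilde{Q}}_{jk}\big|^2$ coming from \eqref{33}. Since each summand $\big|\tilde{\tilde{Q}}_{jk}\big|^2$ is nonnegative, it is enough to show that every coefficient satisfies
\begin{equation*}
\frac{1-\sigma_j\sigma_k}{\left(1+\sigma_j^2\right)\left(1+\sigma_k^2\right)} \leq 1, \qquad j,k=1,\ldots,n,
\end{equation*}
and then sum.

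The one nontrivial step is this elementary inequality. Because $\left(1+\sigma_j^2\right)\left(1+\sigma_k^2\right)>0$, clearing denominators reduces it to $1-\sigma_j\sigma_k \leq 1+\sigma_j^2+\sigma_k^2+\sigma_j^2\sigma_k^2$, i.e.\ to $\sigma_j^2+\sigma_j\sigma_k+\sigma_k^2+\sigma_j^2\sigma_k^2 \geq 0$. I would verify this by completing the square, $\sigma_j^2+\sigma_j\sigma_k+\sigma_k^2 = \left(\sigma_j+\tfrac12\sigma_k\right)^2+\tfrac34\sigma_k^2 \geq 0$, together with $\sigma_j^2\sigma_k^2\geq 0$. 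Note that the hypothesis $R\in D_{\delta,\mu}$ is not actually needed for this coefficientwise bound, although Proposition \ref{Pro4} does give $|\sigma_j|\leq\delta<1$, which in particular makes every coefficient strictly positive.

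Having the pointwise bound, I would conclude $\mathcal{F}(R,Q)\leq\sum_{j,k}\big|\tilde{\tilde{Q}}_{jk}\big|^2 = \big|\tilde{\tilde{Q}}\big|^2$, and then invoke \eqref{33} once more to rewrite the right-hand side as $\big|\tilde{Q}\big|^2$, which is exactly \eqref{50}. I do not expect any genuine obstacle: the argument is a coefficientwise comparison against the explicit formula \eqref{43}, and the only substantive content is the one-line quadratic inequality above.
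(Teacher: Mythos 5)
Your proposal is correct and follows essentially the same route as the paper: apply the exact formula \eqref{43}, bound each coefficient $\frac{1-\sigma_j\sigma_k}{(1+\sigma_j^2)(1+\sigma_k^2)}$ by $1$, and convert $\bigl|\tilde{\tilde{Q}}\bigr|^2$ to $\bigl|\tilde{Q}\bigr|^2$ via \eqref{33}. The only difference is cosmetic: the paper checks the coefficient bound through $1-\sigma_j\sigma_k\leq 1+|\sigma_j||\sigma_k|\leq (1+\sigma_j^2)(1+\sigma_k^2)$, whereas you clear denominators and complete the square, which is equally valid.
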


\begin{proof}
Note that
$$ \frac{1-\sigma_j \sigma_k } { \left ( 1+\sigma_j ^2 \right ) \left ( 1+\sigma_k^2 \right )} \leq \frac{1 +|\sigma_j| |\sigma_k|} { \left ( 1+\sigma_j ^2 \right ) \left ( 1+\sigma_k^2 \right ) } \leq 1, \  j,k =1, \ldots,n .$$
From this, \eqref{33} and \eqref{43}, we obtain that
$$ \mathcal{F}(R,Q) \leq \sum_{j,k} \left | \tilde{\tilde{Q}}_{jk} \right |^2 = \left | \tilde{\tilde{Q}}\right |^2 = \big | \tilde{Q}\big |^2. $$
This completes the proof.
\end{proof}

\begin{proposition} \label{Pro15}
Suppose that $R=\omega +\beta \in D_{\delta, \mu}.$ Then for any symmetric matrix $P\in \mathbb{R}^{n \times n}$ and any skew-symmetric matrix $Q\in \mathbb{R}^{n \times n},$ we have
\begin{equation}\label{51}
|\mathcal{L}(R,P,Q)| \leq \frac{2n \delta}{1+\delta^2 } \big | \tilde{P}\big | \big |\tilde{Q}\big |.
\end{equation}
\end{proposition}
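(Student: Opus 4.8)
The plan is to reduce $\mathcal{L}(R,P,Q)$ to a simple bilinear expression in the diagonalised data and then estimate it entrywise, in the same spirit as the proofs of Propositions~\ref{Pro13} and~\ref{Pro14}. First I would start from the formula of Proposition~\ref{Pro11},
\[
\mathcal{L}(R,P,Q)=-\frac12\,\Tr\left[\left(R^{-1}-(R^{-1})^T\right)P\left(R^{-1}+(R^{-1})^T\right)Q\right],
\]
and substitute the diagonalised expressions of Corollary~\ref{Cor1}, i.e. $R^{-1}-(R^{-1})^T=2\,\omega^{-\frac12}C_1D_3C_1^{*}\omega^{-\frac12}$ and $R^{-1}+(R^{-1})^T=2\,\omega^{-\frac12}C_1D_2C_1^{*}\omega^{-\frac12}$. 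Using the cyclic invariance of the trace together with the notation $\tilde{P},\tilde{Q},\tilde{\tilde{P}},\tilde{\tilde{Q}}$ from \eqref{32}, the four factors of $\omega^{-\frac12}$ and the unitaries $C_1,C_1^{*}$ cancel in pairs, and the expression collapses to
\[
\mathcal{L}(R,P,Q)=-2\,\Tr\left[D_3\,\tilde{\tilde{P}}\,D_2\,\tilde{\tilde{Q}}\right].
\]

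Since $D_2$ and $D_3$ are the diagonal matrices written out in \eqref{20}, expanding this trace gives
\[
\mathcal{L}(R,P,Q)=2\im\sum_{j,k=1}^{n}\frac{\sigma_j}{\left(1+\sigma_j^2\right)\left(1+\sigma_k^2\right)}\,\tilde{\tilde{P}}_{jk}\,\tilde{\tilde{Q}}_{kj}.
\]
Next I would bound the coefficients: by Proposition~\ref{Pro4} one has $|\sigma_j|\le\delta<1$ for every $j$, and since $t\mapsto t/(1+t^2)$ is increasing on $[0,1]$ while $1/(1+\sigma_k^2)\le1$, the coefficient of $\tilde{\tilde{P}}_{jk}\tilde{\tilde{Q}}_{kj}$ has modulus at most $\delta/(1+\delta^2)$. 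Hence
\[
|\mathcal{L}(R,P,Q)|\le\frac{2\delta}{1+\delta^2}\sum_{j,k=1}^{n}\bigl|\tilde{\tilde{P}}_{jk}\bigr|\,\bigl|\tilde{\tilde{Q}}_{kj}\bigr|.
\]

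Finally, for each fixed $j$ the Cauchy--Schwarz inequality gives $\sum_{k}|\tilde{\tilde{P}}_{jk}|\,|\tilde{\tilde{Q}}_{kj}|\le\bigl(\sum_{k}|\tilde{\tilde{P}}_{jk}|^2\bigr)^{1/2}\bigl(\sum_{k}|\tilde{\tilde{Q}}_{kj}|^2\bigr)^{1/2}\le\bigl|\tilde{\tilde{P}}\bigr|\,\bigl|\tilde{\tilde{Q}}\bigr|$, because a single row of $\tilde{\tilde{P}}$ and a single column of $\tilde{\tilde{Q}}$ each have Frobenius norm no larger than the whole matrix. Summing over the $n$ values of $j$ and then invoking \eqref{33} to replace $|\tilde{\tilde{P}}|,|\tilde{\tilde{Q}}|$ by $|\tilde{P}|,|\tilde{Q}|$ yields exactly \eqref{51}. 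The only point requiring care is carrying out the cyclic rearrangements correctly so that the reduction to $\Tr[D_3\tilde{\tilde{P}}D_2\tilde{\tilde{Q}}]$ is valid; after that it is the same entrywise bookkeeping already used above, and the factor $n$ is merely the mildly lossy price of the row-by-row Cauchy--Schwarz step.
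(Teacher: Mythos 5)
Your proof is correct and follows essentially the same route as the paper: both start from Proposition \ref{Pro11}, insert the $\sigma$-diagonalisation (the paper via \eqref{17}, you via Corollary \ref{Cor1} and the reduction to $-2\,\Tr\bigl[D_3\tilde{\tilde{P}}D_2\tilde{\tilde{Q}}\bigr]$), and use $|\sigma_j|\le\delta<1$ to bound the coefficients by $\delta/(1+\delta^2)$. The only difference is the last step, where the paper picks up the factor $n$ from the Frobenius bounds $\bigl|\sigma(E-\sigma^2)^{-1}\bigr|\le\sqrt{n}\,\delta/(1+\delta^2)$ and $\bigl|(E-\sigma^2)^{-1}\bigr|\le\sqrt{n}$, while you pick it up from the row-by-row Cauchy--Schwarz estimate; both are valid and give \eqref{51}.
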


\begin{proof}
By \eqref{17} and \eqref{31}, we have
\begin{equation*}
\begin{split}
\mathcal{L}(R,P,&Q) = -2\, {\Tr} \left [  \frac{\left ( R^{-1} \right ) - \left ( R^{-1}\right ) ^T }{2} P \frac{  \left ( R^{-1} \right )+  \left ( R^{-1}\right ) ^T }{2} Q    \right ]\\
&= -2\, {\Tr} \left [ \left( \omega^{-\frac{1}{2}} (-\sigma) \left ( E-\sigma ^2 \right )^{-1} \omega^{-\frac{1}{2}} \right ) P \left( \omega^{-\frac{1}{2}} \left ( E-\sigma ^2 \right )^{-1} \omega^{-\frac{1}{2}} \right )  Q      \right ]\\
&=2\, {\Tr} \left [ \sigma \left ( E-\sigma ^2 \right )^{-1} \left ( \omega^{-\frac{1}{2}} P \omega^{-\frac{1}{2}}  \right ) \left ( E-\sigma ^2 \right )^{-1}  \left ( \omega^{-\frac{1}{2}} Q \omega^{-\frac{1}{2}}    \right )  \right ]\\
&= 2\, {\Tr} \left [\sigma \left ( E- \sigma ^2 \right )^{-1} \tilde{P} \left ( E-\sigma ^2 \right )^{-1} \tilde{Q}  \right ].
\end{split}
\end{equation*}
Hence
\begin{equation} \label{52}
 | \mathcal{L} (R,P,Q ) | \leq 2 \left | \sigma \left ( E-\sigma^2 \right )^{-1}\right | \big | (E-\sigma^2 )^{-1} \big |  \big | \tilde{P}\big | \big |\tilde{Q}\big | .
\end{equation}
From \eqref{10}, \eqref{12} and Proposition \ref{Pro4}, we can easily obtain
\begin{equation*}
\begin{split}
\left | \sigma  \left ( E-\sigma^2 \right )^{-1}   \right |& =\left | D_1  \left ( E- D_1 ^2 \right )^{-1}   \right |= \left (\sum_j  \frac{ \sigma_j^2}{\left ( 1+\sigma_j^2   \right ) ^2}  \right )^{1/2} \leq \frac {\sqrt{n}\, \delta}{1+\delta^2 },\\
\left | (E-\sigma^2 )^{-1} \right | &=  \left | \left ( E-D_1^2 \right )^{-1} \right |=\left ( \sum_j \frac{1} {\left ( 1+\sigma_j^2   \right ) ^2}  \right )^{1/2} \leq \sqrt{n}.
\end{split}
\end{equation*}
Combining these estimates with \eqref{52}, we get the desired estimate \eqref{51}.
\end{proof}

In the next theorem we will give an upper estimate for second-order differentials of the function $F(R).$
\begin{theorem}\label{Theo1}
Suppose that $R=\omega +\beta \in D_{\delta, \mu}.$ Then for any matrix $M = P + Q,$ where $P\in \mathbb{R}^{n \times n}$ is symmetric and $Q \in \mathbb{R}^{n \times n}$ is skew-symmetric, we have
\begin{equation}\label{53}
\mathcal{F}(R,M)  \leq - (1-\eta)  \frac {1-\delta ^2 }{\left ( 1+\delta ^2 \right )^2}\big | \tilde{P} \big |^2 + \left (1 + \frac{ 4 n^2 \delta ^2}{ \eta \left (1-\delta ^2 \right )} \right ) \big | \tilde{Q}\big |^2 ,
\end{equation}
for any constant $\eta \in (0, 1],$ where $\tilde{P}=\omega ^{-\frac {1}{2}}P\omega ^{-\frac {1}{2}}, \tilde{Q}=\omega ^{-\frac {1}{2}}Q\omega ^{-\frac {1}{2}}.$
\end{theorem}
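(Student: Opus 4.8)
The plan is to combine the three earlier results—Proposition \ref{Pro8} (the bilinear expansion of $\mathcal{F}(R,M)$), Corollary \ref{Cor3} (the bound on $\mathcal{F}(R,P)$), Corollary \ref{Cor4} (the bound on $\mathcal{F}(R,Q)$), and Proposition \ref{Pro15} (the bound on $|\mathcal{L}(R,P,Q)|$)—with a single application of the weighted Cauchy--Schwarz (Young) inequality to absorb the cross term. First I would write, using \eqref{26},
\begin{equation*}
\mathcal{F}(R,M)=\mathcal{F}(R,P)+\mathcal{F}(R,Q)+2\mathcal{L}(R,P,Q)\leq \mathcal{F}(R,P)+\mathcal{F}(R,Q)+2|\mathcal{L}(R,P,Q)|.
\end{equation*}
Then I would substitute \eqref{42}, \eqref{50} and \eqref{51} to get
\begin{equation*}
\mathcal{F}(R,M)\leq -\frac{1-\delta^2}{(1+\delta^2)^2}\big|\tilde{P}\big|^2+\big|\tilde{Q}\big|^2+\frac{4n\delta}{1+\delta^2}\big|\tilde{P}\big|\,\big|\tilde{Q}\big|.
\end{equation*}

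Next I would split off a fraction $\eta$ of the (negative) $\big|\tilde{P}\big|^2$ term to dominate the cross term. The elementary inequality $ab\le \tfrac{\varepsilon}{2}a^2+\tfrac{1}{2\varepsilon}b^2$, applied with $a=\big|\tilde{P}\big|$, $b=\big|\tilde{Q}\big|$ and a suitable $\varepsilon$, gives
\begin{equation*}
\frac{4n\delta}{1+\delta^2}\big|\tilde{P}\big|\,\big|\tilde{Q}\big|\leq \eta\,\frac{1-\delta^2}{(1+\delta^2)^2}\big|\tilde{P}\big|^2+\frac{1}{\eta}\cdot\frac{(1+\delta^2)^2}{1-\delta^2}\cdot\frac{4n^2\delta^2}{(1+\delta^2)^2}\big|\tilde{Q}\big|^2,
\end{equation*}
where the choice of $\varepsilon$ is dictated by requiring the coefficient of $\big|\tilde{P}\big|^2$ on the right to be exactly $\eta\,(1-\delta^2)/(1+\delta^2)^2$; a quick computation shows the coefficient of $\big|\tilde{Q}\big|^2$ then equals $\dfrac{4n^2\delta^2}{\eta(1-\delta^2)}$. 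Adding this back to the previous display and collecting the $\big|\tilde{P}\big|^2$ and $\big|\tilde{Q}\big|^2$ contributions yields exactly \eqref{53}.

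There is essentially no obstacle here: the substantive work was already done in the preceding propositions and corollaries, so this theorem is just bookkeeping plus one Young-type inequality. The only point requiring a little care is getting the constant $\varepsilon$ right so that the $\big|\tilde{P}\big|^2$ coefficient lands precisely at $\eta\,(1-\delta^2)/(1+\delta^2)^2$, leaving the claimed $1+\dfrac{4n^2\delta^2}{\eta(1-\delta^2)}$ in front of $\big|\tilde{Q}\big|^2$ (the extra $+1$ coming from the $\big|\tilde{Q}\big|^2$ term already present from \eqref{50}). I would also remark that since $\eta\in(0,1]$ and $\delta\in[0,1)$, all denominators are strictly positive, so the bound is meaningful; when $\delta=0$ it collapses to the classical concavity estimate $\mathcal{F}(R,M)\le -|\tilde{P}|^2+|\tilde{Q}|^2$ with $P=M$, $Q=0$.
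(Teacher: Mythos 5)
Your proposal is correct and follows essentially the same route as the paper: decompose via Proposition \ref{Pro8}, bound the three pieces by Corollary \ref{Cor3}, Corollary \ref{Cor4} and Proposition \ref{Pro15}, then absorb the cross term with a weighted Cauchy (Young) inequality calibrated so the $\big|\tilde{P}\big|^2$ coefficient is exactly $\eta\,(1-\delta^2)/(1+\delta^2)^2$, which yields \eqref{53}. The only nitpick is the closing remark: when $\delta=0$ one has $\beta=0$ but $M$ may still contain a nonzero skew part $Q$, so the estimate reduces to $\mathcal{F}(R,M)\le -(1-\eta)\big|\tilde{P}\big|^2+\big|\tilde{Q}\big|^2$ rather than forcing $Q=0$; this does not affect the proof.
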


\begin{proof}
From \eqref{26}, \eqref{42}, \eqref{50} and \eqref{51}, we have
\begin{equation*}
\begin{split}
\mathcal{F}(R,M)&=  \mathcal{F}(R,P) + \mathcal{F}(R,Q) + 2 \mathcal{L}(R,P,Q)\\
& \leq - \frac{1-\delta^2 } {\left ( 1+\delta ^2 \right )^2 }\big | \tilde{P}\big | ^2 +\big | \tilde{Q}\big |^2 +\frac{4n \delta}{1+\delta^2} \big |\tilde{P}\big | \big |\tilde{Q}\big |.
\end{split}
\end{equation*}
By using Cauchy's inequality, we have for any positive constant $\eta \in (0,1],$
\begin{equation*}
\frac{4n \delta}{1+\delta^2} \big |\tilde{P}\big | \big |\tilde{Q}\big |  \leq  \frac{\eta \left (1-\delta ^2 \right ) }{ \left (1+\delta ^2 \right ) ^2 } \big |\tilde{P}\big | ^2 + \frac{ 4 n^2 \delta ^2}{ \eta \left (1-\delta ^2 \right )} \big |\tilde{Q} \big |^2.   
\end{equation*}
Combining these estimates, we obtain the estimate \eqref{53}. The proof is completed.
\end{proof}

\subsection{The $d-$concavity of the function $F(R)$}

\begin{theorem}\label{Theo2}
For any matrices $R^{(0)} = \omega^{(0)} + \beta^{(0)} = \left [ R^{(0)}_{ij}\right ], R^{(1)} = \omega^{(1)} + \beta^{(1)} = \left [ R^{(1)}_{ij}\right ]$ from the set $D_{\delta,\mu},$ we have 
\begin{equation} \label{54}
\begin{split}
F \bigl ( R^{(1)}\bigl ) - F \bigl ( R^{(0)} \bigl )  &\leq  \sum _{i,j=1}^n\frac {\partial F \bigl (R^{(0)} \bigl )}{\partial R_{ij}}\Bigl (R^{(1)}_{ij} - R^{(0)}_{ij}\Bigl ) \\
&+\frac{1}{2} \left (1 + \frac{ 4 n^2 \delta ^2}{ 1-\delta ^2 } \right ) \big (\lambda_{\min}\big ( \omega^{(s)} \big ) \big )^{-2} \bigl |\beta^{(1)}- \beta^{(0)}\bigl |^2,
\end{split}
\end{equation}
where $\omega^{(s)} \equiv (1-s) \omega^{(0)} + s\omega^{(1)}$ for some constant $s \in (0,1)$. 
\end{theorem}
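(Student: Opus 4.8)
The strategy is a second-order Taylor expansion of $F$ along the segment joining $R^{(0)}$ to $R^{(1)}$. Since $D_{\delta,\mu}$ is convex, the matrix $R^{(t)} \equiv (1-t)R^{(0)} + tR^{(1)}$ lies in $D_{\delta,\mu}$ for every $t \in [0,1]$; its symmetric part is $\omega^{(t)} \equiv (1-t)\omega^{(0)} + t\omega^{(1)}$ and its skew-symmetric part is $(1-t)\beta^{(0)} + t\beta^{(1)}$. By Proposition \ref{Pro3}, $\det R^{(t)} > 0$, so $F$ is defined and $C^\infty$ in a neighbourhood of the whole segment. Put $g(t) \equiv F(R^{(t)})$, $M \equiv R^{(1)} - R^{(0)}$, and decompose $M = P + Q$ with $P \equiv \omega^{(1)} - \omega^{(0)}$ symmetric and $Q \equiv \beta^{(1)} - \beta^{(0)}$ skew-symmetric.

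First I would compute $g'$ and $g''$. As $t \mapsto R^{(t)}$ is affine, the chain rule with \eqref{23} gives $g'(t) = \sum_{i,j} \frac{\partial F(R^{(t)})}{\partial R_{ij}} M_{ij}$, and differentiating once more and invoking \eqref{24} (there being no lower-order contribution, again because the path is affine) gives $g''(t) = \sum_{i,j,k,\ell} \frac{\partial^2 F(R^{(t)})}{\partial R_{ij}\partial R_{k\ell}} M_{ij} M_{k\ell} = \mathcal{F}(R^{(t)},M)$, the quadratic form introduced in \eqref{25}. Taylor's theorem with Lagrange remainder gives $g(1) - g(0) = g'(0) + \tfrac12 g''(s)$ for some $s \in (0,1)$; writing this out,
\[
F\bigl(R^{(1)}\bigr) - F\bigl(R^{(0)}\bigr) = \sum_{i,j} \frac{\partial F(R^{(0)})}{\partial R_{ij}}\bigl(R^{(1)}_{ij} - R^{(0)}_{ij}\bigr) + \tfrac12\, \mathcal{F}\bigl(R^{(s)}, M\bigr).
\]

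It remains to estimate $\mathcal{F}(R^{(s)},M)$ from above. Since $R^{(s)} \in D_{\delta,\mu}$ and $M = P + Q$ is its symmetric/skew-symmetric splitting, Theorem \ref{Theo1} applies; choosing the free parameter $\eta = 1$ annihilates the coefficient $(1-\eta)$ of the symmetric-part term and yields
\[
\mathcal{F}\bigl(R^{(s)}, M\bigr) \leq \left(1 + \frac{4 n^2 \delta^2}{1-\delta^2}\right)\bigl|\tilde{Q}\bigr|^2, \qquad \tilde{Q} = \bigl(\omega^{(s)}\bigr)^{-\frac12} Q\, \bigl(\omega^{(s)}\bigr)^{-\frac12}.
\]
Then Proposition \ref{Pro12}, applied to the matrix $Q$ with $\omega = \omega^{(s)}$, gives $|\tilde{Q}|^2 \leq \bigl(\lambda_{\min}(\omega^{(s)})\bigr)^{-2}|Q|^2 = \bigl(\lambda_{\min}(\omega^{(s)})\bigr)^{-2}\,|\beta^{(1)} - \beta^{(0)}|^2$. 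Substituting this into the displayed Taylor identity produces exactly \eqref{54}.

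No step is genuinely difficult here: this is a routine second-order Taylor estimate once Theorem \ref{Theo1} is in hand. The points that require attention are that convexity of $D_{\delta,\mu}$ keeps the whole segment --- in particular $R^{(s)}$ --- inside $D_{\delta,\mu}$, so that Theorem \ref{Theo1} and Proposition \ref{Pro12} may be invoked there; that the affineness of $t \mapsto R^{(t)}$ makes $g''(t)$ equal to the Hessian form $\mathcal{F}(R^{(t)},M)$ with no first-order term; and that the admissible endpoint choice $\eta = 1$ of the interval $(0,1]$ in Theorem \ref{Theo1} is precisely what eliminates the dependence on $P = \omega^{(1)} - \omega^{(0)}$, leaving only the $\beta$-difference in the remainder.
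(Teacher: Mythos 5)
Your proof is correct and follows essentially the same route as the paper: convexity of $D_{\delta,\mu}$ to keep $R^{(t)}$ in the set, a second-order Taylor expansion with Lagrange remainder giving $g''(s)=\mathcal{F}\bigl(R^{(s)},R^{(1)}-R^{(0)}\bigr)$, then Theorem \ref{Theo1} with $\eta=1$ followed by Proposition \ref{Pro12} to bound the remainder by the $\beta$-difference term. No gaps; this matches the paper's argument step for step.
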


\begin{proof}
We set for all $t \in [0,1],$
\begin{equation*}
    g(t) := F \bigl ( (1-t) R^{(0)} + tR^{(1)}\bigl )=F \bigl (R^{(t)}\bigl ),
\end{equation*}
where $R^{(t)}\equiv (1-t) R^{(0)} + tR^{(1)}=\omega^{(t)}+\beta^{(t)},$ $\omega^{(t)}= (1-t) \omega^{(0)} +t \omega^{(1)},$ $\beta^{(t)}= (1-t) \beta^{(0)}+ t \beta^{(1)}.$  Since $D_{\delta, \mu}$ is convex, we infer that $R^{(t)}\in D_{\delta, \mu}.$

By the Taylor expansion, we have for some constant $s \in (0,1),$
\begin{equation}\label{55}
F \big ( R^{(1)}\big ) - F \big ( R^{(0)}\big ) = g(1) - g(0) = g'(0) + \frac {1}{2}g''(s). 
\end{equation}

\noindent By computation, we have for all $t \in (0,1),$
\begin{equation*}
\begin{split}
g'(t) &= \sum_{i,j} \frac {\partial F \bigl ( R^{(t)} \bigl )}{\partial R _{ij}} \Bigl ( R^{(1)}_{ij}- R^{(0)}_{ij}\Bigl ),\\
    g''(t) &= \sum_{i,j,k,\ell} \frac{\partial^2 F \bigl ( R^{(t)}\bigl )}{\partial R _{ij} \partial R _{k \ell}} \left ( R^{(1)}_{ij} - R^{(0)}_{ij}\right ) \left ( R^{(1)}_{k\ell} - R^{(0)}_{k\ell}\right ) = \mathcal {F}\bigl ( R^{(t)}, R^{(1)} - R^{(0)} \bigl ),
\end{split}
\end{equation*}
where the function $\mathcal{F}$ is defined by \eqref{25}. Hence
\begin{equation}\label{56}
g'(0) = \sum_{i,j} \frac {\partial F\bigl ( R^{(0)} \bigl )}{\partial R _{ij}} \Bigl ( R^{(1)}_{ij}- R^{(0)}_{ij}\Bigl ).
\end{equation}

\noindent Moreover, by applying Theorem \ref{Theo1} with $R=R^{(s)}=\omega^{(s)}+\beta^{(s)}$, $M = R^{(1)} - R^{(0)} = \bigl ( \omega^{(1)} -\omega^{(0)} \bigl )+ \bigl ( \beta^{(1)}  - \beta^{(0)} \bigl ) \equiv P + Q $ and $\eta=1$, we obtain 
\begin{equation*} 
\begin{split}
g''(s) =\mathcal {F}\bigl (R^{(s)}, R^{(1)} - R^{(0)} \bigl ) &\leq \left (1 + \frac{ 4 n^2 \delta ^2}{ 1-\delta ^2 } \right )\Bigl | {\big (\omega^{(s)}\big )} ^{-\frac{1}{2}} \bigl ( \beta^{(1)}- \beta^{(0)}\bigl )  {\big (\omega^{(s)}\big )}^{-\frac{1}{2}} \Bigl |^2 \\
&\leq \left (1 + \frac{ 4 n^2 \delta ^2}{ 1-\delta ^2 } \right ) \big (\lambda_{\min}\big ( \omega^{(s)} \big ) \big )^{-2} \bigl |\beta^{(1)}- \beta^{(0)}\bigl |^2,
\end{split}
\end{equation*}
where the last inequality is by Proposition \ref{Pro12}. Combining this estimate with \eqref{55} and \eqref{56}, we arrive at the estimate \eqref{54}.
\end{proof}

Now, we obtain the following theorem on $d$-concavity in the set $D_{\delta,\mu}$ for the Monge-Amp\`{e}re type function $F(R)$.
\begin{theorem}\label{Theo3}
The function $F(R) = \log (\det R)$ is $d$-concave in the set $D_{\delta,\mu},$ where $d=2n\delta ^2 \left (1 + \dfrac{ 4 n^2 \delta ^2}{ 1-\delta ^2 } \right )$, depending only on $\delta$ and $n$. That means, for any matrices $R^{(0)} = \omega^{(0)} + \beta^{(0)} = \left [ R^{(0)}_{ij}\right ], R^{(1)} = \omega^{(1)} + \beta^{(1)} = \left [ R^{(1)}_{ij}\right ]$ from $D_{\delta,\mu},$ we have
\begin{equation}\label{57}
 F\big (R^{(1)}\big ) - F\big ( R^{(0)}\big )  \leq  \sum _{i,j=1}^n\frac {\partial F \big ( R^{(0)}\big )}{\partial R_{ij}} \left ( R^{(1)}_{ij} - R^{(0)}_{ij}\right )  
  + d.
\end{equation}
\end{theorem}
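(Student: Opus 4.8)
The plan is to obtain \eqref{57} directly from Theorem \ref{Theo2} by bounding the remainder term appearing in \eqref{54}. Fix $R^{(0)} = \omega^{(0)} + \beta^{(0)}$ and $R^{(1)} = \omega^{(1)} + \beta^{(1)}$ in $D_{\delta,\mu}$, and let $\omega^{(s)}$ and $s \in (0,1)$ be as in Theorem \ref{Theo2}. Comparing the right-hand sides of \eqref{54} and \eqref{57}, it suffices to establish
\[
\frac{1}{2}\left(1 + \frac{4n^2\delta^2}{1-\delta^2}\right)\bigl(\lambda_{\min}(\omega^{(s)})\bigr)^{-2}\bigl|\beta^{(1)} - \beta^{(0)}\bigr|^2 \;\leq\; d \;=\; 2n\delta^2\left(1 + \frac{4n^2\delta^2}{1-\delta^2}\right),
\]
that is, to prove the single inequality $\bigl(\lambda_{\min}(\omega^{(s)})\bigr)^{-2}\,|\beta^{(1)} - \beta^{(0)}|^2 \leq 4n\delta^2$.

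To prove this I would combine two elementary estimates. First, as already observed in the proof of Theorem \ref{Theo2}, the matrix $R^{(s)} = (1-s)R^{(0)} + sR^{(1)}$ lies in the convex set $D_{\delta,\mu}$, so by \eqref{5} its symmetric part $\omega^{(s)}$ satisfies $\delta\,\lambda_{\min}(\omega^{(s)}) \geq \mu$; when $\mu > 0$ this gives $\bigl(\lambda_{\min}(\omega^{(s)})\bigr)^{-2} \leq \delta^2/\mu^2$. Second, since $R^{(0)}, R^{(1)} \in D_{\delta,\mu}$ we have $\|\beta^{(0)}\| \leq \mu$ and $\|\beta^{(1)}\| \leq \mu$, hence $\|\beta^{(1)} - \beta^{(0)}\| \leq 2\mu$; using the elementary comparison $|K| \leq \sqrt{n}\,\|K\|$ between the Frobenius and operator norms on $\mathbb{R}^{n\times n}$, we get $|\beta^{(1)} - \beta^{(0)}|^2 \leq n\,\|\beta^{(1)} - \beta^{(0)}\|^2 \leq 4n\mu^2$. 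Multiplying the two bounds yields $\bigl(\lambda_{\min}(\omega^{(s)})\bigr)^{-2}\,|\beta^{(1)} - \beta^{(0)}|^2 \leq (\delta^2/\mu^2)\cdot 4n\mu^2 = 4n\delta^2$, which is exactly what was needed.

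Finally, I would dispose of the degenerate case $\delta = 0$ separately: then $\mu = 0$, so $\beta^{(0)} = \beta^{(1)} = 0$, the remainder term in \eqref{54} vanishes, $d = 0$, and \eqref{57} reduces to the classical concavity of $\log\det$ on symmetric positive definite matrices. There is no genuinely hard step in this argument — it is simply Theorem \ref{Theo2} fed through the two defining constraints of $D_{\delta,\mu}$ together with $|\cdot| \leq \sqrt{n}\,\|\cdot\|$; the only point requiring a little care is tracking the cancellation of $\mu^2$ and treating $\mu = 0$ apart so as not to divide by zero, consistent with the convention $\tfrac{0}{0}=0$.
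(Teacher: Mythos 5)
Your proposal is correct and follows essentially the same route as the paper: it feeds Theorem \ref{Theo2} through the two defining constraints of $D_{\delta,\mu}$, namely $\big|\beta^{(1)}-\beta^{(0)}\big|^2 \leq n\big\|\beta^{(1)}-\beta^{(0)}\big\|^2 \leq 4n\mu^2$ and $\delta\,\lambda_{\min}\big(\omega^{(s)}\big) \geq \mu$, to absorb the remainder term into the constant $d$. Your extra care with the degenerate case $\mu=0$ (where $\beta^{(0)}=\beta^{(1)}=0$ and the remainder vanishes) is a harmless refinement of the same argument.
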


\begin{proof}
By the assumptions and the definition of $D_{\delta, \mu}$ in \eqref{5}, we have
$$ \big |\beta^{(1)}- \beta^{(0)}\big |^2 \leq n\, \big \|\beta^{(1)}- \beta^{(0)}\big \|^2 \leq 2n  \left ( \big \| \beta^{(0)}\big \|^2 +  \big \| \beta^{(1)}\big \|^2 \right ) \leq 4 n\mu ^2, $$
and
$$ \delta \lambda_{\min}\big ((1-s) \omega^{(0)} +s \omega^{(1)}\big ) \geq \mu,\ \forall s \in [0,1].$$
From these estimates and \eqref{54}, we can easily obtain the desired estimate \eqref{57}.
\end{proof}

\section{Comparison principle for nonsymmetric Monge-Amp\`{e}re type equations }

In this section, we shall establish the comparison principle for the Monge-Amp\`{e}re type equation \eqref{1} in the case that it is $\delta-$elliptic, $0 \leq \delta <1$ with respect to compared functions. Consider the following operator associated to the equation \eqref{1},
\begin{equation*}
G[u](x)\equiv \log \det \left [ D^2u -A(x,u,Du)-B(x,u,Du) \right ]-\log f(x,u,Du  ), x \in \Omega.
\end{equation*}

\begin{theorem}\label{Theo4}
Let $u(x),v(x) \in  C^2(\overline {\Omega})$ satisfying $G[u](x) \leq G[v](x)$ in $\Omega$, $u \geq v$ on $\partial \Omega,$ where $A, B, f$ are in $ C^1 (\overline{\Omega} \times \mathbb{R} \times \mathbb{R}^n)$ and $f >0$ on $\overline{\Omega} \times \mathbb{R} \times \mathbb{R}^n.$ Suppose that the following conditions are satisfied for some nonnegative constants $\delta, \alpha_1, \beta_1$, $0 \leq \delta <1$ and for all $x \in \overline{\Omega}$, $z \in \mathbb{R}$, $p\in \mathbb{R}^n,$\\
\indent {\rm (i)} $ \lambda_{\min} (\omega(x,u)) >0 , \  \lambda_{\min} (\omega(x,v)) >0 ;$\\
\indent {\rm (ii)} $ \delta   \min \{\lambda_{\min} (\omega(x,u)),  \lambda_{\min} (\omega(x,v)) \}  \geq  \mu (B) ;$\\
\indent {\rm (iii)} $\lambda_{\min}(D_z A(x,z,p) ) \geq (-\alpha_1)  \min \{  \lambda_{\min} (\omega(x,u)),  \lambda_{\min} (\omega(x,v)) \}$;\\
\indent {\rm (iv)} $ \beta_1 \min\{  \lambda_{\min} (\omega(x,u)) ,  \lambda_{\min} (\omega(x,v)) \} \geq \mu  (D_z B)$;\\
\indent {\rm (v)} $\underset{\overline{\Omega} \times \mathbb{R} \times \mathbb{R}^n}{\inf}\,\left ( \frac {D_z f }{f} \right ) \geq  n \left ( \alpha_1 + \dfrac {\delta}{1+\delta^2}\beta_1 \right ),$ 

where the quantities $\mu (B), \mu (D_z B)$ are defined as in \eqref{6}. 

Then we have that either $u>v$ or $u\equiv v$ in $\Omega.$
\end{theorem}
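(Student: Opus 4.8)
The plan is to reduce the statement to the classical weak and strong maximum principles by writing the difference $w:=u-v$ as a solution of a linear differential inequality $\mathcal L w\le 0$, where $\mathcal L=\sum_{i,j}a^{ij}D_{ij}+\sum_k b^kD_k+c$ is uniformly elliptic with $c\le 0$. First I would observe that, setting $R[u]:=D^2u-A(x,u,Du)-B(x,u,Du)=\omega(x,u)-B(x,u,Du)$ and likewise $R[v]$, hypotheses (i)--(ii) say precisely that $R[u]$ and $R[v]$ belong to $D_{\delta,\mu(B)}$ at every point of $\overline\Omega$. Since $D_{\delta,\mu(B)}$ is convex, the segment $R_t:=(1-t)R[v]+tR[u]=\omega_t+\beta_t$, with $\omega_t:=(1-t)\omega(x,v)+t\omega(x,u)$, lies in $D_{\delta,\mu(B)}$ for every $t\in[0,1]$, and concavity of $\lambda_{\min}$ gives $\lambda_{\min}(\omega_t)\ge\lambda_0(x):=\min\{\lambda_{\min}(\omega(x,u)),\lambda_{\min}(\omega(x,v))\}>0$ for all $t$.

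Next I would linearise $G[u]-G[v]$. Using \eqref{23} and the fundamental theorem of calculus along $t\mapsto R_t$ one has
\[
F(R[u])-F(R[v])=\sum_{i,j}\bar F^{ij}\bigl(R[u]-R[v]\bigr)_{ij},\qquad \bar F^{ij}:=\int_0^1\bigl(R_t^{-1}\bigr)^{ji}\,\dif t,
\]
while expanding $A(x,u,Du)-A(x,v,Dv)$, $B(x,u,Du)-B(x,v,Dv)$ and $\log f(x,u,Du)-\log f(x,v,Dv)$ by the fundamental theorem of calculus along $s\mapsto(x,v+sw,Dv+sDw)$ (here the hypothesis $A,B,f\in C^1$ is used) leads to
\[
G[u]-G[v]=\sum_{i,j}a^{ij}D_{ij}w+\sum_k b^kD_kw+cw=:\mathcal L w,
\]
with $a^{ij}=\bar F^{ij}$ and $c=-\sum_{i,j}\bar F^{ij}\bigl(\overline{D_zA}+\overline{D_zB}\bigr)_{ij}-\overline{D_zf/f}$, where the bar denotes the average $\int_0^1(\,\cdot\,)(x,v+sw,Dv+sDw)\,\dif s$, and $b^k$ is the analogous coefficient built from the $D_{p_k}$-derivatives. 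By \eqref{17}, $\tfrac12\bigl(R_t^{-1}+(R_t^{-1})^T\bigr)=\omega_t^{-1/2}(E-\sigma_t^2)^{-1}\omega_t^{-1/2}$ is positive definite since $\|\sigma_t\|\le\delta<1$ by Proposition \ref{Pro4}; hence the symmetric part of $[a^{ij}]$, namely $\int_0^1\tfrac12\bigl(R_t^{-1}+(R_t^{-1})^T\bigr)\,\dif t$, is positive definite, and, all coefficients being continuous on the compact set $\overline\Omega$, the operator $\mathcal L$ is uniformly elliptic with bounded lower-order coefficients.

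The key step is to check $c\le 0$. For the symmetric part I would use (iii), $\overline{D_zA}\ge-\alpha_1\lambda_0 E$, the positivity of $\tfrac12(R_t^{-1}+(R_t^{-1})^T)$, and the bound $\Tr\bigl(\tfrac12(R_t^{-1}+(R_t^{-1})^T)\bigr)=\Tr R_t^{-1}\le\Tr\omega_t^{-1}\le n/\lambda_{\min}(\omega_t)\le n/\lambda_0$, which comes from Corollary \ref{Cor2}, to obtain $-\sum_{i,j}\bar F^{ij}(\overline{D_zA})_{ij}\le n\alpha_1$. For the skew part, \eqref{17} and Proposition \ref{Pro4} (using that $x\mapsto x/(1+x^2)$ is increasing on $[0,1]$) give $\bigl\|\tfrac12(R_t^{-1}-(R_t^{-1})^T)\bigr\|=\bigl\|\omega_t^{-1/2}\sigma_t(E-\sigma_t^2)^{-1}\omega_t^{-1/2}\bigr\|\le\delta/\bigl((1+\delta^2)\lambda_0\bigr)$, while $\|\overline{D_zB}\|\le\mu(D_zB)\le\beta_1\lambda_0$ by (iv); since both $\tfrac12(R_t^{-1}-(R_t^{-1})^T)$ and $\overline{D_zB}$ are skew-symmetric, the elementary bound $|\Tr(KL)|\le n\|K\|\,\|L\|$ yields $\bigl|\sum_{i,j}\bar F^{ij}(\overline{D_zB})_{ij}\bigr|\le n\delta\beta_1/(1+\delta^2)$. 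Finally $\overline{D_zf/f}\ge n\bigl(\alpha_1+\tfrac{\delta}{1+\delta^2}\beta_1\bigr)$ by (v). Adding these three estimates,
\[
c\ \le\ n\alpha_1+\frac{n\delta\beta_1}{1+\delta^2}-n\Bigl(\alpha_1+\frac{\delta}{1+\delta^2}\beta_1\Bigr)\ =\ 0.
\]

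With $\mathcal L w=G[u]-G[v]\le 0$ in $\Omega$, $\mathcal L$ uniformly elliptic, $c\le 0$, and $w\ge 0$ on $\partial\Omega$, the weak maximum principle gives $w\ge 0$ in $\Omega$, i.e.\ $u\ge v$; then, $w\ge 0$ being a supersolution of $\mathcal L$ with $c\le 0$ on the connected domain $\Omega$, the strong maximum principle forces either $w>0$ throughout $\Omega$ or $w\equiv 0$, which is the assertion. I expect the main obstacle to be the verification that $c\le 0$: it is exactly here that Corollary \ref{Cor2}, the splitting formulae \eqref{17} of Proposition \ref{Pro6}, and the bound $\|\sigma_t\|\le\delta$ of Proposition \ref{Pro4} are all needed, in combination with the precise ordering of the constants in (iii)--(v); some care is also required because these bounds must be applied uniformly along the whole segment $R_t$ rather than only at $R[u]$ and $R[v]$, which is exactly what convexity of $D_{\delta,\mu}$ and concavity of $\lambda_{\min}$ provide.
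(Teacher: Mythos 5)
Your proposal is correct and follows essentially the same route as the paper: linearise $G[u]-G[v]$ along the segment joining the two matrices (which stays in $D_{\delta,\mu(B)}$ by (i)--(ii) and convexity), verify uniform ellipticity and $c\le 0$ by splitting $R_t^{-1}$ into its symmetric and skew parts and invoking Proposition \ref{Pro4}, Proposition \ref{Pro6}/Corollary \ref{Cor1}, Corollary \ref{Cor2} together with (iii)--(v), and then conclude with the weak and strong maximum principles. The only deviations are cosmetic: you use fundamental-theorem-of-calculus averages where the paper uses mean-value points $s,\tau$ (your variant even gives continuous coefficients), and you work with $w=u-v$ instead of $v-u$.
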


\begin{proof}
For all $x \in \overline{\Omega}$ and for all $t \in [0,1],$ we set
$$ w(x)=v(x)-u(x) ,$$
$$ u^{(t)}(x)=(1-t)u(x)+tv(x),$$
and
\begin{equation*}
\begin{split}
R^{(0)}(x)&=D^2u(x)-A(x,u(x),Du(x))-B(x,u(x),Du(x)), \\
R^{(1)}(x)&=D^2v(x)-A(x,v(x),Dv(x))-B(x,v(x),Dv(x)), \\
R^{(t )}(x)&= (1-t )R^{(0)}(x) + tR^{(1)}(x), \\
 \omega^{(0)}(x)&=D^2u(x)-A(x,u(x),Du(x)), \\
 \omega^{(1)}(x)&=D^2v(x)-A(x,v(x),Dv(x)), \\
\omega^{(t )}(x)&= (1-t )\omega^{(0)}(x)+t \omega^{(1)}(x).
\end{split}
\end{equation*}
Set
 $$g(t,x) \equiv \log \det \big ((1-t)R^{(0)}(x)+tR^{(1)}(x) \big ) =\log \det \big (R^{(t)}(x) \big ) .$$
Then by the mean value Theorem and \eqref{23}, we have 
\begin{equation}\label{58}
\begin{split}
\log \det  \big ( R^{(1)}(x) \big ) &- \log \det \big (R^{(0)}(x) \big ) =g(1,x)-g(0,x)\\
&=g_t'(s,x)=\sum_{i,j=1}^n \big (R^{(s)}(x) \big )^{-1}_{ji}\big (R^{(1)}_{ij}(x) - R^{(0)}_{ij}(x) \big ),
\end{split}
\end{equation}
where $s \in (0,1)$ is the constant depending on $x.$

\noindent Set
\begin{equation*}
\begin{split}
h(t,x)= \sum_{i,j=1}^n &\big (R^{(s)}(x) \big )^{-1}_{ji} \bigl [D_{ij}u^{(t)}(x) - A_{ij}\big (x,u^{(t)}(x), Du^{(t)}(x) \big ) \\
&- B_{ij}\big (x,u^{(t)}(x), Du^{(t)}(x) \big ) \bigl ] -\log f \big (x,u^{(t)}(x), Du^{(t)}(x) \big ).
\end{split}
\end{equation*}
Then by the mean value Theorem and \eqref{58}, we obtain
\begin{equation*}
\begin{split}
&G[v](x)-G[u](x)=h(1,x)-h(0,x)=h_t'(\tau ,x)\\
 &=\sum_{i,j=1}^n \big (R^{(s)}(x) \big )^{-1}_{ji} \biggl [ ( D_{ij}v(x)-D_{ij}u(x)  )\\
&-\sum_{k=1}^n   ( D_{p_k}A_{ij} +D_{p_k} B_{ij}) \big (x,u^{(\tau)}(x), Du^{(\tau)}(x) \big ) (D_k v(x)-D_k u(x))\\
&- (D_z A_{ij}+D_z B_{ij})\big (x, u^{(\tau)}(x), Du^{(\tau)}(x)\big ) (v(x)-u(x))  \biggl ]\\
&-\frac{1}{f \big (x, u^{(\tau)}(x), Du^{(\tau)}(x) \big )} \sum_{k=1}^n D_{p_k} f \big (x,u^{(\tau)}(x), Du^{(\tau)}(x) \big )  (D_k v(x)-D_k u(x))\\
&-\frac {1}{f \big (x,u^{(\tau)}(x) , Du^{(\tau)}(x) \big )}D_z f \big (x,u^{(\tau)}(x), Du^{(\tau)}(x) \big ) (v(x)-u(x)),
\end{split}
\end{equation*}
where $\tau \in (0,1)$ is the constant depending on $x$ and $s.$

\noindent Consequently,
\begin{equation} \label{59}
 G[v](x)-G[u](x)=a^{ij}(x)D_{ij} w(x)+b^k(x)D_kw(x) +c(x)w(x) ,
\end{equation}
where
\begin{equation}\label{60}
\begin{split}
 a^{ij}(x)&= \frac {\big (R^{(s)}(x) \big )^{-1}_{ij} + \big ( R^{(s)}(x) \big )^{-1}_{ji}}{2} ,\\
 b^k(x)&=-\sum_{i,j=1}^n \big (R^{(s)}(x) \big )^{-1}_{ji} \big [ (D_{p_k} A_{ij} + D_{p_k} B_{ij}) \big (x, u^{(\tau)}(x), Du^{(\tau)}(x) \big ) \big ] \\
& -\frac {1}{f \big ( x,u^{(\tau)}(x) , Du^{(\tau)}(x) \big )} D_{p_k} f \big (x,u^{(\tau)}(x), Du^{(\tau)}(x) \big ),\\
c(x)&=-\sum_{i,j=1}^n \big ( R^{(s)}(x) \big )^{-1}_{ji} \big [ (D_z A_{ij}+D_z B_{ij}) \big (x, u^{(\tau)}(x), Du^{(\tau)}(x) \big )  \big ]\\
&-\frac {1}{f \big (x, u^{(\tau)}(x), Du^{(\tau)}(x) \big )} D_z f \big ( x, u^{(\tau)}(x), Du^{(\tau)}(x) \big ).
\end{split}
\end{equation}
Consider the second order linear partial differential operator $L$ given by
\begin{equation}\label{61}
L:= a^{ij}(x)D_{ij} +b^k(x)D_k +c(x), 
\end{equation}
where the coefficients $a^{ij}, b^k, c$ are defined by \eqref{60}. We have the following claims.

\noindent {\bf Claim 1.} The operator $L$ is uniformly elliptic; that is, there exists positive constants $\lambda, \Lambda$ such that
\begin{equation}\label{62}
\lambda |\xi|^2 \leq a^{ij}(x) \xi_i \xi_i \leq \Lambda |\xi|^2, \ \forall x \in \overline{\Omega}, \forall \xi \in \mathbb{R}^n.
\end{equation}

Indeed, it follows from conditions (i), (ii) that $R^{(0)}(x), R^{(1)}(x)$ are in the set $D_{\delta, \mu(B)}$, so is $R^{(s)}(x)$. Also from (i) and our regularity assumptions for $A,B$ and $u$, we infer that there exists positive constants $\lambda_0, \Lambda_0$ such that
\begin{equation*}
\lambda_0 E \leq \omega (x,u) \leq \Lambda_0 E, \ \  \lambda_0 E \leq \omega (x,v)  \leq \Lambda_0 E,\ \forall x \in \overline{\Omega},
\end{equation*}
where $E$ is the unit matrix of order $n$. It follows that
\begin{equation}\label{63}
   \lambda_0 E \leq \omega^{(s)}(x)   \leq \Lambda_0 E, \ \forall x \in \overline{\Omega}.
\end{equation}
Therefore
$$  \frac{1}{\Lambda_0} E \leq \big (\omega^{(s)}(x) \big )^{-1}  \leq \frac{1}{\lambda_0} E, \ \forall x \in \overline{\Omega}.$$

\noindent Moreover, by Proposition \ref{Pro4} and Corollary \ref{Cor1}, one can easily show that
\begin{equation*}
 \frac{1}{1+\delta^2} \Bigl ( \big (\omega^{(s)}(x)\big )^{-1}    \xi, \xi \Bigl ) \leq  \left ( H(x) \xi, \xi \right )    \leq  \Bigl ( \big (\omega^{(s)}(x)\big )^{-1} \xi, \xi \Bigl ) , \ \forall x \in \overline{\Omega},
\end{equation*}
where
$$ H(x):=\frac {\big (R^{(s)}(x) \big )^{-1}+ \Bigl ( \big ( R^{(s)}(x)\big )^{-1}\Bigl )^T}{2}.  $$
Then \eqref{62} follows from the above estimates by taking $\lambda=\dfrac{1}{(1+\delta^2) \Lambda_0}$ and $\Lambda= \dfrac{1}{\lambda_0}.$

\noindent {\bf Claim 2.} The coefficients $b^k(x), c(x)$ are bounded in $\overline{\Omega}$.

This claim easily follows from the fact that the set $ \big \{ \big (x, u^{(\tau)}(x), Du^{(\tau)}(x) \big ) \big \}$ is bounded in $\overline{\Omega } \times \mathbb R \times \mathbb {R}^n$ and 
$$ \det R^{(s)}(x) \geq \det \omega^{(s)}(x) \geq \lambda_0^n>0, \ \forall x \in \overline{\Omega}, $$
which holds by Proposition \ref{Pro3} and \eqref{63}.

\noindent {\bf Claim 3.} The coefficient $c(x) \leq 0$ for all $ x\in \overline {\Omega}$.

{\bf Claim 3} follows from \eqref{60}, condition (v) and the two following inequalities
\begin{equation}\label{64}
 -\sum_{i,j=1}^n \big ( R^{(s)}(x) \big )^{-1}_{ji} D_z A_{ij}\big ( x, u^{(\tau)}(x), Du^{(\tau)}(x) \big ) \leq  n \alpha_1, \ \forall x \in \overline{\Omega},
\end{equation}
\begin{equation}\label{65}
 -\sum_{i,j=1}^n \big ( R^{(s)}(x) \big )^{-1}_{ji} D_z B_{ij}\big ( x, u^{(\tau)}(x), Du^{(\tau)}(x) \big ) \leq n \frac{\delta}{1+\delta^2} \beta_1, \ \forall x \in \overline{\Omega}.
\end{equation}
So it remains to prove \eqref{64} and \eqref{65}. 

Since $D_z A$ is symmetric and $H(x)$ is positive definite, we have
\begin{equation}\label{66}
  \sum_{i,j=1}^n \big ( R^{(s)}(x) \big )^{-1}_{ji} D_z A_{ij}={\Tr} [ (D_z A) (H(x)) ] \geq \lambda_{\min} (D_z A)\, {\Tr} H(x).
\end{equation}
Given any point $x \in \overline{\Omega}.$ Assume that $\lambda_{\min} \big (D_z A \big ( x, u^{(\tau)}(x), Du^{(\tau)}(x) \big )\big ) \geq 0$ at this point. Then by \eqref{66}, the left hand side of \eqref{64} is nonpositive and thus \eqref{64} follows. Assume the contrary, that $\lambda_{\min} \big (D_z A \big ( x, u^{(\tau)}(x), Du^{(\tau)}(x) \big )\big )< 0.$ Then \eqref{64} follows from \eqref{66} and the following estimates 
$$ {\Tr}H(x) \leq {\Tr} \Bigl [ \big ( \omega^{(s) }(x)\big )^{-1}\Bigl ] \leq  \frac{n}{\lambda_{\min} \big ( \omega^{(s)}(x) \big )},  $$
\begin{equation*}
\begin{split}
(-\alpha_1 ) \lambda_{\min} (\omega^{(s)}(x))  &\leq  (-\alpha_1) \min \{ \lambda_{\min} (\omega(x,u) ) ,  \lambda_{\min} (\omega(x,v) ) \}\\
& \leq \lambda_{\min} \big (D_z A \big ( x, u^{(\tau)}(x), Du^{(\tau)}(x) \big )\big ),
\end{split}
\end{equation*}
which are inferred from Corollary \ref{Cor2} and condition (iii), respectively. Thus \eqref{64} is proved.

We now prove \eqref{65}. Set
$$ K(x):=  \frac {\big ( R^{(s)}(x)\big )^{-1}-\Bigl ( \big (R^{(s)}(x)\big )^{-1}\Bigl )^T}{2} . $$
By Proposition \ref{Pro4} and Corollary \ref{Cor1}, one can easily show that
\begin{equation*}
 \| K(x) \|  \leq \frac {\delta}{1+\delta^2}\frac {1}{\lambda_{\min}\big ( \omega^{(s)}(x) \big )}, \ \forall x \in \overline{\Omega},
\end{equation*}
and, by condition (iv),
\begin{equation*}
\mu  (D_z B) \leq \beta_1 \min \{ \lambda_{\min} (\omega(x,u) ) ,  \lambda_{\min} (\omega(x,v) ) \}\leq \beta_1 \lambda_{\min}\big ( \omega^{(s)}(x)\big ) , \ \forall x \in \overline{\Omega}.
\end{equation*}
From these estimates and the following inequality:
\begin{equation*}
{\Tr} ( MN ) \leq |M| |N| \leq n \|M \| \|N\|,\,\,\, \text{for all}\,\,\, M,N \in \mathbb{R}^{n \times n},
\end{equation*}
we obtain for all $x \in  \overline{\Omega},$
\begin{equation*}
\begin{split}
  -\sum_{i,j=1}^n \big (R^{(s)}(x) \big )^{-1}_{ji}&D_z B_{ij}(x, u^{(\tau)}(x), Du^{(\tau)}(x))\\
&={\Tr} \big [ (K(x) ) (-D_z B(x, u^{(\tau)}(x), Du^{(\tau)}(x))) \big ]\\
&\leq n \frac{\delta}{1+\delta^2} \frac{\mu (D_z B)}{\lambda_{\min}(\omega^{(s)}(x))}\leq n \frac{\delta}{1+\delta^2} \beta_1.
\end{split}
\end{equation*}
Thus \eqref{65} is proved.

To complete the proof of this theorem, we note that, if $G[u] \leq G[v] \  \text{in} \ \Omega,  u\geq v \ \text{on} \ \partial \Omega$ then, by \eqref{59} and \eqref{61}, $Lw \geq 0 \ \text{in}\ \Omega , w \leq 0 \    \text{on} \ \partial \Omega.$ By {\bf Claims 1, 2, 3}, we can apply the strong maximum principle of E. Hopf (Theorem 3.5, \cite{2}) to obtain the conclusion of Theorem \ref{Theo4}.
\end{proof}

\begin{corollary} 
Under the assumptions of Theorem \ref{Theo4}, where $G[u]<G[v]$ in $\Omega$, $u = v$ on $\partial \Omega$, $\partial \Omega \in C^2,$ we have the following strict inequalities
\begin{align*}
u&>v, \ \text{in} \ \Omega,\\
\frac{\partial u}{\partial \nu} &> \frac{\partial v}{\partial \nu},\ \text{on} \ \partial \Omega,
\end{align*}
where $\nu$ is the unit inner normal to $\partial \Omega.$
\end{corollary}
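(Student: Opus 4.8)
The plan is to read off both strict inequalities from Theorem \ref{Theo4} together with the machinery already assembled in its proof; no new estimates are required. First I would observe that all hypotheses of Theorem \ref{Theo4} are in force here: $G[u]<G[v]$ in $\Omega$ gives in particular $G[u]\leq G[v]$, and $u=v$ on $\partial\Omega$ gives $u\geq v$ on $\partial\Omega$, while conditions (i)--(v) are assumed. Hence Theorem \ref{Theo4} yields that either $u>v$ in $\Omega$ or $u\equiv v$ in $\Omega$. The second alternative is impossible: if $u\equiv v$ on $\Omega$, then, by continuity on $\overline{\Omega}$, $G[u]\equiv G[v]$ in $\Omega$, contradicting the strict hypothesis $G[u]<G[v]$. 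So $u>v$ in $\Omega$, which is the first assertion; equivalently, $w:=v-u$ satisfies $w<0$ in $\Omega$ and $w=0$ on $\partial\Omega$.

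For the normal derivative I would re-use the linear operator $L=a^{ij}D_{ij}+b^kD_k+c$ of \eqref{61}, with coefficients \eqref{60}, constructed in the proof of Theorem \ref{Theo4}. By Claims 1, 2 and 3 established there, $L$ is uniformly elliptic on $\overline{\Omega}$, its lower-order coefficients $b^k,c$ are bounded, and $c\leq 0$ on $\overline{\Omega}$; moreover, by \eqref{59}, $Lw=G[v]-G[u]$, which is now \emph{strictly} positive in $\Omega$. Thus $w$ is a non-constant function with $Lw\geq 0$ in $\Omega$, $c\leq 0$, attaining its maximum value $0$ at every point of $\partial\Omega$. Fix $x_0\in\partial\Omega$. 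Since $\partial\Omega\in C^2$, $\Omega$ satisfies an interior ball condition at $x_0$, and since $w(x_0)=0=\max_{\overline{\Omega}}w\geq 0$ with $w$ non-constant, Hopf's boundary point lemma (Lemma 3.4, \cite{2}) applies and gives $\partial w/\partial n\,(x_0)>0$ for the outer unit normal $n$. Because the inner unit normal is $\nu=-n$, this reads $\partial w/\partial\nu\,(x_0)<0$, i.e.\ $\partial u/\partial\nu\,(x_0)>\partial v/\partial\nu\,(x_0)$; as $x_0\in\partial\Omega$ was arbitrary, the second assertion follows.

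There is no substantial obstacle here, since uniform ellipticity, boundedness of $b^k$ and $c$, and the sign $c\leq 0$ were all already verified inside the proof of Theorem \ref{Theo4}. The only points that need a little care are: ruling out the alternative $u\equiv v$ by using the strictness of $G[u]<G[v]$; checking $w(x_0)\geq 0$ so that the version of Hopf's lemma valid for $c\leq 0$, $c\not\equiv 0$ is legitimate; and keeping track of the sign in passing from the outer normal appearing in Hopf's lemma to the inner normal $\nu$ of the statement.
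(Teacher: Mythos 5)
Your proposal is correct and follows essentially the same route as the paper: apply Theorem \ref{Theo4} to get $u>v$ (i.e.\ $w=v-u<0$) in $\Omega$, then use the operator $L$ from \eqref{61} with Claims 1--3 and Hopf's boundary point lemma (Theorem 3.4 of \cite{2}) to conclude $\partial w/\partial\nu<0$ on $\partial\Omega$. Your explicit exclusion of the alternative $u\equiv v$ via the strict inequality $G[u]<G[v]$, and your care with the inner/outer normal sign and the condition $w(x_0)=0$, merely spell out details the paper leaves implicit.
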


\begin{proof}
Set $w=v-u.$ Following the proof of Theorem \ref{Theo4}, we have
$$ Lw>0 \ \text {in}\ \Omega,\ w=0 \ \text{on} \ \partial \Omega, $$
where $L$ is the uniformly elliptic operator defined by \eqref{61}. Further, by Theorem \ref{Theo4}, we have that $w<0$ in $\Omega.$ Hence, we can apply the Hopf's lemma (Theorem 3.4, \cite{2}) to obtain
$$  \frac{\partial w}{\partial \nu} <0 , \ \text{on}\ \partial \Omega.$$
The proof is completed.
\end{proof}


\end{document}